\theoremstyle{definition}
\newtheorem{Exam}[equation]{Example}
\newtheorem{Def}[equation]{Definition}
\newtheorem{Thm}[equation]{Theorem}
\newtheorem{Lem}[equation]{Lemma}
\newtheorem{Prop}[equation]{Proposition}
\newcommand{\hobox}[3]{\draw (0+#1,0-#2) rectangle (1+#1,-1-#2)++(-0.5,+0.5) node {$ #3$};}
\newcommand{\domscale}{0.5}
\numberwithin{equation}{section}
\newcommand{\fg}{{\mathfrak g}}
\newcommand{\fh}{{\mathfrak h}}
\newcommand{\fl}{{\mathfrak l}}
\newcommand{\fq}{{\mathfrak q}}
\newcommand{\fu}{{\mathfrak u}}
\begin{document}
	
	\title[Gelfand--Kirillov dimension]{Reducibility of scalar generalized Verma modules of minimal parabolic type II}
	
	%\author{Zhanqiang Bai}
	%\address{Department of Mathematical Sciences, Soochow University, Suzhou 215006, China}
	%\email{\tt  zqbai@suda.edu.cn}

	\author{Jing Jiang* and Siying Wu}
	\address{
		School of mathematical Sciences, East China Normal University, Shanghai 200241, China} 
	\email{jjsd6514@163.com (Jiang), hnrfc980@163.com (Wu)}
	
	% \author{Siying Wu}
	% \address{
		% School of mathematical Sciences, East China Normal University, Shanghai 200241, China} 
	% \email{jjsd6514@163.com}
	
	\begin{abstract}
		Let $\mathfrak{g}$ be a exceptional complex simple Lie algebra and $\mathfrak{q}$ be a parabolic subalgebra. A generalized Verma module $M$ is called a scalar generalized Verma module if it is induced from a one-dimensional representation of $\mathfrak{q}$. In this paper, we will determine the first diagonal-reducible point of scalar generalized Verma modules associated to minimal parabolic subalgebras of exceptional Lie algebras by computing explicitly the Gelfand--Kirillov (GK) dimension of the corresponding highest weight modules.
		
	\end{abstract}
	\footnote{*Corresponding author.}
	\subjclass[2020]{16S30, 17B10, 17B20, 22E47}
	
	\keywords{Gelfand--Kirillov dimension; Minimal parabolic subalgebra; Exceptional type; PyCox}
	
	\maketitle
	
%	\tableofcontents
	
	\section{Introduction}
	
	Let $\mathfrak{g}$ be a finite-dimensional complex simple Lie
	algebra and $U(\mathfrak{g})$ be its universal enveloping algebra.
	Fix a Cartan subalgebra $\mathfrak{h}$ and denote by $\Delta$ the root system associated to $(\mathfrak{g}, \mathfrak{h})$. Choose a positive root system
	$\Delta^+\subset\Delta$ and a simple system $\Pi\subset\Delta^+$. Denote $\rho$ the half sum of positive roots. Let $\mathfrak{g}=\bar{\mathfrak{n}}\oplus\mathfrak{h}\oplus\mathfrak{n}$
	be the triangular decomposition of $\mathfrak{g}$ with nilpotent radical $\mathfrak{n}$ and its opposite nilradical $\bar{\mathfrak{n}}$.   Choose a subset $I\subset\Pi$ , which generates a subsystem
	$\Delta_I\subset\Delta$.
	Let $\mathfrak{q}_I$ be the standard parabolic subalgebra corresponding to $I$ with Levi decomposition $\mathfrak{q}_I=\mathfrak{l}_I\oplus\mathfrak{u}_I$. So when $I=\emptyset$, we have $\mathfrak{q}_{\emptyset}=\mathfrak{h}\oplus
	\mathfrak{n}=\mathfrak{b}$.
	
	Let $\mathfrak{q}_I=\mathfrak{l}_I\oplus\mathfrak{u}_I$ and $F_{\lambda}$ be a finite-dimensional irreducible $\mathfrak{l}_I$-module with the highest weight $\lambda\in\mathfrak{h}^*$. It can also be viewed as a
	$\mathfrak{q}_I$-module with trivial $\mathfrak{u}_I$-action. The {\it generalized Verma module} $M_I(\lambda)$ is defined by
	\[
	M_I(\lambda):=U(\mathfrak{g})\otimes_{U(\mathfrak{q})}F_{\lambda}.
	\]
	The irreducible quotient of $M_I(\lambda)$ is denoted by $L(\lambda)$, which is a highest weight module with highest weight $\lambda$.  It is also the irreducible quotient of $M_{\emptyset}(\lambda)$. In the case when $\dim(F_{\lambda})=1$, we usually denote it by $\mathbb{C}_{\lambda}$ and $M_I(\lambda)$ is called a {\it scalar generalized Verma module}.
	
	The theory of highest weight modules of simple complex finite-dimensional Lie algebras is built on the original work of Verma \cite{VD}. In that work, Verma introduced and studied the generic family of highest weight modules known as Verma modules, which have become the foundation of the field. There have been several attempts to extend the theory of Verma modules, and one of the most natural ways is to generalize the Verma modules themselves.  This can be accomplished in various ways, such as in \cite{AG1,AR,JL}. Generalized Verma modules (GVM) have been studied from various perspectives, and many of the properties of classical Verma modules have either been proved in GVMs or have been generalized. For a study of GVM, see Mazorchuk's work in \cite{MV}. In this article, he provided a comprehensive survey of GVM for parabolic subalgebras of simple Lie algebras using parabolic induction and outlined the historical development of the subject.

	The problem of reducibility of generalized Verma modules is of great significance in representation theory and is closely related to several other problems, as described in \cite{BXiao, EHW, MH}. The essential tool for solving this problem is Jantzen's criterion \cite{JC}, although it can be quite complicated in practical applications. However, Kubo \cite{Ku} found some practical reducibility criteria for solving the reducibility of scalar generalized Verma modules associated with exceptional simple Lie algebras and certain maximal parabolic subalgebras. Using Kubo's results, He \cite{HH} established the reducibility of all scalar generalized Verma modules of Hermitian symmetric pairs. Then He--Kubo--Zierau \cite{HKZ} extended it to all scalar generalized Verma modules associated with maximal parabolic subalgebras. Recently, Bai--Xiao \cite{BXiao} solved the reducibility problem for all generalized Verma modules of Hermitian symmetric pairs.
	
	The GK dimension plays a fundamental role in describing algebraic structures with infinite dimensions. It has been used to measure the size of the representations of Lie algebras and Lie group  since Joseph's work \cite{AJ1}. Recently, a study led by Bai--Xiao \cite{BX} proved that a scalar generalized Verma module is reducible if the GK dimension of its simple quotient is smaller than the dimension of $\fu$. (This approach does not rely on the reduction method outlined in \cite{HKZ}). For classical Lie algebras, by using this result and the algorithm of computing GK
	dimension as in \cite{BX1,BXX}, Bai--Jiang \cite{BJ} gave a new proof for the complete list of highest weights for which the
	scalar generalized Verma modules associated to maximal parabolic subalgebras are reducible. Jiang \cite{JJ} gave a proof for the complete list of highest weights for which the
	scalar generalized Verma modules associated to classical minimal parabolic subalgebras are reducible. For exceptional Lie algebras, we can use the same technique as in \cite{BGXW} to compute the GK dimension of the scalar generalized Verma modules associated to minimal parabolic subalgebras. The following web page in \cite{BGXW} is convenient to compute the GK dimension of $L(\lambda)$:
	\begin{center}
		\textcolor{blue}{http://test.slashblade.top:5000/lie/GKdim},
	\end{center}
	this program takes $\lambda$ as the input and returns
$\operatorname{GKdim} L(\lambda)$.

	The paper is organized as follows. The necessary preliminaries for GK dimension,  minimal parabolic subalgebra, Robinson--Schensted (RS) insertion algorithm, Lusztig's {\bf a}-function and PyCox are given in  \S \ref{pre}.  The reducibility of scalar generalized Verma modules of type $G_2$ and $F_4$ are given in \S \ref{type_gf}. In \S \ref{type_e}, we give the reducibility of scalar generalized Verma modules of type $E$. In \S \ref{app}, we tabulate the sets of reducible points of $M_{I}(z\widehat{\xi_p})$ corresponding to each exceptional Lie algebra.
	
	 \section*{Acknowledgments}
	  The authors thank Li Luo and the anonymous referees for providing many constructive comments and helping in improving the contents of our paper. The first author is grateful for the financial support provided by the Program of China Scholarship Council (Grant No. 202506140041).
	
	\section{Preliminaries}\label{pre}
	In this section, we will give brief preliminaries for GK dimension, minimal parabolic subalgebra, Lusztig's {\bf a}-function, RS algorithm, integral root system and PyCox. See \cite{BGXW,JJ} for more details.
	
	\subsection{Minimal parabolic subalgebra}
	A parabolic subalgebra $\mathfrak{q}_I$ is said to be minimal if $I$ is a minimal nonempty subset of $\Pi$. In other words, there is only one element in $I$.
	
	\begin{Lem}[{\cite[Lem. 3.2]{JJ}}]
		If $\fq_I$ is the minimal parabolic subalgebra, then $\dim(\fu)=\arrowvert\Delta^+\arrowvert-1$.
	\end{Lem}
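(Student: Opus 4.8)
The plan is to compute $\dim(\fu)$ directly from the root-space decomposition of the nilradical. Writing $I=\{\al\}$ for the single simple root that defines the minimal parabolic, I would first record the standard description of the Levi decomposition: the Levi factor is $\fl_I=\fh\oplus\bigoplus_{\be\in\Delta_I}\mathfrak{g}_\be$ and the nilradical is $\fu_I=\bigoplus_{\be\in\Delta^+\setminus\Delta_I^+}\mathfrak{g}_\be$, where $\Delta_I\subset\Delta$ is the subsystem generated by $I$ and $\Delta_I^+=\Delta_I\cap\Delta^+$. Since each root space $\mathfrak{g}_\be$ is one-dimensional, summing gives $\dim(\fu)=|\Delta^+|-|\Delta_I^+|$, so the whole statement reduces to counting the positive roots lying in $\Delta_I$.

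The second step is to identify $\Delta_I$ explicitly. Because $I=\{\al\}$ consists of a single simple root, the subsystem it generates is the rank-one system $\{\pm\al\}$, i.e. a copy of type $A_1$: any root of $\Delta$ that is an integer multiple of $\al$ must equal $\pm\al$, using that $2\al$ is not a root when $\al$ is reduced (in particular when $\al$ is simple). Consequently $\Delta_I^+=\{\al\}$ has exactly one element.

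Combining the two steps yields $\dim(\fu)=|\Delta^+|-|\Delta_I^+|=|\Delta^+|-1$, as claimed. I expect no genuine obstacle in this argument; the only point deserving a moment's care is the verification that $\Delta_I$ contains no root besides $\pm\al$, which is precisely where the minimality (singleton) of $I$ is used and which is what distinguishes this case from a general parabolic, where $|\Delta_I^+|$ can be strictly larger than $1$.
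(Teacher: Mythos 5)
Your proof is correct and follows essentially the same route as the paper, which simply notes $\fq_I=\fl_I\oplus\fu_I$ with $|\Delta^+(\fl)|=1$ and declares the count obvious; you merely fill in the routine details (one-dimensionality of root spaces and the fact that a reduced root system contains no multiple of a simple root $\al$ other than $\pm\al$, so $\Delta_I^+=\{\al\}$). No gaps.
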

	\begin{proof}
		We know $\fq_I=\fl_I\oplus\fu_I$ and $\dim(\Delta^+(\fl))=1$, then the value of $\dim(\fu)$ is obvious.
	\end{proof}
	Thus we have the following table.
	\begin{table}[htbp]
		\centering
		\renewcommand{\arraystretch}{1.4}
		\setlength\tabcolsep{5pt}
		\caption{$\dim(\fu)$ of $G$ with minimal parabolic}
		\label{constants}
		{	\begin{tabular}{llll}		
				\toprule
				$G$ & $|\Delta^+|$&   & $\dim(\fu)$ \\ \hline 
				%\midrule
				$SL(n,\mathbb{C})$ & $\frac{n(n-1)}{2}$&  & $\frac{n(n-1)}{2}-1$  \\
				$SO(2n+1,\mathbb{C})$  & $n^2$&  & $n^2-1$ \\ 
				$Sp(2n,\mathbb{C})$  & $n^2$& & $n^2-1$   \\  
				$SO(2n,\mathbb{C})$ & $n^2-n$&  & $n^2-n-1$ \\ 
				$E_{6}$   & 36&  & $35$ \\  
				$E_{7}$  & 63& & $62$ \\
				$E_{8}$  & 120& & $119$ \\
				$F_{4}$  & 24& & $23$ \\
				$G_{2}$  & 6& & $5$ \\
				\bottomrule
				
		\end{tabular}}
	\end{table}

	\subsection{Lusztig's {\bf a}-function}
	Let $(W, S)$ be a Coxeter group with Coxeter matrix $\left(m_{s t}\right)_{s, t \in S}$ and length function $\ell$. Following the seminal papers \cite{KL,lusztig1985cellsI}, we define the Hecke algebra $\bm{\mathcal{H}}$ of $(W, S)$ as follows. Let $q^{\frac{1}{2}}$ be an indeterminate and $\mathbb{Z}\left[q^{ \pm \frac{1}{2}}\right]$ be the ring of Laurent polynomials in $q^{\frac{1}{2}}$. Let $\bm{\mathcal{H}}$ be a free $\mathbb{Z}\left[q^{ \pm \frac{1}{2}}\right]$-module with a formal basis $\left\{\widetilde{T}_w\right\}_{w \in W}$. The multiplication on $\bm{\mathcal{H}}$ is defined by
	
	$$
	\widetilde{T}_s \widetilde{T}_w:=\left\{\begin{array}{ll}
		\widetilde{T}_{s w}, & \text { if } \ell(s w)>\ell(w) \\
		\left(q^{\frac{1}{2}}-q^{-\frac{1}{2}}\right) \widetilde{T}_w+\widetilde{T}_{s w}, & \text { if } \ell(s w)<\ell(w)
	\end{array}, \quad \forall s \in S, w \in W.\right.
	$$
	Then $\bm{\mathcal{H}}$ is an associative $\mathbb{Z}\left[q^{ \pm \frac{1}{2}}\right]$-algebra with unity $\widetilde{T}_e$. Here $\widetilde{T}_w=q^{-\frac{\ell(w)}{2}} T_w$ where $T_w$ is the standard basis element in \cite{KL}.
	
	Besides the normalized standard basis $\left\{\widetilde{T}_w \mid w \in W\right\}$, we have another basis $\left\{C_w \mid w \in W\right\}$ in \cite{KL}(now called the Kazhdan-Lusztig basis), such that
	
	$$
	\begin{aligned}
		C_w & =\sum_{y \in W}(-1)^{\ell(w)+\ell(y)} q^{\frac{1}{2}(\ell(w)-\ell(y))} P_{y, w}\left(q^{-1}\right) \widetilde{T}_y \\
		& =\sum_{y \in W}(-1)^{\ell(w)+\ell(y)} q^{-\frac{1}{2}(\ell(w)-\ell(y))} P_{y, w}(q) \widetilde{T}_{y^{-1}}^{-1},
	\end{aligned}
	$$
	where $P_{y, w} \in \mathbb{Z}[q], P_{y, w}=0$ unless $y \leq w, P_{w, w}=1$, and $\operatorname{deg}_q P_{y, w} \leq \frac{1}{2}(\ell(w)-\ell(y)-1)$ if $y<w$. Here $\leq$ is the Bruhat order on $W$, and $y<w$ indicates $y \leq w$ and $y \neq w$.
	
	For any $x, y, w \in W$, let $h_{x, y, w} \in \mathbb{Z}\left[q^{ \pm \frac{1}{2}}\right]$ be such that $C_x C_y=\sum_{w \in W} h_{x, y, w} C_w$. Following G. Lusztig \cite{lusztig1985cellsI}, for each $w \in W$ we define $${\bf a}(w):=\min \left\{i \in \mathbb{N} \left\lvert\, q^{\frac{i}{2}} h_{x, y, w} \in \mathbb{Z}\left[q^{\frac{1}{2}}\right]\right., \forall x, y \in W\right\}.$$ Then ${\bf a}(w)$ is a well defined natural number. We say that the function ${\bf a}$ is bounded if there is $N \in \mathbb{N}$ such that ${\bf a}(w) \leq N$ for any $w \in W$.
	
	For $x, y \in W$, we say $y \underset{\text { LR }}{\leqslant} x$ if there exist $H_1, H_2 \in \bm{\mathcal{H}}$, such that $C_y$ has nonzero coefficient in the expression of $H_1 C_x H_2$ with respect to the basis $\left\{C_w\right\}_{w \in W}$. We say $x \underset{\mathrm{LR}}{\sim} y$ if $x \underset{\mathrm{LR}}{\leqslant} y$ and $y \underset{\mathrm{LR}}{\leqslant} x$. It turns out that $\underset{\mathrm{LR}}{\leqslant}$ is a pre-order on $W$, and $\underset{\mathrm{LR}}{\sim}$ is an equivalence relation on $W$. The equivalence classes are called two-sided cells. The set of two-sided cells forms a partial order set with respect to $\underset{\text{LR}}{\leqslant}$. 
	
	We have the following standard properties of Kazhdan-Lusztig basis $\left\{C_w\right\}_{w \in W}$, two-sided cells and Lusztig's ${\bf a}$-function.
	
	\begin{Prop}[{\cite{KL,lusztig1985cellsI,lusztig2003hecke}}]\label{a_function}
		Let $s \in S$ and $w, x, y \in W$.
		
		\begin{enumerate}
			\item  $P_{y, w}(0)=1$ for any $y \leq w$.
			
			\item ${\bf a}(w)=0$ if and only if $w=e$. We have ${\bf a}(s)=1$ for any $s \in S$.
			
			\item ${\bf a}(w)={\bf a}(w^{-1})$.

			\item If $m_{r t}<\infty$ for some $r, t \in S$ with $r \neq t$, then ${\bf a}\left(w_{r t}\right)=m_{r t}$.
			
			\item If $y \underset{\mathrm{LR}}{\leqslant} x$, then ${\bf a}(y) \geq {\bf a}(x)$. If $y \underset{\mathrm{LR}}{\sim} x$, then ${\bf a}(y)={\bf a}(x)$.
			
			\item Suppose ${\bf a}$ is bounded on $W$. If ${\bf a}(y)={\bf a}(x)$ and $y \underset{\mathrm{LR}}{\leqslant} x$, then $y \underset{\mathrm{LR}}{\sim} x$.
			
			\item If $W$ is a direct product of Coxeter subgroups $W_1$ and $W_2$, then
			$${\bf a}(w)={\bf a}(w_1)+{\bf a}(w_2)$$
			for $w=(w_1,w_2)\in W_1\times W_2=W.$
		\end{enumerate}   
	\end{Prop}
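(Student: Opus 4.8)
The plan is to sort the seven assertions by the depth of machinery they require: items (1)--(4) and (7) follow directly from the definitions of the Kazhdan--Lusztig basis and the structure constants $h_{x,y,w}$, whereas the monotonicity statements (5) and (6) carry the genuine content and rest on Lusztig's theory of the $\mathbf{a}$-function. For (1) I would induct on $\ell(w)$: choosing $s\in S$ with $\ell(sw)<\ell(w)$, the recursion defining $P_{y,w}$ expresses it through $P$-polynomials of strictly shorter elements, and checking that the constant term is preserved---with the base case $P_{w,w}=1$---gives $P_{y,w}(0)=1$. For (2), the identity $C_e=\widetilde{T}_e$ forces $C_xC_e=C_x$, so $h_{x,e,w}=\delta_{x,w}\in\mathbb{Z}$ and $\mathbf{a}(e)=0$; conversely, for $w\neq e$ one picks $s$ with $\ell(sw)<\ell(w)$ and uses that a genuine half-integral power of $q$ survives in the relevant product to force $\mathbf{a}(w)\geq 1$, while $\mathbf{a}(s)=1$ follows from the explicit relation $C_s^2=-(q^{1/2}+q^{-1/2})C_s$, whose structure constant requires exactly one factor of $q^{1/2}$ to lie in $\mathbb{Z}[q^{1/2}]$. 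For (3) I would invoke the anti-automorphism $\widetilde{T}_w\mapsto\widetilde{T}_{w^{-1}}$ of $\bm{\mathcal{H}}$, which sends $C_w$ to $C_{w^{-1}}$ and yields the symmetry $h_{x,y,w}=h_{y^{-1},x^{-1},w^{-1}}$, whence $\mathbf{a}(w)=\mathbf{a}(w^{-1})$.

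Items (4) and (7) are a localization and a multiplicativity statement. For (4) the claim is internal to the rank-two dihedral subgroup $W_{\{r,t\}}=\langle r,t\rangle$, whose longest element $w_{rt}$ has length $m_{rt}$; since all Kazhdan--Lusztig polynomials of a dihedral group are trivial, the structure constants controlling $\mathbf{a}(w_{rt})$ can be computed explicitly in the finite dihedral Hecke algebra, giving $\mathbf{a}(w_{rt})=m_{rt}$. For (7) I would use the tensor decomposition $\bm{\mathcal{H}}(W)\cong\bm{\mathcal{H}}(W_1)\otimes_{\mathbb{Z}[q^{\pm 1/2}]}\bm{\mathcal{H}}(W_2)$, under which $C_{(w_1,w_2)}=C_{w_1}\otimes C_{w_2}$, so that $h_{(x_1,x_2),(y_1,y_2),(w_1,w_2)}=h_{x_1,y_1,w_1}\,h_{x_2,y_2,w_2}$; the minimal power of $q^{1/2}$ clearing denominators of a product is the sum of those of the factors, which is precisely $\mathbf{a}(w)=\mathbf{a}(w_1)+\mathbf{a}(w_2)$.

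The essential work lies in (5) and (6). For (5), the inequality $\mathbf{a}(y)\geq\mathbf{a}(x)$ when $y\underset{\mathrm{LR}}{\leqslant}x$ is Lusztig's monotonicity of $\mathbf{a}$ along the two-sided pre-order; I would follow his argument through the auxiliary integers $\gamma_{x,y,z^{-1}}$ (the leading coefficients of the $h_{x,y,z}$) and the degree estimates comparing structure constants under left and right multiplication, and then obtain the equality on two-sided cells by applying the inequality in both directions. For (6), the hypothesis that $\mathbf{a}$ is bounded is indispensable: it is what secures the good behaviour of Lusztig's asymptotic ring $J$---in particular the distinguished involutions and the nondegeneracy of the relevant pairing---from which $\mathbf{a}(y)=\mathbf{a}(x)$ together with $y\underset{\mathrm{LR}}{\leqslant}x$ can be upgraded to $y\underset{\mathrm{LR}}{\sim}x$. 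I expect the main obstacle to be marshalling precisely this structural input---the existence and positivity of the $\gamma_{x,y,z}$ and the consequences of boundedness---so in practice I would cite \cite{lusztig1985cellsI,lusztig2003hecke} for (5) and (6) and reserve direct verification for the elementary items (1)--(4) and (7).
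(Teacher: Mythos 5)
First, a point of comparison: the paper does not prove this proposition at all --- it is imported verbatim as a package of standard facts with the citations \cite{KL,lusztig1985cellsI,lusztig2003hecke}, so there is no in-paper argument for your sketch to match or diverge from. Measured against the actual proofs in those references, your treatment of (1), (3) and (7) is the standard one and is correct: the KL recursion preserves the constant term of $P_{y,w}$; the anti-automorphism $\widetilde{T}_w\mapsto\widetilde{T}_{w^{-1}}$ fixes each $C$-basis element up to relabelling and yields $h_{x,y,w}=h_{y^{-1},x^{-1},w^{-1}}$; and $C_{(w_1,w_2)}=C_{w_1}\otimes C_{w_2}$ makes the structure constants multiply, where the lowest-degree terms of the two factors multiply without cancellation (a point you use implicitly but which is fine, being a product of two Laurent polynomials). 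Deferring (5) and (6) to Lusztig is also exactly right, and your remark that boundedness is indispensable in (6) agrees with the sources.

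There are, however, two concrete problems in the items you classify as ``direct from the definitions.'' In (2), the computation $C_xC_e=C_x$ controls the constants $h_{x,e,w}$, whereas ${\bf a}(e)$ is governed by $h_{x,y,e}$ --- the coefficient of $C_e$ in an \emph{arbitrary} product $C_xC_y$ --- so your identity says nothing about ${\bf a}(e)$; you have conflated the indices, and ${\bf a}(e)=0$ is not the triviality you present it as. More generally, since ${\bf a}(w)$ quantifies over all $x,y\in W$, any computation carried out inside a standard parabolic subgroup only yields a \emph{lower} bound: $C_s^2=\pm\bigl(q^{\frac{1}{2}}+q^{-\frac{1}{2}}\bigr)C_s$ gives ${\bf a}(s)\geq 1$, and your dihedral computation gives ${\bf a}(w_{rt})\geq m_{rt}$ (this half is sound, because the KL basis of the parabolic Hecke subalgebra agrees with that of $\bm{\mathcal{H}}$ on elements of the parabolic subgroup). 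The matching upper bounds ${\bf a}(s)\leq 1$ and ${\bf a}(w_{rt})\leq m_{rt}$ do not follow from the elementary degree estimate $\deg h_{x,y,z}\leq\min(\ell(x),\ell(y))$, which bounds degrees in terms of $x$ and $y$ rather than $z$; they rest on the deeper parts of the cited theory (Lusztig's degree estimates and the compatibility of ${\bf a}$ with parabolic subgroups, cf.\ Geck's induction-of-cells theorem). So (2) and (4) belong with (5)--(6) in your ``cite the literature'' bucket rather than among the elementary verifications; with that relabelling, and the index slip repaired, your outline is a faithful account of the sources the paper is quoting.
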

	
	\subsection{PyCox}  In this paper, for a simple root $\alpha_i\in\Pi$ we simply write the simple reflection $s_{\alpha_i}$ by $s_i$. Here we adopt Geck's notations in \cite{Geck}, which can be used to compute the value of ${\bf a}(w)$ for $w\in W$.  In type $F_4$ and $E_n$, we use $[i_1-1,i_2-1,\cdots,i_k-1]$ to represent $w=s_{i_1}s_{i_2}\cdots s_{i_k}$. In PyCox the function ‘klcellrepem’ will give us the ${\bf a}$ value of $w\in W$ and the character of the left cell to which $w$ belongs (note that the notation for characters in PyCox is different with Carter \cite{Ca85}).
	
	\begin{Exam}
		In the case of $E_7$, we use $[3,1,3]$ to denote the element $w=s_4s_2s_4$. Then by PyCox we have:
		
		\texttt{>>> W = coxeter("E",7)}
		
		\texttt{>>>print(klcellrepem(W,[3,1,3]))}
		
		\texttt{\{'size': 77, 'character': [['56\_a',1],['21\_a',1]], 'a': 3,}
		
		\texttt{'special': '56\_a', 'index': 8, 'elms': False, 'distinv': False\}. }
		
		Then we have ${\bf a}(w)=3$, the corresponding characters are $56_a$ and $21_a$,  and we can easily compute that the Gelfand--Kirillov dimension is 60 since GKdim $L(\lambda)=|\Delta^+|-{\bf a}(w)$ \cite{BX1}.
		
	\end{Exam}
	
	\subsection{RS algorithm}\label{R-S} We recall the RS algorithm which will be used in our paper. Some details can be found in \cite{BX1}.
	
	For  a totally ordered set $ \Gamma $, we  denote by $ \mathrm{Seq}_n (\Gamma)$ the set of sequences $ x=(x_1,x_2,\dots, x_n) $   of length $ n $ with $ x_i\in\Gamma $. In our paper, we usually take $\Gamma$ to be $\mathbb{Z}$ or a coset of $\mathbb{Z}$ in $\mathbb{C}$.
	Then we have a  Young tableau $P(x)$ obtained by applying the following RS algorithm  to $x\in \mathrm{Seq}_n (\Gamma)$. 
	\begin{Def}[RS algorithm]
		For an element  $ x \in  \mathrm{Seq}_n (\Gamma)$, we write  $x=(x_1,\dots,x_n)$. We associate to $x $ a  Young tableau  $ P(x) $ as follows: Let $ P_0 $ be an empty Young tableau. Assume that we have constructed the Young tableau $ P_k $ associated to $ (x_1,\dots,x_k) $, $ 0\leq k<n $. Then $ P_{k+1} $ is obtained by adding $ x_{k+1} $ to $ P_k $ as follows. Firstly we add $ x_{k+1} $ to the first row of $ P_k $ by replacing the leftmost entry $ x_j $ in the first row which is \textit{strictly} bigger than $ x_{k+1} $.  (If there is no such an entry $ x_j $, we just add a box with entry $x_{k+1}  $ to the right side of the first row, and end this process). Then add this $ x_j $ to the next row as the same way of adding $x_{k+1} $ to the first row.  Finally we put $P(x)=P_n$.
		
	\end{Def}

	Denote the shape of the Young tableau $P(x)$ by ${\bf p}(x)=\mathrm{sh}(P(x))=[p_1,p_2,...,p_k]$, where $p_i$ is the number of boxes in the $i$-th
	row of $P(x)$. Then ${\bf p}(x)$ is a partition of $n=\sum\limits_{1\leq i\leq k} p_i$.
	
	\begin{Exam}

		Suppose $x=(1,-3,2,-1,-3)$.  Usually  we write $$x=(1,-3,2,-1,-3')$$ and regard $-3<-3'$. Then from the RS algorithm, we have
		$$
		\begin{tikzpicture}[scale=\domscale+0.1,baseline=-10pt]
			\hobox{0}{0}{1}
			
		\end{tikzpicture}\stackrel{-3}{\to}
		\begin{tikzpicture}[scale=\domscale+0.1,baseline=-18pt]
			\hobox{0}{0}{-3}
			\hobox{0}{1}{1}
			
		\end{tikzpicture}\stackrel{2}{\to}  
		\begin{tikzpicture}[scale=\domscale+0.1,baseline=-18pt]
			\hobox{0}{0}{-3}
			\hobox{1}{0}{2}
			\hobox{0}{1}{1}
			
		\end{tikzpicture}\stackrel{-1}{\to}
		\begin{tikzpicture}[scale=\domscale+0.1,baseline=-18pt]
			\hobox{0}{0}{-3}
			\hobox{1}{0}{-1}
			\hobox{1}{1}{2}
			\hobox{0}{1}{1}
			
		\end{tikzpicture}\stackrel{-3}{\to}
		\begin{tikzpicture}[scale=\domscale+0.1,baseline=-18pt]
			\hobox{0}{0}{-3}
			\hobox{1}{0}{-3}
			\hobox{0}{2}{1}
			\hobox{0}{1}{-1}
			\hobox{1}{1}{2}
		\end{tikzpicture}=P(x).$$
		Thus we have ${\bf p}(x)=[2,2,1]$, which is a partition of $5$.
		% {\begin{tikzpicture}[scale=\domscale+0.1,baseline=-20pt]
				% 		\hobox{0}{0}{-6}
				% 		\hobox{0}{1}{-5}
				%            \hobox{1}{0}{-4}
				%            \hobox{2}{0}{2}
				
				% \end{tikzpicture}}\to
		% {\begin{tikzpicture}[scale=\domscale+0.1,baseline=-20pt]
				% 		\hobox{0}{0}{-6}
				% 		\hobox{0}{1}{-5}
				%            \hobox{1}{0}{-4}
				%            \hobox{1}{1}{2}
				%            \hobox{2}{0}{-2}
				
				% \end{tikzpicture}}\to
		% {\begin{tikzpicture}[scale=\domscale+0.1,baseline=-20pt]
				% 	\hobox{0}{0}{-6}
				% 		\hobox{0}{1}{-5}
				%            \hobox{1}{0}{-4}
				%            \hobox{1}{1}{2}
				%            \hobox{2}{0}{-2}
				%             \hobox{3}{0}{4}
				%              \hobox{4}{0}{6}
				% \end{tikzpicture}}\\
		% &\to
		% {\begin{tikzpicture}[scale=\domscale+0.1,baseline=-20pt]
				% 	\hobox{0}{0}{-6}
				% 		\hobox{0}{1}{-5}
				%            \hobox{1}{0}{-4}
				%            \hobox{1}{1}{2}
				%            \hobox{2}{1}{6}
				%            \hobox{2}{0}{-2}
				%             \hobox{3}{0}{4}
				%              \hobox{4}{0}{5}
				% \end{tikzpicture}}
		%  =P(\lambda^-).

	\end{Exam}

	\subsection{Gelfand--Kirillov dimension}
	
	Let $M$ be a finite generated $U(\mathfrak{g})$-module. Fix a finite-dimensional generating subspace $M_0$ of $M$. Let $U_{n}(\mathfrak{g})$ be the standard filtration of $U(\mathfrak{g})$. Set $M_n=U_n(\mathfrak{g})\cdot M_0$ and
	\(
	\text{gr} (M)=\bigoplus\limits_{n=0}^{\infty} \text{gr}_n M,
	\)
	where $\text{gr}_n M=M_n/{M_{n-1}}$. Thus $\text{gr}(M)$ is a graded module of $\text{gr}(U(\mathfrak{g}))\simeq S(\mathfrak{g})$.

	\begin{Def} The \textit{Gelfand--Kirillov dimension} of $M$  is defined by
		\begin{equation*}
			\operatorname{GKdim} M = \varlimsup\limits_{n\rightarrow \infty}\frac{\log\dim( U_n(\mathfrak{g})M_{0} )}{\log n}.
		\end{equation*}
	\end{Def}
	
	It is easy to see that the above definition is independent of the choice of $M_0$. Then we have the following lemma.
	\begin{Lem}[{\cite[Lem. 4.4]{BX}}]\label{GKdown}
		For any $z\in\mathbb{C}$, we have
		\[
		\operatorname{GKdim}(L((z+1)\xi))\leq\operatorname{GKdim}(L(z\xi)),
		\]
		where $\xi$ is the fundamental weight of a simple root $\alpha$.
		In particular, if $M_I(z\xi)$ is reducible, then $M_I((z+1)\xi)$ is also reducible.
	\end{Lem}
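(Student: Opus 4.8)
The plan is to prove the inequality by a tensor--product argument and then to upgrade the one-sided reducibility criterion of Bai--Xiao \cite{BX} into a genuine equivalence for scalar generalized Verma modules, from which the ``in particular'' clause is immediate. For the main inequality, let $V=L(\xi)$ be the finite-dimensional irreducible $\g$-module with highest weight $\xi$; this is well defined since $\xi$ is a dominant integral fundamental weight. First I would observe that tensoring by a finite-dimensional module cannot increase the Gelfand--Kirillov dimension: if $M_0$ generates $L(z\xi)$, then $M_0\otimes V$ generates $L(z\xi)\otimes V$, and the Leibniz rule $X\cdot(m\otimes v)=Xm\otimes v+m\otimes Xv$ gives $U_n(\g)(M_0\otimes V)\subseteq (U_n(\g)M_0)\otimes V$, whence $\dim U_n(\g)(M_0\otimes V)\leq \dim(V)\cdot\dim U_n(\g)M_0$ and therefore $\operatorname{GKdim}(L(z\xi)\otimes V)\leq\operatorname{GKdim} L(z\xi)$. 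Next I would locate $L((z+1)\xi)$ inside this tensor product: the maximal weight of $L(z\xi)\otimes V$ is $z\xi+\xi=(z+1)\xi$, occurring with multiplicity one as the product of the two highest weight lines, so $v_{z\xi}\otimes v_\xi$ is a highest weight vector of weight $(z+1)\xi$ and the $U(\g)$-module it generates has $L((z+1)\xi)$ as its unique irreducible quotient. Since the Gelfand--Kirillov dimension of a subquotient never exceeds that of the ambient module, combining the two facts yields
\[
\operatorname{GKdim} L((z+1)\xi)\leq \operatorname{GKdim}\big(L(z\xi)\otimes V\big)\leq \operatorname{GKdim} L(z\xi),
\]
which is the asserted inequality.

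For the second statement I would first record that every nonzero $U(\g)$-submodule of a scalar generalized Verma module has full Gelfand--Kirillov dimension $\dim\fu$. Writing $\overline{\fu}$ for the opposite nilradical of $\fq_I$, the PBW decomposition $U(\g)\cong U(\overline{\fu})\otimes U(\fq_I)$ identifies $M_I(\lambda)\cong U(\overline{\fu})$ as a free left $U(\overline{\fu})$-module of rank one. A nonzero $U(\g)$-submodule $K$ is in particular a nonzero left ideal of the domain $U(\overline{\fu})$; choosing $0\neq x\in K$ gives $U(\overline{\fu})x\cong U(\overline{\fu})\subseteq K$, so $\operatorname{GKdim}_{U(\overline{\fu})}K\geq \dim\overline{\fu}=\dim\fu$, and since $U(\overline{\fu})\subseteq U(\g)$ this forces $\operatorname{GKdim}_{U(\g)}K=\dim\fu$ (the reverse bound being automatic from $K\subseteq M_I(\lambda)$). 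Now suppose $M_I(\lambda)$ is reducible and let $N\neq 0$ be its maximal proper submodule, so $L(\lambda)=M_I(\lambda)/N$. Passing to associated graded modules over $\operatorname{gr}U(\g)\cong S(\g)$ and writing $e(\,\cdot\,)$ for the multiplicity along the $\dim\fu$-dimensional component, the additivity in the short exact sequence gives $e(M_I(\lambda))=e(N)+e(L(\lambda))$, while $\operatorname{gr}M_I(\lambda)\cong S(\overline{\fu})$ is reduced and irreducible, so $e(M_I(\lambda))=1$. As $N\neq 0$ has $\operatorname{GKdim}N=\dim\fu$ by the previous step, $e(N)\geq 1$, forcing $e(L(\lambda))=0$, i.e. $\operatorname{GKdim} L(\lambda)<\dim\fu$. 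Together with the Bai--Xiao criterion \cite{BX}, which supplies the reverse implication, this gives the equivalence
\[
M_I(\lambda)\ \text{is reducible}\iff \operatorname{GKdim} L(\lambda)<\dim\fu .
\]
Applying it to $\lambda=z\xi$ and $\lambda=(z+1)\xi$ and invoking the inequality already proved, reducibility of $M_I(z\xi)$ yields $\operatorname{GKdim} L((z+1)\xi)\leq\operatorname{GKdim} L(z\xi)<\dim\fu$, hence $M_I((z+1)\xi)$ is reducible.

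The delicate point is the ``reducible $\Rightarrow$ GK-drop'' direction, which the ambient criterion does not by itself supply. The argument hinges on two structural inputs that I would have to justify with care: that a nonzero submodule of a free rank-one $U(\overline{\fu})$-module inherits full dimension (using that $U(\overline{\fu})$ is a Noetherian domain and that $\operatorname{GKdim}$ computed over the subalgebra $U(\overline{\fu})$ bounds $\operatorname{GKdim}$ over $U(\g)$ from below), and the additivity of the top-dimensional multiplicity together with the fact that $M_I(\lambda)$ is multiplicity-free along its associated variety $\overline{\fu}$. Everything else---the tensor-product estimate and the monotonicity of $\operatorname{GKdim}$ under subquotients---is formal.
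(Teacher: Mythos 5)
Your proof is correct. The paper itself contains no argument for this lemma---it is imported verbatim from \cite{BX}---and your first paragraph reconstructs exactly the standard proof: $v_{z\xi}\otimes v_{\xi}$ is a highest weight vector of weight $(z+1)\xi$ in $L(z\xi)\otimes L(\xi)$, so $L((z+1)\xi)$ is a subquotient, and tensoring with a finite-dimensional module cannot raise the Gelfand--Kirillov dimension (your argument uses only that $\xi$ is dominant integral, so it covers the weight $\widehat{\xi_p}=\rho-\xi_p$ to which the paper actually applies the lemma). Your second half, however, re-proves something the paper simply cites: the equivalence you establish there is precisely Lemma~\ref{reducible}, i.e.\ \cite[Thm.~1.1]{BX}, so for the ``in particular'' clause you could have invoked it directly; that said, your Bernstein-multiplicity argument (multiplicity one of $M_I(\lambda)\cong U(\overline{\fu})$, full GK dimension of nonzero submodules because $U(\overline{\fu})$ is a domain, additivity of the top-dimensional multiplicity along $0\to N\to M_I(\lambda)\to L(\lambda)\to 0$) is sound and is essentially how that theorem is proved at the source. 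One further remark: the ``reverse implication'' you defer to Bai--Xiao is actually trivial, since irreducibility forces $L(\lambda)=M_I(\lambda)$ and hence $\operatorname{GKdim}L(\lambda)=\dim\fu$ outright; so your account is in fact entirely self-contained, and the only genuinely delicate points are the two structural inputs you correctly flag.
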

	
	The following lemma is very useful in our proof of the main results.
	\begin{Lem}[{\cite[Thm. 1.1]{BX}}]\label{reducible}
		A scalar generalized Verma module $M_I(\lambda)$ is irreducible if and only if ${\rm GKdim}\:L(\lambda)=\dim(\mathfrak{u})$.
	\end{Lem}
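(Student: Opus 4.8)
The plan is to reduce the statement to the elementary geometric fact that a nonzero homogeneous ideal in a polynomial ring strictly lowers the dimension of the quotient, after identifying the associated graded of $M_I(\lambda)$ with such a polynomial ring. First I would invoke the Poincar\'e--Birkhoff--Witt theorem. Writing $\fg=\overline{\fu}_I\oplus\fl_I\oplus\fu_I$, with $\overline{\fu}_I$ the opposite nilradical of $\fq_I$, we have $U(\fg)\cong U(\overline{\fu}_I)\otimes U(\fq_I)$, so the canonical generator $v_\lambda=1\otimes 1$ makes $M_I(\lambda)$ a free $U(\overline{\fu}_I)$-module of rank one. Since $\C_\lambda$ is one-dimensional, $\fq_I$ acts on $v_\lambda$ by scalars, and filtering by $M_n=U_n(\fg)\,v_\lambda$ yields $M_n=U_n(\overline{\fu}_I)\,v_\lambda$. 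Hence
\[
\operatorname{gr}M_I(\lambda)\;\cong\;\operatorname{gr}U(\overline{\fu}_I)\;\cong\;S(\overline{\fu}_I)
\]
as graded modules over $\operatorname{gr}U(\fg)\cong S(\fg)$, where the $S(\fg)$-action factors through the polynomial algebra $S(\overline{\fu}_I)$ (the elements of $\fl_I$ and $\fu_I$ acting by zero). In particular $\operatorname{GKdim}M_I(\lambda)=\dim\overline{\fu}_I=\dim\fu$, and $\operatorname{gr}M_I(\lambda)$ is an integral domain.

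Next, let $N=\ker\big(M_I(\lambda)\twoheadrightarrow L(\lambda)\big)$ be the maximal proper submodule and equip it with the induced filtration $N_n=N\cap M_n$. This is a good filtration, so
\[
0\to\operatorname{gr}N\to\operatorname{gr}M_I(\lambda)\to\operatorname{gr}L(\lambda)\to0
\]
is exact. Thus $\operatorname{gr}N$ is a graded $S(\fg)$-submodule of $\operatorname{gr}M_I(\lambda)\cong S(\overline{\fu}_I)$; as the action factors through $S(\overline{\fu}_I)$, it is a homogeneous ideal $J\subseteq S(\overline{\fu}_I)$, and $\operatorname{gr}L(\lambda)\cong S(\overline{\fu}_I)/J$. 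If $\operatorname{gr}N=0$ then $N=N\cap M_0$; since $v_\lambda$ spans $M_0$ and generates $M_I(\lambda)$ while $L(\lambda)\neq 0$, necessarily $N\cap M_0=0$, whence $N=0$. Conversely $N=0$ gives $J=0$, so $N=0$ if and only if $J=0$.

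Finally I would read the equivalence off the geometry of $J$. When $J=0$ we have $\operatorname{gr}L(\lambda)\cong S(\overline{\fu}_I)$, so $\operatorname{GKdim}L(\lambda)=\dim\fu$. When $J\neq0$, pick a nonzero homogeneous $f\in J$; then $\operatorname{Supp}\big(\operatorname{gr}L(\lambda)\big)\subseteq\{f=0\}$, a hypersurface, so $\operatorname{GKdim}L(\lambda)=\dim\operatorname{Supp}\big(\operatorname{gr}L(\lambda)\big)\le\dim\overline{\fu}_I-1<\dim\fu$. Combining with the previous paragraph, $M_I(\lambda)$ is irreducible $\iff N=0\iff J=0\iff\operatorname{GKdim}L(\lambda)=\dim\fu$, which is the claim.

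The main obstacle is the filtration book-keeping rather than any deep input: one must verify carefully that $M_n=U_n(\overline{\fu}_I)\,v_\lambda$, so that $\operatorname{gr}M_I(\lambda)$ is the \emph{entire} polynomial ring with no degeneration, and justify the exactness of the associated-graded sequence for the induced good filtration together with the identity $\operatorname{GKdim}=\dim\operatorname{Supp}(\operatorname{gr}(\,\cdot\,))$. Once the identification $\operatorname{gr}M_I(\lambda)\cong S(\overline{\fu}_I)$ is secured, the decisive step---that a nonzero homogeneous ideal strictly cuts the Krull dimension of the quotient---is elementary.
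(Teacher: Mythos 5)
Your proposal is correct and takes essentially the same route as the paper's source: this lemma is stated in the paper without proof, being imported from \cite[Thm.~1.1]{BX}, and the argument there is precisely yours---identify $\operatorname{gr}M_I(\lambda)\cong S(\overline{\fu}_I)$ via PBW (using that $\fq_I$ acts on $v_\lambda$ by a character, so $M_n=U_n(\overline{\fu}_I)v_\lambda$ with no degeneration), pass the maximal submodule to a homogeneous ideal $J$ of the domain $S(\overline{\fu}_I)$, and note that $J\neq 0$ forces $\operatorname{GKdim}L(\lambda)\leq\dim\fu-1$ while $J=0$ gives equality. The book-keeping points you flag (exactness of $\operatorname{gr}$ for the induced and quotient filtrations, goodness, and $\operatorname{GKdim}=\dim\operatorname{Supp}(\operatorname{gr}(\cdot))$ for good filtrations) are standard and close the argument.
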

	
	There is a formula connecting the Lusztig's {\bf a}-function and the GK dimension
	$$\text{GK}\dim L(\lambda)=|\Delta^+|-{\bf a}(w_{\lambda})\text{~for~some~}w_{\lambda}\in W$$
	in \cite{BX1}, it also holds for exceptional types. In \cite{BGXW} the authors have found an efficient algorithm to compute the Gelfand--Kirillov dimension of highest weight modules of exceptional Lie algebras and we recall their notations and algorithms here.
	
	Let $(-, -): \mathfrak{h} \times \mathfrak{h}^* \to \mathbb{C}$ be the canonical pairing. For $\mu\in\fh^*$, define
	$$\Delta_{[\mu]}:=\{\alpha\in\Delta~|~(\mu,\alpha^{\vee})\in\mathbb{Z}\},$$
	where $\alpha^{\vee}$ is the coroot of the root $\alpha\in\Delta$. Denote
	$$W_{[\mu]}:=\{w\in W~|~w\mu-\mu\in\mathbb{Z}\Delta\}.$$
	Then $\Delta_{[\mu]}$ is a root system and let $\Pi_{[\mu]}$ be the simple system of $\Delta_{[\mu]}$. Set $S:=\{\alpha\in\Pi_{[\mu]}~|~(\mu,\alpha^{\vee})=0\}$. Let $\ell$ be the length function on $W_{[\mu]}$ and 
	$$W_{[\mu]}^S:=\{w\in W_{[\mu]} ~|~\ell(ws_{\alpha})=\ell(w)+1\text{~for~all~}\alpha\in S \}.$$
	
	A weight $\mu$ is {\it anti-dominant} if $(\mu,\alpha^{\vee})\notin\mathbb{Z}_{>0}$ for all $\alpha\in\Delta^+$. For any $\lambda\in\fh^*$, there exists a unique anti-dominant weight $\mu\in\fh^*$ and a unique $w_{\lambda}\in W_{[\mu]}^S$.
	
	For integral highest weight modules, we can easily find the required $w_{\lambda}$ by using the positive index reduction algorithm \cite[Lem. 3.2]{BGXW}, then by using PyCox we can  compute the GK dimension. For non-integral highest weight modules, we need more notations.
	
	For a root system $\Delta$, denote $\mathfrak{h}_{\Delta}$ the corresponding Cartan subalgebra of Lie algebra $\mathfrak{g}_{\Delta}$ associated to $\Delta$. Denote $\Delta_{[\lambda]}$ the integral root system for $\lambda\in\mathfrak{h}^*_{\Delta}$ and $\Pi_{[\lambda]}$ the simple system of $\Delta_{[\lambda]}\cap\Delta^+$ in $\Delta_{[\lambda ]}$. It is provided in \cite{BGXW} a way to find $\Pi_{[\lambda]}$ as following. 
	\begin{Prop}[{\cite[Lem. 3.5]{BGXW}}]\label{integral_root_system}
		For $\lambda \in \mathfrak{h}_{\Delta}^*$, decompose $\Delta_{[\lambda]}$ into several orthogonal components:
		$$
		\Delta_{[\lambda]}=\Delta_{[\lambda]_1} \cup \Delta_{[\lambda]_2} \cup \cdots \cup \Delta_{[\lambda]_k} \simeq \Delta_1 \times \Delta_2 \times \cdots \times \Delta_k
		$$
		such that $\Delta_{[\lambda]_i} \simeq \Delta_i$ for $1 \leqslant i \leqslant k$. Then we have the following way to determine the simple system $\Pi_{[\lambda]_i}$ of $\Delta_{[\lambda]_i}$ :
		\begin{enumerate}
			\item  If $\left|\Delta_{[\lambda]_i}^{+}\right|=\frac{n(n+1)}{2}$, then $\Delta_{[\lambda]_i} \simeq A_n$. We compute $\rho_i=\frac{1}{2} \sum\limits_{\alpha \in \Delta_{[\lambda]_i}^{+}} \alpha$. Then we find out all the positive roots $\alpha \in \Phi_{[\lambda]_i}^{+}$ such that $(\rho_i, \alpha^{\vee})=1$ and denote them by $I_n=\left\{\alpha_{i_1}, \alpha_{i_2}, \ldots, \alpha_{i_n}\right\}$. Then we have $\Pi_{[\lambda]_i}=I_n$.
			\item If $\left|\Delta_{[\lambda]_i}^{+}\right|=n^2$ and the number of short roots is $n\:(n \geqslant 2)$, then $\Delta_{[\lambda]_i} \simeq B_n$. We compute $\rho_i=\frac{1}{2} \sum\limits_{\alpha \in \Delta_{[\lambda]_i}^{+}} \alpha$. Then we find out all the positive roots $\alpha \in \Delta_{[\lambda]_i}^{+}$ such that $(\rho_i, \alpha^{\vee})=1$ and denote them by $I_n=\left\{\alpha_{i_1}, \alpha_{i_2}, \ldots, \alpha_{i_n}\right\}$. Then we have $\Pi_{[\lambda]_i}=I_n$.
			\item If $\left|\Delta_{[\lambda] i}^{+}\right|=n^2$ and the number of long roots is $n(n \geqslant 3)$, then $\Delta_{[\lambda]_i} \simeq C_n$. Then similar to the case in (2), we have $\Pi_{\left[\lambda]_i\right.}=I_n$.
			\item If $\left|\Delta_{[\lambda]_i}^{+}\right|=n^2-n(n \geqslant 4)$, then $\Delta_{[\lambda]_i} \simeq D_n$. Then similar to the case in (1), we have $\Pi_{[\lambda]_i}=I_n$.
			\item If $\left|\Delta_{[\lambda]_i}^{+}\right|=36$, then $\Delta_{[\lambda]_i} \simeq E_6$. Then similar to the case in (1), we have $\Pi_{[\lambda]_i}=I_6$.
			\item If $\left|\Delta_{[\lambda]_i}^{+}\right|=63$, then $\Delta_{[\lambda]_i} \simeq E_7$. Then similar to the case in (1), we have $\Pi_{[\lambda]_i}=I_7$.
			
		\end{enumerate} 
	\end{Prop}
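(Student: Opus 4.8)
The plan is to treat the two assertions of the proposition separately for each orthogonal component $\Delta_{[\lambda]_i}$: first, that the data listed in cases (1)--(6) pin down the isomorphism type of $\Delta_{[\lambda]_i}$; and second, that the prescription ``$\alpha$ is simple if and only if $(\rho_i,\alpha^\vee)=1$'' recovers the simple system $\Pi_{[\lambda]_i}$. These two parts are logically independent, and I would carry them out in this order.

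For the type identification I would argue by a finite enumeration. Each component $\Delta_{[\lambda]_i}$ is an irreducible reduced root system, hence of type $A_n$, $B_n$, $C_n$, $D_n$, $E_6$, $E_7$, $E_8$, $F_4$ or $G_2$, and I would tabulate the cardinality of its positive system: $A_n$ gives $\tfrac{n(n+1)}{2}$, both $B_n$ and $C_n$ give $n^2$, $D_n$ gives $n^2-n$, and the exceptional types give the fixed values $36,63,120,24,6$. The content of (1)--(6) is then that these cardinalities determine $n$ and the type once the low-rank coincidences are excluded. The stated bounds $n\ge 2,3,4$ are exactly what remove the exceptional isomorphisms $A_1\cong B_1\cong C_1$, $B_2\cong C_2$, $D_2\cong A_1\times A_1$ and $D_3\cong A_3$; and the sole remaining degeneracy between $B_n$ and $C_n$, which share $|\Delta^+|=n^2$, is broken by the length data: type $B_n$ has exactly $n$ short positive roots $e_1,\dots,e_n$, whereas type $C_n$ has exactly $n$ long positive roots $2e_1,\dots,2e_n$. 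Because $\Delta_{[\lambda]}$ is a root subsystem of an exceptional $\Delta$, every component has rank at most $8$, so the list of admissible $n$ is finite and the argument reduces to inspecting finitely many values.

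For the simple-system prescription the key is the identity $(\rho_i,\alpha^\vee)=\operatorname{ht}(\alpha^\vee)$ for $\alpha\in\Delta^+_{[\lambda]_i}$, where $\operatorname{ht}$ denotes height in the dual root system. I would establish it as follows. For a simple root $\beta\in\Pi_{[\lambda]_i}$ the reflection $s_\beta$ permutes $\Delta^+_{[\lambda]_i}\setminus\{\beta\}$ and sends $\beta\mapsto-\beta$, so $s_\beta\rho_i=\rho_i-\beta$; comparing with $s_\beta\rho_i=\rho_i-(\rho_i,\beta^\vee)\beta$ gives $(\rho_i,\beta^\vee)=1$ for every simple root. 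Since any positive coroot expands as a non-negative integral combination $\alpha^\vee=\sum_\beta d_\beta\,\beta^\vee$ of the simple coroots, linearity yields $(\rho_i,\alpha^\vee)=\sum_\beta d_\beta=\operatorname{ht}(\alpha^\vee)$. Thus $(\rho_i,\alpha^\vee)=1$ forces $\alpha^\vee$ to be a simple coroot, equivalently $\alpha$ to be simple, while conversely every simple root satisfies the equality; this is precisely the assertion $\Pi_{[\lambda]_i}=\{\alpha\in\Delta^+_{[\lambda]_i}\mid(\rho_i,\alpha^\vee)=1\}$ used in each case.

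The main obstacle is not the second step, which is clean and uniform, but the bookkeeping in the first: one must confirm that $|\Delta^+|$ together with the short/long root count and the rank genuinely separates all admissible types, including at shared values such as $36$, which is realized by $A_8$, $B_6$, $C_6$ and $E_6$ simultaneously. Here I would use that the second step already returns the rank as the number of simple roots it produces, so recording the triple consisting of rank, $|\Delta^+|$ and the number of short positive roots over the finitely many irreducible subsystems embedding in an exceptional root system makes the type assignment injective, which I would verify by direct inspection.
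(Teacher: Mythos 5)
Your proposal is correct, but note that the paper never proves this statement at all: it is imported verbatim as a citation of \cite[Lem.~3.5]{BGXW}, so there is no in-paper proof to compare against, and your write-up is in effect a reconstruction of the standard argument behind the cited lemma. Both halves of your argument are sound. The second step is the clean part, and you handle the one genuinely delicate point correctly: $(\rho_i,\alpha^\vee)$ equals the height of $\alpha^\vee$ in the \emph{dual} root system, not the height of $\alpha$ (these differ already in $B_2$, where the long root $e_1+e_2$ has height $3$ but $(\rho,\alpha^\vee)=2$), and your derivation via $s_\beta\rho_i=\rho_i-\beta$ together with the nonnegative integral expansion of positive coroots in simple coroots is exactly right; one should only add explicitly that $\Delta_{[\lambda]}\cap\Delta^+$ is a positive system for the subsystem and that orthogonality of the components makes the pairing with $\rho_i$ well defined, both of which are routine. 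Your first step also flags a real defect in the literal statement that the paper glosses over: the cardinality $36$ is simultaneously $\tfrac{8\cdot 9}{2}$, $6^2$, and $|E_6^+|$, so clauses (1), (2), (3) and (5) overlap, and disambiguation genuinely requires the rank (available as the cardinality of the set $I_n$ your second step produces) together with the short-root count. In the setting where the proposition is actually applied the enumeration is even smaller than your bound of rank $\leq 8$ suggests: the ambient $\Delta$ of type $E$ is simply laced, so no $B_n$ or $C_n$ component can occur there at all, and non-simply-laced components arise only inside $F_4$ and $G_2$, where the rank is at most $4$; this is presumably why the cited lemma can afford to state the recipe without the injectivity check you supply.
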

	
	Denote the label ‘$\:\widetilde\:\:$’  attached to the connected component corresponding to short roots. In other cases it is similar. Let $\phi:\Delta_{[\lambda]}\to\phi(\Delta_{[\lambda]})$ be an isomorphism, then we have ${\bf a}(w)={\bf a}(\phi(w))$.  We will determine the value of ${\bf a}(w_\lambda)$ by Proposition \ref{integral_root_system} in the non-integral case. Then we have the following  proposition.

		\begin{Prop}{\cite[Thm. 4.1, 5.1, 6.1, 6.4 and 6.7]{BGXW}}]\label{integral_case}
			Let $L(\lambda)$ be a simple integral highest weight module of type $E$, $F$ and $G$, then the following holds:
			\begin{enumerate}
				\item ${\bf a}(w_\lambda)=|\Delta^+|$ if and only if  $\lambda+\rho$ is dominant integral.
				
				\item ${\bf a}(w_\lambda)=0$ if and only if $\lambda+\rho$ is anti-dominant integral.
				
			\end{enumerate}
		\end{Prop}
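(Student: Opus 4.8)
The strategy is to read off the two extreme values of ${\bf a}(w_\lambda)$ from the two extreme positions of $w_\lambda$ inside the Weyl group $W$, and then to translate those positions into the dominance conditions on $\lambda+\rho$. The argument is uniform across all types and rests on the formula $\operatorname{GKdim}L(\lambda)=|\Delta^+|-{\bf a}(w_\lambda)$ recalled above. One preliminary simplification makes the integral case clean: since $\lambda$ is integral we have $\Delta_{[\lambda+\rho]}=\Delta$ and $W_{[\lambda+\rho]}=W$, so the integral root system machinery of Proposition~\ref{integral_root_system} is not needed, and $w_\lambda$ is simply the minimal-length representative in $W^S$ (with $S=\{\alpha\in\Pi\mid(\lambda+\rho,\alpha^\vee)=0\}$) that carries the anti-dominant representative of the $W$-orbit of $\lambda+\rho$ to $\lambda+\rho$ itself.

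For part (2) I would argue combinatorially. By Proposition~\ref{a_function}(2), ${\bf a}(w_\lambda)=0$ if and only if $w_\lambda=e$, so it suffices to check that $w_\lambda=e$ exactly when $\lambda+\rho$ is anti-dominant integral. This is immediate from the construction of $w_\lambda$: if $\lambda+\rho$ is already anti-dominant it is its own anti-dominant orbit representative, which forces the minimal coset representative to be the identity; conversely $w_\lambda=e$ means $\lambda+\rho$ equals that representative, i.e. $(\lambda+\rho,\alpha^\vee)\le 0$ for all $\alpha\in\Delta^+$. For part (1) I would instead exploit the non-negativity of the GK dimension: from $\operatorname{GKdim}L(\lambda)=|\Delta^+|-{\bf a}(w_\lambda)\ge 0$ we see that ${\bf a}(w_\lambda)=|\Delta^+|$ is the largest possible value, attained precisely when $\operatorname{GKdim}L(\lambda)=0$, that is, precisely when $L(\lambda)$ is finite-dimensional. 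By classical highest weight theory $L(\lambda)$ is finite-dimensional if and only if $\lambda$ is dominant integral, equivalently $(\lambda+\rho,\alpha^\vee)\in\mathbb{Z}_{>0}$ for every $\alpha\in\Delta^+$, which is exactly the condition that $\lambda+\rho$ be dominant integral.

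Equivalently, and more in the spirit of the ${\bf a}$-function, part (1) may be obtained from the fact that $|\Delta^+|=\ell(w_0)={\bf a}(w_0)$ is the unique maximal value of ${\bf a}$, attained only on the lowest two-sided cell $\{w_0\}$; then ${\bf a}(w_\lambda)=|\Delta^+|\Leftrightarrow w_\lambda=w_0$, and $w_\lambda=w_0$ holds precisely when $w_0$ sends the anti-dominant representative to $\lambda+\rho$, i.e. when $\lambda+\rho$ is dominant. The only ingredients that are not purely formal are these characterizations of the extreme ${\bf a}$-values together with the bookkeeping needed to match the anti-dominance convention of the preceding definition---phrased via $(\mu,\alpha^\vee)\notin\mathbb{Z}_{>0}$---with the $\rho$-shifted conditions on $\lambda+\rho$ appearing in the statement. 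I expect the single point genuinely requiring care to be the verification that $|\Delta^+|$ is attained by ${\bf a}$ only at $w_0$ (equivalently, that $\operatorname{GKdim}L(\lambda)=0$ forces finite-dimensionality); everything else reduces to Proposition~\ref{a_function} and the construction of $w_\lambda$.
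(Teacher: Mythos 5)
Your proposal is correct, but there is nothing in the paper to compare it with line by line: the paper states this Proposition as an imported result, citing \cite[Thms.~4.1, 5.1, 6.1, 6.4 and 6.7]{BGXW} without reproducing any proof, and the type-by-type numbering of those theorems ($F_4$, $G_2$, $E_6$, $E_7$, $E_8$ separately) indicates a case-by-case verification there, whereas your argument is uniform and uses only material this paper already recalls: the formula $\operatorname{GKdim}L(\lambda)=|\Delta^+|-{\bf a}(w_\lambda)$, Proposition~\ref{a_function}, and the construction of $w_\lambda$. Your treatment of (2) is exactly the expected one: ${\bf a}(w)=0$ iff $w=e$ by Proposition~\ref{a_function}(2), and $w_\lambda=e$ iff $\lambda+\rho$ is its own anti-dominant representative, zero pairings being absorbed into $S$. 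For (1), the first route you offer is the better of the two, since ``$\operatorname{GKdim}L(\lambda)=0$ iff $\dim L(\lambda)<\infty$'' is elementary for finitely generated $U(\fg)$-modules and Cartan's theorem does the rest; it also resolves the one point where the statement genuinely needs interpretation: ``$\lambda+\rho$ dominant integral'' must mean \emph{regular} dominance, $(\lambda+\rho,\alpha^\vee)\in\mathbb{Z}_{>0}$ for all $\alpha\in\Delta^+$ (otherwise $\lambda+\rho=0$, which is simultaneously weakly dominant and anti-dominant in the paper's convention, would satisfy both (1) and (2) with incompatible ${\bf a}$-values), and you make this translation explicit. It matches the paper's own usage, e.g.\ in type $G_2$ at $z=-1$, where $\lambda+\rho=(0,-1,1)$ pairs to $1$ and $0$ with the simple coroots, hence is weakly dominant but singular, and is declared ``not dominant'' with ${\bf a}(w_\lambda)=1$. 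Your second route through the lowest two-sided cell is also sound, but, as you yourself flag, it rests on the standard facts that $\{w_0\}$ is a two-sided cell with ${\bf a}(w_0)=\ell(w_0)$, that $w_0\underset{\mathrm{LR}}{\leqslant}x$ for every $x\in W$, and that ${\bf a}$ is bounded on finite $W$, so that Proposition~\ref{a_function}(5),(6) force $w=w_0$; since none of these is established in the paper, the GK-dimension route is preferable. One small correction of bookkeeping: judging from the worked examples (e.g.\ $F_4$, $p=1$, $z=-2$, where $w_\lambda=s_1$ carries $\lambda+\rho=(-\tfrac72,-\tfrac52,-\tfrac32,-\tfrac12)$ to the anti-dominant weight $(-4,-2,-1,0)$), the paper's $w_\lambda$ moves $\lambda+\rho$ \emph{to} the anti-dominant representative, i.e.\ the inverse of the convention you state; this is harmless here because ${\bf a}(w)={\bf a}(w^{-1})$ by Proposition~\ref{a_function}(3) and both $e$ and $w_0$ are self-inverse, but the distinction would matter for intermediate values of ${\bf a}$.
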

		
		% \begin{Prop}{\cite[Thm. 4.3, 5.2, 6.2, 6.5 and 6.8]{BGXW}}]\label{nonintegral_case}
			%     Let $L(\lambda)$ be a simple non-integral highest weight module such that $\Delta_{[\lambda]}$ is maximal, then we have
			% \begin{enumerate}
				%     \item If $\fg$ is of type $F_4$, then $\Delta_{[\lambda]}$ can take the following case:
				%     $$C_4, A_2\times\tilde{A_2},A_1\times\tilde{A_3},B_3\times\tilde{A_1}.$$
				
				% \item  If $\fg$ is of type $G_2$, then $\Delta_{[\lambda]}$ can take the following case:
				% $$A_2,A_1\times\tilde{A_1}.$$
				
				% \item  If $\fg$ is of type $E_6$, then $\Delta_{[\lambda]}$ can take the following case:
				% $$D_5,A_5\times\tilde{A_1},A_2\times A_2\times A_2.$$
				
				% \item  If $\fg$ is of type $E_7$, then $\Delta_{[\lambda]}$ can take the following case:
				% $$A_7, E_6, D_6\times A_1, A_5\times A_2, A_3\times A_3\times A_1$$
				
				% \item  If $\fg$ is of type $E_8$, then $\Delta_{[\lambda]}$ can take the following case:
				% $$D_8, A_8, E_7\times A_1,E_6\times A_2, A_7\times A_1, A_4\times A_4, D_5\times A_3, A_5\times A_2\times A_1. $$

				% \end{enumerate}
			
			%\end{Prop}
			If $\lambda\in\fh^*$ is non-integral, we can get an integral weight $\phi(\lambda)$ of type $\phi(\Delta_{[\lambda]})$ by Proposition \ref{integral_root_system}, in this case we have ${\bf a}(w_{\lambda})={\bf a}(w_{\phi(\lambda)})$ and then we can compute the value of ${\bf a}(w_{\phi(\lambda)})$ by RS algorithm and Proposition \ref{a_function} if $\phi(\Delta_{[\lambda]})$ is of classical type and by PyCox and Proposition \ref{a_function} if $\phi(\Delta_{[\lambda]})$ is of exceptional type.
			From now on, we will use $(k_1,k_2,\cdots,k_t)$ to denote  $\lambda=k_1e_1+k_2e_2\cdots+k_te_t$, where $\{e_1,e_2,\cdots,e_n\}$ is a basis for $\fh^*$.
			\begin{Exam}
				Let $\fg=F_4$ and $L(\lambda)$ be the highest weight module of $\fg$ with $\lambda+\rho=(1,2,\frac{3}{2},\frac{7}{2})$, then $\Delta_{[\lambda]}\simeq C_4$ by Proposition \ref{integral_root_system}. Suppose simple system of $C_4$ is $\Pi=\{\beta_1=e_1-e_2,\beta_2=e_2-e_3,\beta_3=e_3-e_4,\beta_4=2e_4\}$. Define a map $\phi :\Delta_{[\lambda]}\to C_4$ by
				$$\phi(e_2)=\beta_1, \phi(\alpha_1)=\beta_2,\phi(\alpha_2)=\beta_3,\phi(\alpha_3)=\beta_4.$$
				Then we have
				$$\lambda+\rho=-2\xi_1+7\xi_2-6\xi_3+4\xi_4,$$
				where $\{\xi_i|1\leq i\leq 4\}$ are the fundamental weights for $\Pi_{[\lambda]}$, and 
				$$\phi(\lambda+\rho)=-2\phi(\xi_1)+7\phi(\xi_2)-6\phi(\xi_3)+4\phi(\xi_4)=(3,-1,5,-2),$$
				which is an integral weight of type $C_4$, we can get that $a(w_{\lambda})=6$ by the RS algorithm. Thus $\operatorname{GKdim} L(\lambda)=|\Delta^+|-{\bf a}(w_{\lambda})=24-6=18$ by \cite{BX1}.
			\end{Exam}

			\section{Reducibility of scalar generalized Verma modules for type $G_2$ and $F_4$}\label{type_gf}
Let $\mathfrak{g}$ be a finite-dimensional complex semisimple Lie
algebra and let $\mathfrak{b}=\mathfrak{h}\oplus\mathop\oplus\limits_{\alpha\in\Delta^+}\fg_\alpha$
be a fixed Borel subalgebra of $\fg$. For a minimal parabolic subalgebra $\fq$, we recall that $\fq$ corresponds to the subsets  $\Pi\setminus\{\alpha_i\}_{i\ne p}$. It is easy to get that  $\lambda=z\eta$ for some $z\in\mathbb{R}$ and $\eta=\sum\limits_{i\neq p}k_i\xi_i$ by Weyl dimension formula, where $k_i\in\mathbb{R}$ and $\xi_i$ is the fundamental weight associated with the simple root $\alpha_i$. In this paper, we suppose that $\eta=\sum\limits_{i\neq p}\xi_i=\widehat{\xi_p}$ and we call the corresponding reducible point a {\it diagonal-reducible point}. 

From Lemma \ref{GKdown}, the set of diagonal-reducible points of  a scalar generalized Verma module $M_I(z\widehat{\xi_p})$ is given in the following diagram:
\vspace{1em}
\begin{center}
\begin{tikzpicture}
    \draw[thick] (-1,0) -- (3,0);
    \coordinate (a) at (3,0);
    \fill (a) circle (2pt) node[below] {$a$};
    \coordinate (c) at (4,0);
    \fill (c) circle (2pt);
     \draw (4,0) -- (4,-0.4);
     \coordinate (d) at (5,0);
    \fill (d) circle (2pt);
    \draw (5,0) -- (5,-0.4);
    \coordinate (b) at (4.5,0);
    %\fill (b) circle (2pt);
    \draw[->, >=Triangle] (4.7,-0.2) -- (5,-0.2);
    \draw[<-, >=Triangle] (4,-0.2) -- (4.3,-0.2);
    \node at (4.5,-0.2) {$b$};
    \node at (5.5,0) {$\ldots$};
    \node at (6,0) {$\ldots$};
\coordinate (e) at (6.5,0);
    \fill (e) circle (2pt);
   \node at (7,0) {$\ldots$};
    \node at (7.5,0) {$\ldots$};  
\end{tikzpicture}
\end{center}
where the diagonal-reducible points starting from $z=a\in \mathbb{R}$ are equally spaced at an interval of length $b$ and can be written in the form $a+b\mathbb{Z}_{\geq 0}$. The point $a$  will be called the {\it first diagonal-reducible point} of $M_I(z\widehat{\xi_p})$.

From  Lemma \ref{GKdown}, we only need to find the first diagonal-reducible point of the scalar generalized Verma module $M_{I}(z\widehat{\xi_p})$. The following lemma plays an important role in the process of determining the first diagonal-reducible point.

\begin{Lem}\label{fundamental_weight}
	Let $\xi_i$ be the fundamental weight of simple root $\alpha_i$ and $\rho$ be half the sum of the members in $\Delta^+$, then $\rho=\sum\limits_{i\geq 1}\xi_i$.
\end{Lem}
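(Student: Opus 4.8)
The plan is to reduce the claimed identity to the single numerical fact that $(\rho,\alpha_j^{\vee})=1$ for every simple root $\alpha_j$, and to extract this in turn from the $W$-action on $\rho$. Recall that the fundamental weights are characterized by $(\xi_i,\alpha_j^{\vee})=\delta_{ij}$ and that they form a basis of $\mathfrak{h}^*$; dually, the simple coroots $\{\alpha_j^{\vee}\}$ span $\mathfrak{h}$ and the pairing $(-,-)$ is non-degenerate. Hence two weights in $\mathfrak{h}^*$ coincide as soon as they agree against every $\alpha_j^{\vee}$. By bilinearity $\bigl(\sum_i\xi_i,\alpha_j^{\vee}\bigr)=\sum_i\delta_{ij}=1$ for each $j$, so proving $\rho=\sum_i\xi_i$ is equivalent to proving $(\rho,\alpha_j^{\vee})=1$ for all simple roots $\alpha_j$.

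To obtain the latter, I would compute $s_j(\rho)$ in two ways, where $s_j$ is the simple reflection attached to $\alpha_j$. The reflection formula gives $s_j(\rho)=\rho-(\rho,\alpha_j^{\vee})\,\alpha_j$. On the other hand, the structural input I would invoke is that $s_j$ permutes the set $\Delta^+\setminus\{\alpha_j\}$ while sending $\alpha_j$ to $-\alpha_j$. Granting this, apply $s_j$ to $\rho=\tfrac{1}{2}\sum_{\alpha\in\Delta^+}\alpha$: all summands other than $\alpha_j$ are merely permuted, while the single term $\alpha_j$ changes sign, so that $s_j(\rho)=\rho-\alpha_j$. Comparing the two expressions forces $(\rho,\alpha_j^{\vee})=1$, which by the previous paragraph completes the proof.

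The one nontrivial ingredient — and the only real obstacle — is the claim that $s_j$ stabilizes $\Delta^+\setminus\{\alpha_j\}$. I would justify it as follows: any positive root $\alpha\neq\alpha_j$ expands as $\alpha=\sum_k c_k\alpha_k$ with all $c_k\geq 0$ and some $c_k>0$ for an index $k\neq j$ (otherwise $\alpha$ would be a nonnegative multiple of $\alpha_j$, hence equal to $\alpha_j$). Since $s_j$ alters only the coefficient of $\alpha_j$ in this expansion, the image $s_j(\alpha)$ retains the coefficient $c_k>0$; because every root has all its simple coefficients of one sign, this makes $s_j(\alpha)$ positive, and clearly $s_j(\alpha)\neq\alpha_j$ as $s_j(\alpha_j)=-\alpha_j$. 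As $s_j$ is an involution, it therefore restricts to a permutation of $\Delta^+\setminus\{\alpha_j\}$. With this permutation property established, the identity $\rho=\sum_{i\geq 1}\xi_i$ follows at once from the argument above.
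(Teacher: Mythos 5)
Your proof is correct and complete. The paper itself does not argue the lemma --- its ``proof'' merely cites Lemma A of \S 13.3 in \cite{Hum78} (with the alternative of checking the identity case by case from the tables in \cite{Knapp}) --- and the argument you supply, reducing $\rho=\sum_i\xi_i$ to the equalities $(\rho,\alpha_j^{\vee})=1$ for all simple $\alpha_j$ and establishing those by computing $s_j(\rho)$ in two ways via the fact that $s_j$ permutes $\Delta^{+}\setminus\{\alpha_j\}$, is precisely the standard proof underlying that citation (your justification of the permutation property, including the implicit use of the root system being reduced to conclude that a positive root supported only on $\alpha_j$ equals $\alpha_j$, is sound).
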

\begin{proof}
It can be found in Lemma A of \S 13.3 in \cite{Hum78}. Of course, we can also draw the conclusion case by case  in \cite{Knapp}. For convenience, from now on we will use Knapp conventions to denote the simple root in type $E$ $F$ and $G$, see \cite[Appendices C.2.]{Knapp} for more details.
\end{proof}

In this section we give the reducibility of scalar generalized Verma modules associated to minimal parabolic subalgebra for type $G_2$ and $F_4$. It's worth noting that in  $G_2$, the case of minimal parabolic type is contrary to maximal parabolic type since there are only two simple roots. In other words,  in the minimal parabolic type, the case of $p=1$ is the same as $p=2$  in the maximal parabolic type.

\begin{Thm}\label{reducible_g2}
	Let $\fg$ be of type $G_2$, then $M_I(z\widehat{\xi_p})$ is reducible if and only if
	\begin{enumerate}
		\item $p=1$, $z\in \{-\frac{1}{2}+\frac{1}{2}\mathbb{Z}_{\geq 0}\}\cup\{-\frac{2}{3}+\frac{1}{3}\mathbb{Z}_{\geq 0}\}$; or
		
		\item $p=2$, $z\in\{-\frac{3}{2}+\mathbb{Z}_{\geq 0} \}\cup\{\mathbb{Z}_{\geq 0}\}$.
	\end{enumerate}
\end{Thm}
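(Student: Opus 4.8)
The plan is to convert the reducibility question into a computation of Lusztig's ${\bf a}$-function and then run through the finitely many integral root systems that can occur. By Lemma~\ref{reducible}, $M_I(z\widehat{\xi_p})$ is reducible if and only if $\operatorname{GKdim}L(z\widehat{\xi_p})\neq\dim(\fu)=5$ (Table~\ref{constants}). Since $L(\lambda)$ is a quotient of $M_I(\lambda)$ we always have $\operatorname{GKdim}L(\lambda)\le\dim(\fu)=5$, while $\operatorname{GKdim}L(\lambda)=|\Delta^+|-{\bf a}(w_\lambda)=6-{\bf a}(w_\lambda)$ forces ${\bf a}(w_\lambda)\ge1$. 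Hence the whole theorem reduces to the single assertion
\[
M_I(z\widehat{\xi_p})\text{ is reducible}\iff {\bf a}(w_\lambda)\ge 2,\qquad \lambda=z\widehat{\xi_p}.
\]

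Using $\rho=\xi_1+\xi_2$ (Lemma~\ref{fundamental_weight}), I would take the labelling in which $\alpha_1$ is short and $\alpha_2$ is long, set $\nu=\lambda+\rho$, and tabulate the six numbers $(\nu,\alpha^\vee)$ over $\alpha\in\Delta^+$ as affine functions of $z$: for $p=1$ these are $\{1,\,z+1,\,3z+4,\,3z+5,\,z+2,\,2z+3\}$ and for $p=2$ they are $\{z+1,\,1,\,z+4,\,2z+5,\,z+2,\,z+3\}$. In both cases $(\nu,\alpha_p^\vee)=1$, which re-confirms that $\nu$ is never antidominant and ${\bf a}(w_\lambda)\ge1$. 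Reading off which pairings lie in $\Z$ and applying Proposition~\ref{integral_root_system} then sorts $z$ into regimes: for $p=1$, $z\in\Z$ gives $\Delta_{[\lambda]}\simeq G_2$, $z\in\tfrac12\Z\setminus\Z$ gives $A_1\times\widetilde A_1$ (the orthogonal pair $\alpha_1$, $3\alpha_1+2\alpha_2$), $z\in\tfrac13\Z\setminus\Z$ gives the short-root $A_2$ with simple system $\{\alpha_1,\alpha_1+\alpha_2\}$, and generic $z$ gives $A_1=\{\pm\alpha_1\}$; for $p=2$ there is no $A_2$ regime, only $G_2$ ($z\in\Z$), $\widetilde A_1\times A_1$ (the pair $\alpha_2$, $2\alpha_1+\alpha_2$, for $z\in\tfrac12\Z\setminus\Z$), and $A_1=\{\pm\alpha_2\}$ generically.

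In each regime I would compute ${\bf a}(w_\lambda)$ inside $W_{[\lambda]}$. For a single $A_1$ factor the relevant pairing is $1>0$, so it is regular dominant and contributes ${\bf a}=1$, giving $\operatorname{GKdim}=5$ (irreducible), as expected for generic $z$. For the orthogonal $A_1\times A_1$ regimes I would use additivity of ${\bf a}$ (Proposition~\ref{a_function}(7)): the answer is $1+1=2$ exactly when the second pairing ($2z+3$ for $p=1$, $2z+5$ for $p=2$) is strictly positive, and $1+0=1$ otherwise. For the short-root $A_2$ I would feed the $\mathfrak{sl}_3$-coordinates $(3z+5,\,3z+4,\,0)$ of $\nu$ into the RS algorithm and read ${\bf a}$ off the shape, obtaining ${\bf a}=3$ precisely when this triple is strictly decreasing (i.e.\ $z>-\tfrac43$) and ${\bf a}=1$ otherwise. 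For the full $G_2$ regime I would invoke Proposition~\ref{integral_case} together with the dihedral two-sided cell structure of $W(G_2)$, whose only ${\bf a}$-values are $0,1,6$ (cf.\ Proposition~\ref{a_function}(4) for ${\bf a}(w_0)=6$), to conclude ${\bf a}=6$ exactly when $\lambda$ is dominant integral ($z\ge0$) and ${\bf a}=1$ for $z\le-1$.

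Finally I would assemble the reducible locus, using Lemma~\ref{GKdown} to streamline the bookkeeping: since $\operatorname{GKdim}L((z+1)\widehat{\xi_p})\le\operatorname{GKdim}L(z\widehat{\xi_p})$, reducibility is upward-closed along each coset $z_0+\Z$, so it suffices to find the first reducible point in each relevant congruence class and adjoin $\Z_{\ge0}$. For $p=1$ the first points are $z=-\tfrac12$ (half-integers), $z=-\tfrac23$ and $z=-\tfrac13$ (the two cosets of thirds), and $z=0$ (integers), which combine to $\{-\tfrac12+\tfrac12\Z_{\ge0}\}\cup\{-\tfrac23+\tfrac13\Z_{\ge0}\}$; for $p=2$ they are $z=-\tfrac32$ (half-integers) and $z=0$ (integers), giving $\{-\tfrac32+\Z_{\ge0}\}\cup\{\Z_{\ge0}\}$. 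I expect the main obstacle to be precisely the boundary computations that pin these thresholds down, namely the values of $z$ at which $\nu$ becomes singular inside $\Delta_{[\lambda]}$ (such as $z=-\tfrac32,-\tfrac43,-\tfrac53$ for $p=1$): there a pairing vanishes, the ${\bf a}$-value drops, and the point is \emph{not} reducible, so settling these cases correctly---especially the tie-breaking in the RS insertion for the $A_2$ regime, where two equal coordinates occur---is what produces the exact first points $-\tfrac12,-\tfrac23,-\tfrac32$ rather than the naive neighbouring lattice points.
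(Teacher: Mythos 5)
Your proposal is correct and takes essentially the same route as the paper: reduce via Lemma~\ref{reducible} and $\operatorname{GKdim}L(\lambda)=|\Delta^+|-{\bf a}(w_\lambda)$ to the computation of ${\bf a}(w_\lambda)$, split $z$ into integrality regimes with $\Delta_{[\lambda]}\simeq G_2$, $A_1\times\widetilde{A_1}$, short-root $A_2$, or $A_1$ (your pairing tables, regime decompositions, and thresholds $-\frac12,-\frac23,-\frac13,-\frac32$ all agree with the paper's computations), and assemble with Lemma~\ref{GKdown}. The only differences are in execution rather than substance: you settle each regime uniformly in $z$ by a sign analysis of the pairings, and you make explicit the dihedral fact that ${\bf a}$ takes only the values $0,1,6$ on $W(G_2)$ --- which is what the paper implicitly relies on at the singular dominant integral points such as $z=-1$, where Proposition~\ref{integral_case} alone does not directly yield ${\bf a}=1$ --- whereas the paper instead evaluates two sample points per congruence class and propagates by Lemma~\ref{GKdown}.
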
	
\begin{proof}
	
	{\bf Step 1.} Let $\Delta^+(\fl)=\{\alpha_1\}=\{(1,-2,0)\}$. When $M_I(\lambda)$ is of scalar type, we know that $\lambda=z\xi_2$ for some $z\in\mathbb{R}$, where $\xi_2$ is the fundamental weight of simple root $\alpha_2$ and $\xi_2=(-1,-1,2)$.
	
	In \cite{Knapp} we know that $\rho=(-1,-2,3)$ and $\lambda+\rho=(-z-1,-z-2,2z+3)$.
	\begin{enumerate}
		\item If $\lambda$ is integral, then we have the following.
		\begin{enumerate}
			\item When $z=0$, then $\lambda+\rho=(-1,-2,3)$, which is dominant integral, by Proposition \ref{integral_case} we can get that $\operatorname{GKdim} L(\lambda)=0<\dim(\fu)$.
			
			By Lemma \ref{reducible} we can obtain that $z=0$ is a diagonal-reducible point. 
			
			\item When $z=-1$, then $\lambda+\rho=(0,-1,1)$, one can easily check that $\lambda+\rho$ is not dominant or anti-dominant, by Proposition \ref{integral_case} we can get that $\operatorname{GKdim} L(\lambda)=5=\dim(\fu)$.
			
			By Lemma \ref{reducible} we can obtain that $z=-1$ is an irreducible point. 
		\end{enumerate}
		\item If $\lambda$ is half-integral, one can easily check that $\Delta_{[\lambda]}=\Delta_1\bigcup\Delta_2\simeq  A_1\times \widetilde{A_1}$. The simple system of $\Delta_1$ is $\Pi_1=\{\beta_1=(-1,-1,2)\}$, and the simple system of $\Delta_2$ is $\Pi_2=\{\gamma_1=(1,-1,0)\}$. Suppose the simple system of $A_1$ is $\{\alpha_1=(1,-1,0)\}$, and the simple system of $\widetilde{A_1} $ is $\{\alpha_1^\prime=(1,-1,0)\}$. We define a map $\phi:\Delta_{[\lambda]}\to A_1\times \widetilde{A_1}$ such that $\phi(\beta_1)=\alpha_1,\phi(\gamma_1)=\alpha_1^\prime$, and we can have $\phi(\lambda+\rho)\mid_{A_1}=(\frac{1}{4}(2z+3),-\frac{1}{4}(2z+3))$, $\phi(\lambda+\rho)\mid_{\widetilde{A_1}}=(\frac{1}{2},-\frac{1}{2})$. We will have the following.
		
		\begin{enumerate}
			\item When $z=-\frac{1}{2}$, then $\lambda+\rho=(-\frac{1}{2},-\frac{3}{2},2)$. $\phi(\lambda+\rho)\mid_{A_1}=(\frac{1}{2},-\frac{1}{2})$, $\phi(\lambda+\rho)\mid_{\widetilde{A_1}}=(\frac{1}{2},-\frac{1}{2})$. By using RS algorithm, we have $a(w_{\lambda})=2$ and $\operatorname{GKdim} L(\lambda)=4<\dim(\fu)$.
			
			By Lemma \ref{reducible} we can obtain that $z=-\frac{1}{2}$ is a diagonal-reducible point. 
			
			\item  When $z=-\frac{3}{2}$, then $\lambda+\rho=(\frac{1}{2},-\frac{1}{2},0)$. $\phi(\lambda+\rho)\mid_{A_1}=(0,0)$, $\phi(\lambda+\rho)\mid_{\widetilde{A_1}}=(\frac{1}{2},-\frac{1}{2})$. By using RS algorithm, we have $a(w_{\lambda})=1$ and $\operatorname{GKdim} L(\lambda)=5=\dim(\fu)$.
			
			By Lemma \ref{reducible} we can obtain that $z=-\frac{3}{2}$ is an irreducible point. 
			
		\end{enumerate}
		\item If $\lambda$ is one-third-integral, one can easily check that $\Delta_{[\lambda]}\simeq  A_2$. The simple system of $\Delta_{[\lambda]}$ is $\Pi=\{\beta_1=(1,-1,0),\beta_2=(-1,0,1)\}$. Suppose the simple system of $A_2$ is $\{\alpha_1=(1,-1,0),\alpha_2=(0,1,-1)\}$. We define a map $\phi:\Delta_{[\lambda]}\to A_2$ such that $\phi(\beta_1)=\alpha_1,\phi(\beta_2)=\alpha_2$, and we can have $\phi(\lambda+\rho)=(z+2,z+1,-2z-3)$. We will have the following.
		
		\begin{enumerate}
			\item When $z=-\frac{2}{3}$, then $\lambda+\rho=(-\frac{1}{3},-\frac{4}{3},\frac{5}{3})$. $\phi(\lambda+\rho)=(\frac{4}{3},\frac{1}{3},-\frac{5}{3})$. By using RS algorithm, we have $a(w_{\lambda})=3$ and $\operatorname{GKdim} L(\lambda)=3<\dim(\fu)$.
			
			By Lemma \ref{reducible} we can obtain that $z=-\frac{2}{3}$ is a diagonal-reducible point. 
			
			\item When $z=-\frac{5}{3}$, then $\lambda+\rho=(\frac{2}{3},-\frac{1}{3},-\frac{1}{3})$. $\phi(\lambda+\rho)=(\frac{1}{3},-\frac{2}{3},\frac{1}{3})$. By using RS algorithm, we have $a(w_{\lambda})=1$ and $\operatorname{GKdim} L(\lambda)=5=\dim(\fu)$.
			
			By Lemma \ref{reducible} we can obtain that $-\frac{5}{3}$ is an irreducible point. 
			
			\item  When $z=-\frac{1}{3}$, then $\lambda+\rho=(-\frac{2}{3},-\frac{5}{3},\frac{7}{3})$. $\phi(\lambda+\rho)=(\frac{5}{3},\frac{2}{3},-\frac{7}{3})$. By using RS algorithm, we have $a(w_{\lambda})=3$ and $\operatorname{GKdim} L(\lambda)=3<\dim(\fu)$.
			
			By Lemma \ref{reducible} we can obtain that $z=-\frac{1}{3}$ is a diagonal-reducible point. 
			
			\item When $z=-\frac{4}{3}$, then $\lambda+\rho=(\frac{1}{3},-\frac{2}{3},\frac{1}{3})$. $\phi(\lambda+\rho)=(\frac{2}{3},-\frac{1}{3},-\frac{1}{3})$. By using RS algorithm, we have
			$a(w_{\lambda})=1$ and $\operatorname{GKdim} L(\lambda)=5=\dim(\fu)$.
			
			By Lemma \ref{reducible} we can obtain that $z=-\frac{4}{3}$ is an irreducible point. 
			
		\end{enumerate}
		
		\item If $\lambda$ is non-integral, non-half-integral and non-one-third-integral, one can easily check that $\Delta_{[\lambda]}\simeq  A_1$. The simple system of $\Delta_{[\lambda]}$ is $\Pi=\{\beta_1=e_1-e_2\}$. We can get $a(w_{\lambda})=1$ by using RS algorithm, thus $\operatorname{GKdim} L(\lambda)=5=\dim(\fu)$. In this case $M_I(z\widehat{\xi_1})$ is irreducible.
		
		All in all, when $p=1$, $ M_I(z\widehat{\xi_1})$ is reducible if and only if
		\begin{align*}
			z&\in \{-\frac{1}{2}+\mathbb{Z}_{\geq 0}\}\cup \{\mathbb{Z}_{\geq 0}\}\cup\{-\frac{2}{3}+\mathbb{Z}_{\geq 0}\}\cup\{-\frac{1}{3}+\mathbb{Z}_{\geq 0}\}\\&=\{-\frac{1}{2}+\frac{1}{2}\mathbb{Z}_{\geq 0}\}\cup\{-\frac{2}{3}+\frac{1}{3}\mathbb{Z}_{\geq 0}\}.
		\end{align*}
	\end{enumerate}
	
	{\bf Step 2.} Next we consider the case when $p=2$. Let $\Delta^+(\fl)=\{\alpha_2\}=\{(-2,1,1)\}$. When $M_I(z\widehat{\xi_p})$ is of scalar type, we know that $\lambda=z\xi_1$ for some $z\in\mathbb{R}$, where $\xi_1$ is the fundamental weight of simple root $\alpha_1$ and $\xi_1=(0,-1,1)$. 
	In \cite{Knapp} we know that $\rho=(-1,-2,3)$ and $\lambda+\rho=(-1,-z-2,z+3)$.
	\begin{enumerate}
		\item If $\lambda$ is integral, then we have the following.
		\begin{enumerate}
			\item When $z=0$, then $\lambda+\rho=(-1,-2,3)$, which is dominant integral, by Proposition \ref{integral_case} we can get that $\operatorname{GKdim} L(\lambda)=0<\dim(\fu)$.
			
			By Lemma \ref{reducible} we can obtain that $z=0$ is a diagonal-reducible point. 
			
			\item When $z=-1$, then $\lambda+\rho=(-1,-1,2)$, one can easily check that $\lambda+\rho$ is not dominant or anti-dominant, by Proposition \ref{integral_case} we can get that $\operatorname{GKdim} L(\lambda)=5=\dim(\fu)$.
			
			By Lemma \ref{reducible} we can obtain that $z=-1$ is an irreducible point. 
		\end{enumerate}
		\item If $\lambda$ is half-integral, one can easily check that $\Delta_{[\lambda]}=\Delta_1\bigcup\Delta_2\simeq  A_1\times \widetilde{A_1}$. The simple system of $\Delta_1$ is $\Pi_1=\{\beta_1=(-2,1,1)\}$, and the simple system of $\Delta_2$ is $\Pi_2=\{\gamma_1=(0,-1,1)\}$. Suppose the simple system of $A_1$ is $\{\alpha_1=(1,-1,0)\}$, and the simple system of $\widetilde{A_1}$ is $\{\alpha_1^\prime=(1,-1,0)\}$. We define a map $\phi:\Delta_{[\lambda]}\to A_1\times\widetilde{A_1}$ such that $\phi(\beta_1)=\alpha_1,\phi(\gamma_1)=\alpha_1^\prime$, and we can have $\phi(\lambda+\rho)\mid_{A_1}=(\frac{1}{2},-\frac{1}{2})$, $\phi(\lambda+\rho)\mid_{\widetilde{A_1}}=(z+\frac{5}{2},-z-\frac{5}{2})$.then we have the following.
		\begin{enumerate}
			\item When $z=-\frac{3}{2}$, then $\lambda+\rho=(-1,-\frac{1}{2},\frac{3}{2})$. $\phi(\lambda+\rho)\mid_{A_1}=(\frac{1}{2},-\frac{1}{2})$, $\phi(\lambda+\rho)\mid_{\widetilde{A_1}}=(1,-1)$. By using RS algorithm, we have $a(w_{\lambda})=2$ and $\operatorname{GKdim} L(\lambda)=4<\dim(\fu)$.
			
			By Lemma \ref{reducible} we can obtain that $z=-\frac{3}{2}$ is a diagonal-reducible point. 
			
			\item  When $z=-\frac{5}{2}$, then $\lambda+\rho=(-1,\frac{1}{2},\frac{1}{2})$. $\phi(\lambda+\rho)\mid_{A_1}=(\frac{1}{2},-\frac{1}{2})$, $\phi(\lambda+\rho)\mid_{\widetilde{A_1}}=(0,0)$. By using RS algorithm, we have $a(w_{\lambda})=1$ and $\operatorname{GKdim} L(\lambda)=5=\dim(\fu)$.
			
			By Lemma \ref{reducible} we can obtain that $z=-\frac{5}{2}$ is an irreducible point. 
			
		\end{enumerate}
		\item If $\lambda$ is non-integral and non-half-integral, one can easily check that $\Delta_{[\lambda]}\simeq  A_1$. The simple system of $\Delta_{[\lambda]}$ is $\Pi=\{\beta_1=(-2,1,1)\}$. We can get $a(w_{\lambda})=1$ by using RS algorithm, thus $\operatorname{GKdim} L(\lambda)=5=\dim(\fu)$. In this case $M_I(z\widehat{\xi_2})$ is irreducible.

		All in all, when $p=2$, $ M_I(z\widehat{\xi_2})$ is reducible if and only if
		\begin{align*}
			z&\in \{-\frac{3}{2}+\mathbb{Z}_{\geq 0}\}\cup \{\mathbb{Z}_{\geq 0}\}.
		\end{align*}
		
	\end{enumerate}
	
	So far, we have completed the proof of all cases of  type $G_2$.
\end{proof}

\begin{Thm}\label{reducible_f4}
	Let $\fg$ be of type $F_4$, then $M_I(z\widehat{\xi_p})$ is reducible if and only if
	\begin{enumerate}
		\item $p=1$, $z\in \bigcup\limits_{i=5}^{9}\{-1+\frac{1}{i}\mathbb{Z}_{\geq 0}\}$; or
		
		\item $p=2$, $z \in \bigcup\limits_{i=5}^{8}\{-\frac{i+1}{i}+\frac{1}{i}\mathbb{Z}_{\geq 0}\}$; or
		
		\item $p=3$, $z \in \bigcup\limits_{i=4}^{5}\{-\frac{i+1}{i}+\frac{1}{i}\mathbb{Z}_{\geq 0}\}\cup\bigcup\limits_{i=6}^{7}\{-\frac{i+3}{i}+\frac{1}{i}\mathbb{Z}_{\geq 0}\}$; or
		
		\item $p=4$, $z\in\bigcup\limits_{i=5}^{6}\{-1+\frac{1}{i}\mathbb{Z}_{\geq 0}\}\cup\bigcup\limits_{i=7}^{9}\{-\frac{i+1}{i}+\frac{1}{i}\mathbb{Z}_{\geq 0}\}$.
	\end{enumerate}
\end{Thm}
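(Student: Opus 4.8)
The plan is to run, for each of the four choices of the distinguished node $p\in\{1,2,3,4\}$, exactly the machinery used in the proof of Theorem \ref{reducible_g2}, only now inside $F_4$. By Lemma \ref{reducible} the module $M_I(z\widehat{\xi_p})$ is reducible precisely when $\operatorname{GKdim}L(\lambda)<\dim(\fu)=23$, and since $\operatorname{GKdim}L(\lambda)=|\Delta^+|-{\bf a}(w_\lambda)=24-{\bf a}(w_\lambda)$, this is equivalent to ${\bf a}(w_\lambda)\geq 2$. Thus the whole theorem reduces to deciding, for every real $z$, whether ${\bf a}(w_\lambda)\geq 2$. Moreover Lemma \ref{GKdown} shows that reducibility is preserved under $z\mapsto z+1$, so it is enough to locate, inside each arithmetic class of $z$ (each residue modulo $1$ with a fixed denominator), the smallest reducible value; the reducible points of that class are then $z_0+\mathbb{Z}_{\geq 0}$, and the various step-$1$ progressions coming from a common denominator $i$ will amalgamate into the single step-$\tfrac1i$ progressions recorded in the statement.

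First I would fix the Knapp coordinates for $F_4$ and, using $\rho=\sum_i\xi_i$ from Lemma \ref{fundamental_weight}, write
$$\lambda+\rho=\xi_p+(z+1)\widehat{\xi_p},$$
so that $(\lambda+\rho,\alpha_p^\vee)=1$ while $(\lambda+\rho,\alpha_j^\vee)=z+1$ for $j\neq p$. Converting this to $e$-coordinates lets me read off, for each positive coroot $\alpha^\vee$, the linear function $z\mapsto(\lambda+\rho,\alpha^\vee)$, whose integrality governs the shape of $\Delta_{[\lambda]}$: as $z$ runs through the rationals with a given denominator, a definite set of roots becomes integral. I would then split into cases according to the denominator of $z$ (integral, half-, third-, quarter-integral, and so on up to the largest relevant denominator), and in each case apply Proposition \ref{integral_root_system} to name $\Delta_{[\lambda]}$ and its simple system $\Pi_{[\lambda]}$. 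Because $F_4$ carries both long and short roots, the subsystems that arise are varied --- $B_4$, $C_4$, products such as $B_3\times A_1$ or $A_2\times\widetilde{A_2}$, and smaller pieces --- and one must use the cardinality-plus-root-length test of Proposition \ref{integral_root_system} to distinguish, in particular, $B$ from $C$.

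With $\Delta_{[\lambda]}$ identified I would build the isomorphism $\phi:\Delta_{[\lambda]}\to\phi(\Delta_{[\lambda]})$ onto a standard root system, compute $\phi(\lambda+\rho)$ in the standard coordinates, and evaluate ${\bf a}(w_\lambda)={\bf a}(w_{\phi(\lambda)})$: for classical components by the RS algorithm on the coordinate sequence of $\phi(\lambda+\rho)$ (the shape of the output tableau determines ${\bf a}$), for exceptional components by PyCox, and for products by the additivity of ${\bf a}$ in Proposition \ref{a_function}(7). Recording in each arithmetic class the first $z$ with ${\bf a}(w_\lambda)\geq 2$, then forming the step-$1$ progressions guaranteed by Lemma \ref{GKdown} and merging those with a common denominator, produces the union of progressions in each of the four parts.

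I expect the main obstacle to be the correct determination of the integral root system $\Delta_{[\lambda]}$ in every arithmetic class --- in particular keeping track of which short and which long roots become integral, so as to name the right classical or exceptional type (and to separate $B_n$ from $C_n$, whose ${\bf a}$-values differ). Once the type and the vector $\phi(\lambda+\rho)$ are pinned down, the evaluation of ${\bf a}(w_\lambda)$ by RS or PyCox is routine; the remaining work is the bookkeeping needed to see that the finitely many first reducible points in each case combine into exactly the stated progressions, and to confirm that the denominators $i$ occurring for each $p$ are precisely those listed.
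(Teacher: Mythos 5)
Your proposal follows essentially the same route as the paper's proof: reduce reducibility to the criterion $\operatorname{GKdim}L(\lambda)=24-{\bf a}(w_\lambda)<23=\dim(\fu)$ via Lemma \ref{reducible}, stratify $z$ by denominator, identify $\Delta_{[\lambda]}$ and its simple system through Proposition \ref{integral_root_system} (including the $B$ versus $C$ distinction and the short-root decoration), transport $\lambda+\rho$ by $\phi$, evaluate ${\bf a}(w_\lambda)$ by RS insertion on classical components (PyCox in the integral case), and assemble the progressions using the monotonicity of Lemma \ref{GKdown} in both directions. The only difference is one of completeness rather than method: the paper carries out the finitely many case computations for each $p$ and each denominator explicitly, which your outline correctly anticipates but does not execute.
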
	
\begin{proof}
	
	{\bf Step 1.} Let $\Delta^+(\fl)=\{\alpha_1\}=\{(\frac{1}{2},-\frac{1}{2},-\frac{1}{2},-\frac{1}{2})\}$. When $M_I(\lambda)$ is of scalar type, we know that $\lambda=z(\xi_2+\xi_3+\xi_4)=z(\rho-\xi_1)$ for some $z\in\mathbb{R}$, where $\xi_1$ is the fundamental weight of simple root $\alpha_1$ and $\xi_1=(1,0,0,0)$.
	
	In \cite{Knapp} we know that $\rho=(\frac{11}{2},\frac{5}{2},\frac{3}{2},\frac{1}{2})$ and $\lambda+\rho=(\frac{11}{2}+\frac{9}{2}z,\frac{5}{2}+\frac{5}{2}z,\frac{3}{2}+\frac{3}{2}z,\frac{1}{2}+\frac{1}{2}z)$.
	\begin{enumerate}
		\item If $\lambda$ is integral, then we have the following.
		\begin{enumerate}
			\item When $z=-1$, then $\lambda+\rho=(1,0,0,0)$, we can get $w_\lambda=[0, 1, 2, 1, 3, 2, 1, 0,$\\ $ 1, 2, 1, 3, 2, 1, 0]$. By using PyCox we can get that $\operatorname{GKdim} L(\lambda)=21<\dim(\fu)$.
			
			By Lemma \ref{reducible} we can obtain that $z=-1$ is a diagonal-reducible point. 
			\item When $z=-2$, then $\lambda+\rho=(-\frac{7}{2},-\frac{5}{2},-\frac{3}{2},-\frac{1}{2})$, we can get $w_\lambda=[0]$. By using PyCox we can get that $\operatorname{GKdim} L(\lambda)=23=\dim(\fu)$.
			
			By Lemma \ref{reducible} we can obtain that $z=-2$ is an irreducible point. 
			
		\end{enumerate}
		%%%z=1/2
		%%%phi_lambda与2个类别的李代数同构
		%%isomorphism[1 0 2]
		%%isomorphism[0]
		%%a函数的集合;[10]
		\item If $\lambda$ is half-integral, we can easily check that $\Delta_{[\lambda]}=\Delta_1\bigcup\Delta_2$, where the simple system of $\Delta_1$ is $\Pi_1=\{\beta_1=(0,0,1,1),\beta_2=(0,1,0,-1),\beta_3=(\frac{1}{2},-\frac{1}{2},-\frac{1}{2},-\frac{1}{2})\}$, and the simple system of $\Delta_2$ is $\Pi_2=\{\gamma_1=(\frac{1}{2},\frac{1}{2},-\frac{1}{2},\frac{1}{2})\}$. The Dynkin diagram of $\Delta_{[\lambda]}$ is:
		\begin{Large}
			\begin{center}
				$\dynkin[labels={\beta_2,\beta_1,\beta_3}]B3 \ \ \ \ \dynkin[labels={\gamma_1}]A1$
			\end{center}
		\end{Large}
		and with the comparison with Dynkin diagrams of classical Lie algebra, we can easily get the isomorphism $\phi:\Delta_{[\lambda]}\to B_3\times \widetilde{ A_1}$. Then $\phi(\lambda+\rho)|_{B_3}=(4z+\frac{9}{2},2z+\frac{5}{2},\frac{1}{2})$, $\phi(\lambda+\rho)|_{\widetilde{ A_1}}=(3z+\frac{7}{2},-3z-\frac{7}{2})$. By using RS algorithm, we can easily get that:
		\begin{enumerate}
			\item When $z=-\frac{1}{2}$, $a(w_{\lambda})=10$, and $\operatorname{GKdim} L(\lambda)=14<\dim(\fu)$. By Lemma \ref{reducible}, we can obtain that this point is an reducible point.
			\item When $z=-\frac{3}{2}$, $a(w_{\lambda})=1$, and $\operatorname{GKdim} L(\lambda)=23=\dim(\fu)$. By Lemma \ref{reducible}, we can obtain that this point is an irreducible point.
		\end{enumerate} 
		%%%z=1/2结束
		%%%z=1/3
		%%%phi_lambda与1个类别的李代数同构
		%%种类数只有一种
		%%isomorphism[0 2 1]
		%%a函数的集合;[9 9]
		\item If $\lambda$ is one-third-integral, we can easily check that the simple system of $\Delta_{[\lambda]}$ is $\Pi=\{\beta_1=(0,0,1,0),\beta_2=(0,1,0,1),\beta_3=(\frac{1}{2},-\frac{1}{2},-\frac{1}{2},-\frac{1}{2})\}$. The Dynkin diagram of $\Delta_{[\lambda]}$ is:
		\begin{Large}
			\begin{center}
				$\dynkin[labels={\beta_1,\beta_3,\beta_2}]C3$
			\end{center}
		\end{Large}
		and with the comparison with Dynkin diagrams of classical Lie algebra, we can easily get the isomorphism $\phi:\Delta_{[\lambda]}\to C_3$. Then $\phi(\lambda+\rho)|_{C_3}=(6z+7,3z+4,3z+3)$. By using RS algorithm, we can easily get that:
		\begin{enumerate}
			\item When $z\in\{-\frac{2}{3},-\frac{1}{3}\}$, $a(w_{\lambda})=9$, and $\operatorname{GKdim} L(\lambda)=15<\dim(\fu)$. By Lemma \ref{reducible}, we can obtain that these points are reducible points.
			\item When $z\in\{-\frac{5}{3},-\frac{4}{3}\}$, $a(w_{\lambda})=1$, and $\operatorname{GKdim} L(\lambda)=23=\dim(\fu)$. By Lemma \ref{reducible}, we can obtain that these points are irreducible points.
		\end{enumerate} 
		%%%z=1/3结束
		%%%z=1/4
		%%%phi_lambda与2个类别的李代数同构
		%%isomorphism[0 1]
		%%isomorphism[0]
		%%a函数的集合;[5 5]
		\item If $\lambda$ is one-fourth-integral, we can easily check that $\Delta_{[\lambda]}=\Delta_1\bigcup\Delta_2$, where the simple system of $\Delta_1$ is $\Pi_1=\{\beta_1=(0,1,1,0),\beta_2=(\frac{1}{2},-\frac{1}{2},-\frac{1}{2},-\frac{1}{2})\}$, and the simple system of $\Delta_2$ is $\Pi_2=\{\gamma_1=(\frac{1}{2},-\frac{1}{2},\frac{1}{2},\frac{1}{2})\}$. The Dynkin diagram of $\Delta_{[\lambda]}$ is:
		\begin{Large}
			\begin{center}
				$\dynkin[labels={\beta_1,\beta_2}]B2 \ \ \ \ \dynkin[labels={\gamma_1}]A1$
			\end{center}
		\end{Large}
		and with the comparison with Dynkin diagrams of classical Lie algebra, we can easily get the isomorphism $\phi:\Delta_{[\lambda]}\to B_2\times \widetilde{ A_1}$. Then $\phi(\lambda+\rho)|_{B_2}=(4z+\frac{9}{2},\frac{1}{2})$, $\phi(\lambda+\rho)|_{\widetilde{ A_1}}=(2z+\frac{5}{2},-2z-\frac{5}{2})$. By using RS algorithm, we can easily get that:
		\begin{enumerate}
			\item When $z\in\{-\frac{3}{4},-\frac{1}{4}\}$, $a(w_{\lambda})=5$, and $\operatorname{GKdim} L(\lambda)=19<\dim(\fu)$. By Lemma \ref{reducible}, we can obtain that these points are reducible points.
			\item When $z\in\{-\frac{7}{4},-\frac{5}{4}\}$, $a(w_{\lambda})=1$, and $\operatorname{GKdim} L(\lambda)=23=\dim(\fu)$. By Lemma \ref{reducible}, we can obtain that these points are irreducible points.
		\end{enumerate} 
		%%%z=1/4结束
		%%%z=1/5
		%%%phi_lambda与2个类别的李代数同构
		%%isomorphism[0 1]
		%%isomorphism[0]
		%%a函数的集合;[4 4 4 4]
		\item If $\lambda$ is one-fifth-integral, we can easily check that $\Delta_{[\lambda]}=\Delta_1\bigcup\Delta_2$, where the simple system of $\Delta_1$ is $\Pi_1=\{\beta_1=(0,1,0,0),\beta_2=(\frac{1}{2},-\frac{1}{2},-\frac{1}{2},-\frac{1}{2})\}$, and the simple system of $\Delta_2$ is $\Pi_2=\{\gamma_1=(1,0,0,1)\}$. The Dynkin diagram of $\Delta_{[\lambda]}$ is:
		\begin{Large}
			\begin{center}
				$\dynkin[labels={\beta_1,\beta_2}]A2 \ \ \ \ \dynkin[labels={\gamma_1}]A1$
			\end{center}
		\end{Large}
		and with the comparison with Dynkin diagrams of classical Lie algebra, we can easily get the isomorphism $\phi:\Delta_{[\lambda]}\to \widetilde{ A_2}\times A_1$. Then $\phi(\lambda+\rho)|_{\widetilde{ A_2}}=(\frac{10}{3}z+\frac{11}{3},-\frac{5}{3}z-\frac{4}{3},-\frac{5}{3}z-\frac{7}{3})$, $\phi(\lambda+\rho)|_{A_1}=(\frac{5}{2}z+3,-\frac{5}{2}z-3)$. By using RS algorithm, we can easily get that:
		\begin{enumerate}
			\item When $z\in\{-\frac{4}{5},-\frac{3}{5},-\frac{2}{5},-\frac{1}{5}\}$, $a(w_{\lambda})=4$, and $\operatorname{GKdim} L(\lambda)=20<\dim(\fu)$. By Lemma \ref{reducible}, we can obtain that these points are reducible points.
			\item When $z\in\{-\frac{9}{5},-\frac{8}{5},-\frac{7}{5},-\frac{6}{5}\}$, $a(w_{\lambda})=1$, and $\operatorname{GKdim} L(\lambda)=23=\dim(\fu)$. By Lemma \ref{reducible}, we can obtain that these points are irreducible points.
		\end{enumerate} 
		%%%z=1/5结束
		%%%z=1/6
		%%%phi_lambda与3个类别的李代数同构
		%%isomorphism[0]
		%%isomorphism[0]
		%%isomorphism[0]
		%%a函数的集合;[3 3]
		\item If $\lambda$ is one-sixth-integral, we can easily check that $\Delta_{[\lambda]}=\Delta_1\bigcup\Delta_2\bigcup\Delta_3$, where the simple system of $\Delta_1$ is $\Pi_1=\{\beta_1=(1,0,1,0)\}$, and the simple system of $\Delta_2$ is $\Pi_2=\{\gamma_1=(\frac{1}{2},\frac{1}{2},-\frac{1}{2},\frac{1}{2})\}$, and the simple system of $\Delta_3$ is $\Pi_3=\{\delta_1=(\frac{1}{2},-\frac{1}{2},-\frac{1}{2},-\frac{1}{2})\}$. The Dynkin diagram of $\Delta_{[\lambda]}$ is:
		\begin{Large}
			\begin{center}
				$\dynkin[labels={\beta_1}]A1 \ \ \ \ \dynkin[labels={\gamma_1}]A1 \ \ \ \ \dynkin[labels={\delta_1}]A1$
			\end{center}
		\end{Large}
		and with the comparison with Dynkin diagrams of classical Lie algebra, we can easily get the isomorphism $\phi:\Delta_{[\lambda]}\to A_1\times \widetilde{ A_1}\times \widetilde{ A_1}$. Then $\phi(\lambda+\rho)|_{A_1}=(3z+\frac{7}{2},-3z-\frac{7}{2})$, $\phi(\lambda+\rho)|_{\widetilde{ A_1}}=(3z+\frac{7}{2},-3z-\frac{7}{2})$, $\phi(\lambda+\rho)|_{\widetilde{ A_1}}=(\frac{1}{2},-\frac{1}{2})$. By using RS algorithm, we can easily get that:
		\begin{enumerate}
			\item When $z\in\{-\frac{5}{6},-\frac{1}{6}\}$, $a(w_{\lambda})=3$, and $\operatorname{GKdim} L(\lambda)=21<\dim(\fu)$. By Lemma \ref{reducible}, we can obtain that these points are reducible points.
			\item When $z\in\{-\frac{11}{6},-\frac{7}{6}\}$, $a(w_{\lambda})=1$, and $\operatorname{GKdim} L(\lambda)=23=\dim(\fu)$. By Lemma \ref{reducible}, we can obtain that these points are irreducible points.
		\end{enumerate} 
		%%%z=1/6结束
		%%%z=1/7
		%%%phi_lambda与2个类别的李代数同构
		%%isomorphism[0]
		%%isomorphism[0]
		%%a函数的集合;[2 2 2 2 2 2]
		\item If $\lambda$ is one-seventh-integral, we can easily check that $\Delta_{[\lambda]}=\Delta_1\bigcup\Delta_2$, where the simple system of $\Delta_1$ is $\Pi_1=\{\beta_1=(1,1,0,0)\}$, and the simple system of $\Delta_2$ is $\Pi_2=\{\gamma_1=(\frac{1}{2},-\frac{1}{2},-\frac{1}{2},-\frac{1}{2})\}$. The Dynkin diagram of $\Delta_{[\lambda]}$ is:
		\begin{Large}
			\begin{center}
				$\dynkin[labels={\beta_1}]A1 \ \ \ \ \dynkin[labels={\gamma_1}]A1$
			\end{center}
		\end{Large}
		and with the comparison with Dynkin diagrams of classical Lie algebra, we can easily get the isomorphism $\phi:\Delta_{[\lambda]}\to A_1\times \widetilde{ A_1}$. Then $\phi(\lambda+\rho)|_{ A_1}=(\frac{7}{2}z+4,-\frac{7}{2}z-4)$, $\phi(\lambda+\rho)|_{\widetilde{ A_1}}=(\frac{1}{2},-\frac{1}{2})$. By using RS algorithm, we can easily get that:
		\begin{enumerate}
			\item When $z\in\{-\frac{6}{7},-\frac{5}{7},-\frac{4}{7},-\frac{3}{7},-\frac{2}{7},-\frac{1}{7}\}$, $a(w_{\lambda})=2$, and $\operatorname{GKdim} L(\lambda)=22<\dim(\fu)$. By Lemma \ref{reducible}, we can obtain that these points are reducible points.
			\item When $z\in\{-\frac{13}{7},-\frac{12}{7},-\frac{11}{7},-\frac{10}{7},-\frac{9}{7},-\frac{8}{7}\}$, $a(w_{\lambda})=1$, and $\operatorname{GKdim} L(\lambda)=23=\dim(\fu)$. By Lemma \ref{reducible}, we can obtain that these points are irreducible points.
		\end{enumerate} 
		%%%z=1/7结束
		%%%z=1/8
		%%%phi_lambda与2个类别的李代数同构
		%%isomorphism[0]
		%%isomorphism[0]
		%%a函数的集合;[2 2 2 2]
		\item If $\lambda$ is one-eighth-integral, we can easily check that $\Delta_{[\lambda]}=\Delta_1\bigcup\Delta_2$, where the simple system of $\Delta_1$ is $\Pi_1=\{\beta_1=(\frac{1}{2},\frac{1}{2},\frac{1}{2},-\frac{1}{2})\}$, and the simple system of $\Delta_2$ is $\Pi_2=\{\gamma_1=(\frac{1}{2},-\frac{1}{2},-\frac{1}{2},-\frac{1}{2})\}$. The Dynkin diagram of $\Delta_{[\lambda]}$ is:
		\begin{Large}
			\begin{center}
				$\dynkin[labels={\beta_1}]A1 \ \ \ \ \dynkin[labels={\gamma_1}]A1$
			\end{center}
		\end{Large}
		and with the comparison with Dynkin diagrams of classical Lie algebra, we can easily get the isomorphism $\phi:\Delta_{[\lambda]}\to \widetilde{ A_1}\times \widetilde{ A_1}$. Then $\phi(\lambda+\rho)|_{\widetilde{ A_1}}=(4z+\frac{9}{2},-4z-\frac{9}{2})$, $\phi(\lambda+\rho)|_{\widetilde{ A_1}}=(\frac{1}{2},-\frac{1}{2})$. By using RS algorithm, we can easily get that:
		\begin{enumerate}
			\item When $z\in\{-\frac{7}{8},-\frac{5}{8},-\frac{3}{8},-\frac{1}{8}\}$, $a(w_{\lambda})=2$, and $\operatorname{GKdim} L(\lambda)=22<\dim(\fu)$. By Lemma \ref{reducible}, we can obtain that these points are reducible points.
			\item When $z\in\{-\frac{15}{8},-\frac{13}{8},-\frac{11}{8},-\frac{9}{8}\}$, $a(w_{\lambda})=1$, and $\operatorname{GKdim} L(\lambda)=23=\dim(\fu)$. By Lemma \ref{reducible}, we can obtain that these points are irreducible points.
		\end{enumerate} 
		%%%z=1/8结束
		%%%z=1/9
		%%%phi_lambda与1个类别的李代数同构
		%%种类数只有一种
		%%isomorphism[0 1]
		%%a函数的集合;[3 3 3 3 3 3]
		\item If $\lambda$ is one-ninth-integral, we can easily check that the simple system of $\Delta_{[\lambda]}$ is $\Pi=\{\beta_1=(\frac{1}{2},\frac{1}{2},\frac{1}{2},\frac{1}{2}),\beta_2=(\frac{1}{2},-\frac{1}{2},-\frac{1}{2},-\frac{1}{2})\}$. The Dynkin diagram of $\Delta_{[\lambda]}$ is:
		\begin{Large}
			\begin{center}
				$\dynkin[labels={\beta_1,\beta_2}]A2$
			\end{center}
		\end{Large}
		and with the comparison with Dynkin diagrams of classical Lie algebra, we can easily get the isomorphism $\phi:\Delta_{[\lambda]}\to \widetilde{ A_2}$. Then $\phi(\lambda+\rho)|_{\widetilde{ A_2}}=(6z+7,-3z-3,-3z-4)$. By using RS algorithm, we can easily get that:
		\begin{enumerate}
			\item When $z\in\{-\frac{8}{9},-\frac{7}{9},-\frac{5}{9},-\frac{4}{9},-\frac{2}{9},-\frac{1}{9}\}$, $a(w_{\lambda})=3$, and $\operatorname{GKdim} L(\lambda)=21<\dim(\fu)$. By Lemma \ref{reducible}, we can obtain that these points are reducible points.
			\item When $z\in\{-\frac{17}{9},-\frac{16}{9},-\frac{14}{9},-\frac{13}{9},-\frac{11}{9},-\frac{10}{9}\}$, $a(w_{\lambda})=1$, and $\operatorname{GKdim} L(\lambda)=23=\dim(\fu)$. By Lemma \ref{reducible}, we can obtain that these points are irreducible points.
		\end{enumerate} 
		%%%z=1/9结束
		\item If $\lambda$ is not belonging to any of the types mentioned above, one can easily check that $\Delta_{[\lambda]}\simeq  A_1$. The simple system of $\Delta_{[\lambda]}$ is $\Pi=\{\beta_1=(\frac{1}{2},-\frac{1}{2},-\frac{1}{2},-\frac{1}{2})\}$. We can get $a(w_{\lambda})=1$ by using RS algorithm, thus $\operatorname{GKdim} L(\lambda)=23=\dim(\fu)$. In this case $M_I(z\widehat{\xi_1})$ is irreducible.
		
		All in all, when $p=1$, $ M_I(z\widehat{\xi_1})$ is reducible if and only if
		\begin{align*}
			\begin{split}
				z&\in \{-1+\mathbb{Z}_{\geq 0}\}
				\\&\cup \{-\frac{1}{2}+\mathbb{Z}_{\geq 0}\} \\&\cup \{-\frac{2}{3}+\mathbb{Z}_{\geq 0}\} \cup \{-\frac{1}{3}+\mathbb{Z}_{\geq 0}\} \\&\cup \{-\frac{3}{4}+\mathbb{Z}_{\geq 0}\} \cup \{-\frac{1}{4}+\mathbb{Z}_{\geq 0}\} \\&\cup \{-\frac{4}{5}+\mathbb{Z}_{\geq 0}\} \cup \{-\frac{3}{5}+\mathbb{Z}_{\geq 0}\} \cup \{-\frac{2}{5}+\mathbb{Z}_{\geq 0}\} \cup \{-\frac{1}{5}+\mathbb{Z}_{\geq 0}\} \\&\cup \{-\frac{5}{6}+\mathbb{Z}_{\geq 0}\} \cup \{-\frac{1}{6}+\mathbb{Z}_{\geq 0}\} \\&\cup \{-\frac{6}{7}+\mathbb{Z}_{\geq 0}\} \cup \{-\frac{5}{7}+\mathbb{Z}_{\geq 0}\} \cup \{-\frac{4}{7}+\mathbb{Z}_{\geq 0}\} \cup \{-\frac{3}{7}+\mathbb{Z}_{\geq 0}\} \cup \{-\frac{2}{7}+\mathbb{Z}_{\geq 0}\} \cup \{-\frac{1}{7}+\mathbb{Z}_{\geq 0}\} \\&\cup \{-\frac{7}{8}+\mathbb{Z}_{\geq 0}\} \cup \{-\frac{5}{8}+\mathbb{Z}_{\geq 0}\} \cup \{-\frac{3}{8}+\mathbb{Z}_{\geq 0}\} \cup \{-\frac{1}{8}+\mathbb{Z}_{\geq 0}\} \\&\cup \{-\frac{8}{9}+\mathbb{Z}_{\geq 0}\} \cup \{-\frac{7}{9}+\mathbb{Z}_{\geq 0}\} \cup \{-\frac{5}{9}+\mathbb{Z}_{\geq 0}\} \cup \{-\frac{4}{9}+\mathbb{Z}_{\geq 0}\} \cup \{-\frac{2}{9}+\mathbb{Z}_{\geq 0}\} \cup \{-\frac{1}{9}+\mathbb{Z}_{\geq 0}\}\\&=\bigcup\limits_{i=5}^{9}\{-1+\frac{1}{i}\mathbb{Z}_{\geq 0}\}. 
			\end{split}
		\end{align*}
		
	\end{enumerate}

	% We use a table to state the above steps:
	% \begin{table}
		% 	\centering
		% 	\renewcommand{\arraystretch}{1.4}
		% 	\setlength\tabcolsep{5pt}
		% 	\caption{$\dim(\fu)$ of $G$ with minimal parabolic}
		% \label{constants}
		% {	\begin{tabular}{llll}		
				%   \toprule
				%   $G$ & $|\Delta^+|$&   & $\dim(\fu)$ \\ \hline 
				% %\midrule
				%  $SU(p,n-p)$ & $\frac{n(n-1)}{2}$&  & $\frac{n(n-1)}{2}-1$  \\
				%   $SO(2n+1,\mathbb{C})$  & $n^2$&  & $n^2-1$ \\ 
				%     $Sp(2n,\mathbb{C})$  & $n^2$& & $n^2-1$   \\  
				%    $SO(2n,\mathbb{C})$ & $n^2-n$&  & $n^2-n-1$ \\ 
				%   $E_{6}$   & 36&  & $35$ \\  
				%   $E_{7}$  & 63& & $62$ \\
				%   $E_{8}$  & 120& & $119$ \\
				%   $F_{4}$  & 24& & $23$ \\
				%   $G_{2}$  & 6& & $5$ \\
				% \bottomrule
				
				% \end{tabular}}
		% \end{table}
	
	{\bf Step 2.} Let $\Delta^+(\fl)=\{(0,0,0,1)\}=\{\alpha_2\}$. When $M_I(\lambda)$ is of scalar type, we know that $\lambda=z(\xi_1+\xi_3+\xi_4)=z(\rho-\xi_2)$ for some $z\in\mathbb{R}$, where $\xi_2$ is the fundamental weight of simple root $\alpha_2$ and $\xi_2=(\frac{3}{2},\frac{1}{2},\frac{1}{2},\frac{1}{2})$.
	In \cite{Knapp} we know that $\rho=(\frac{11}{2},\frac{5}{2},\frac{3}{2},\frac{1}{2})$ and $\lambda+\rho=(\frac{11}{2}+4z,\frac{5}{2}+2z,\frac{3}{2}+z,\frac{1}{2})$.
	
	% Write $\overline{\lambda'}|_{\mathfrak{h}^*_{\Delta_{i}}}=\phi(\sum_{\alpha_i\in \Pi_{[\lambda]_i}}k_iw_i)$, where $k_i=\langle \lambda,\alpha_i^{\vee}\rangle$.
	%%?要加吗

	\begin{enumerate}
		%%%z=1
		\item If $\lambda$ is integral, then we have the following.
		\begin{enumerate}
			\item When $z=-1$, then $\lambda+\rho=(\frac{3}{2},\frac{1}{2},\frac{1}{2},\frac{1}{2})$, we can get $w_\lambda=[1, 2, 1, 0, 1, 3, 2, 1, \\2, 0, 1, 3, 2, 1, 2, 0, 1, 3, 2, 1]$. By using PyCox we can get that $\operatorname{GKdim} L(\lambda)=20<\dim(\fu)$.
			By Lemma \ref{reducible} we can obtain that $z=-1$ is a diagonal-reducible point.
			\item When $z=-2$, then $\lambda+\rho=(-\frac{5}{2},-\frac{3}{2},-\frac{1}{2},\frac{1}{2})$, we can get $w_\lambda=[1]$. By using PyCox we can get that $\operatorname{GKdim} L(\lambda)=23=\dim(\fu)$.
			By Lemma \ref{reducible} we can obtain that $z=-2$ is an irreducible point.
		\end{enumerate}
		%%%z=1结束
		%%%z=1/2
		%%%phi_lambda与2个类别的李代数同构
		%%isomorphism[0]
		%%isomorphism[2 0 1]
		%%a函数的集合;[10]
		\item If $\lambda$ is half-integral, we can easily check that $\Delta_{[\lambda]}=\Delta_1\bigcup\Delta_2$, where the simple system of $\Delta_1$ is $\Pi_1=\{\beta_1=(0,0,1,0)\}$, and the simple system of $\Delta_2$ is $\Pi_2=\{\gamma_1=(0,0,0,1),\gamma_2=(1,-1,0,0),\gamma_3=(0,1,0,-1)\}$. The Dynkin diagram of $\Delta_{[\lambda]}$ is:
		\begin{Large}
			\begin{center}
				$\dynkin[labels={\beta_1}]A1 \ \ \ \ \dynkin[labels={\gamma_2,\gamma_3,\gamma_1}]B3$
			\end{center}
		\end{Large}
		and with the comparison with Dynkin diagrams of classical Lie algebra, we can easily get the isomorphism $\phi:\Delta_{[\lambda]}\to \widetilde{ A_1}\times B_3$. Then $\phi(\lambda+\rho)|_{\widetilde{ A_1}}=(z+\frac{3}{2},-z-\frac{3}{2})$, $\phi(\lambda+\rho)|_{B_3}=(4z+\frac{11}{2},2z+\frac{5}{2},\frac{1}{2})$. By using RS algorithm, we can easily get that:
		\begin{enumerate}
			\item When $z=-\frac{1}{2}$, $a(w_{\lambda})=10$, and $\operatorname{GKdim} L(\lambda)=14<\dim(\fu)$. By Lemma \ref{reducible}, we can obtain that this point is an reducible point.
			\item When $z=-\frac{3}{2}$, $a(w_{\lambda})=1$, and $\operatorname{GKdim} L(\lambda)=23=\dim(\fu)$. By Lemma \ref{reducible}, we can obtain that this point is an irreducible point.
		\end{enumerate} 
		%%%z=1/2结束
		%%%z=1/3
		%%%phi_lambda与2个类别的李代数同构
		%%isomorphism[0 1]
		%%isomorphism[0 1]
		%%a函数的集合;[6 6]
		\item If $\lambda$ is one-third-integral, we can easily check that $\Delta_{[\lambda]}=\Delta_1\bigcup\Delta_2$, where the simple system of $\Delta_1$ is $\Pi_1=\{\beta_1=(0,0,0,1),\beta_2=(\frac{1}{2},-\frac{1}{2},\frac{1}{2},-\frac{1}{2})\}$, and the simple system of $\Delta_2$ is $\Pi_2=\{\gamma_1=(0,1,1,0),\gamma_2=(1,0,-1,0)\}$. The Dynkin diagram of $\Delta_{[\lambda]}$ is:
		\begin{Large}
			\begin{center}
				$\dynkin[labels={\beta_1,\beta_2}]A2 \ \ \ \ \dynkin[labels={\gamma_1,\gamma_2}]A2$
			\end{center}
		\end{Large}
		and with the comparison with Dynkin diagrams of classical Lie algebra, we can easily get the isomorphism $\phi:\Delta_{[\lambda]}\to \widetilde{ A_2}\times A_2$. Then $\phi(\lambda+\rho)|_{\widetilde{ A_2}}=(z+2,z+1,-2z-3)$, $\phi(\lambda+\rho)|_{ A_2}=(3z+4,0,-3z-4)$. By using RS algorithm, we can easily get that:
		\begin{enumerate}
			\item When $z\in\{-\frac{2}{3},-\frac{1}{3}\}$, $a(w_{\lambda})=6$, and $\operatorname{GKdim} L(\lambda)=18<\dim(\fu)$. By Lemma \ref{reducible}, we can obtain that these points are reducible points.
			\item When $z\in\{-\frac{5}{3},-\frac{4}{3}\}$, $a(w_{\lambda})=1$, and $\operatorname{GKdim} L(\lambda)=23=\dim(\fu)$. By Lemma \ref{reducible}, we can obtain that these points are irreducible points.
		\end{enumerate} 
		%%%z=1/3结束
		%%%z=1/4
		%%%phi_lambda与2个类别的李代数同构
		%%isomorphism[0]
		%%isomorphism[1 0]
		%%a函数的集合;[5 5]
		\item If $\lambda$ is one-fourth-integral, we can easily check that $\Delta_{[\lambda]}=\Delta_1\bigcup\Delta_2$, where the simple system of $\Delta_1$ is $\Pi_1=\{\beta_1=(0,1,0,0)\}$, and the simple system of $\Delta_2$ is $\Pi_2=\{\gamma_1=(0,0,0,1),\gamma_2=(1,0,0,-1)\}$. The Dynkin diagram of $\Delta_{[\lambda]}$ is:
		\begin{Large}
			\begin{center}
				$\dynkin[labels={\beta_1}]A1 \ \ \ \ \dynkin[labels={\gamma_2,\gamma_1}]B2$
			\end{center}
		\end{Large}
		and with the comparison with Dynkin diagrams of classical Lie algebra, we can easily get the isomorphism $\phi:\Delta_{[\lambda]}\to \widetilde{ A_1}\times B_2$. Then $\phi(\lambda+\rho)|_{\widetilde{ A_1}}=(2z+\frac{5}{2},-2z-\frac{5}{2})$, $\phi(\lambda+\rho)|_{B_2}=(4z+\frac{11}{2},\frac{1}{2})$. By using RS algorithm, we can easily get that:
		\begin{enumerate}
			\item When $z\in\{-\frac{3}{4},-\frac{1}{4}\}$, $a(w_{\lambda})=5$, and $\operatorname{GKdim} L(\lambda)=19<\dim(\fu)$. By Lemma \ref{reducible}, we can obtain that these points are reducible points.
			\item When $z\in\{-\frac{7}{4},-\frac{5}{4}\}$, $a(w_{\lambda})=1$, and $\operatorname{GKdim} L(\lambda)=23=\dim(\fu)$. By Lemma \ref{reducible}, we can obtain that these points are irreducible points.
		\end{enumerate} 
		%%%z=1/4结束
		%%%z=1/5
		%%%phi_lambda与2个类别的李代数同构
		%%isomorphism[0 1]
		%%isomorphism[0]
		%%a函数的集合;[4 4 4 2]
		\item If $\lambda$ is one-fifth-integral, we can easily check that $\Delta_{[\lambda]}=\Delta_1\bigcup\Delta_2$, where the simple system of $\Delta_1$ is $\Pi_1=\{\beta_1=(0,0,0,1),\beta_2=(\frac{1}{2},\frac{1}{2},-\frac{1}{2},-\frac{1}{2})\}$, and the simple system of $\Delta_2$ is $\Pi_2=\{\gamma_1=(1,0,1,0)\}$. The Dynkin diagram of $\Delta_{[\lambda]}$ is:
		\begin{Large}
			\begin{center}
				$\dynkin[labels={\beta_1,\beta_2}]A2 \ \ \ \ \dynkin[labels={\gamma_1}]A1$
			\end{center}
		\end{Large}
		and with the comparison with Dynkin diagrams of classical Lie algebra, we can easily get the isomorphism $\phi:\Delta_{[\lambda]}\to \widetilde{ A_2}\times A_1$. Then $\phi(\lambda+\rho)|_{\widetilde{ A_2}}=(\frac{5}{3}z+\frac{8}{3},\frac{5}{3}z+\frac{5}{3},-\frac{10}{3}z-\frac{13}{3})$, $\phi(\lambda+\rho)|_{A_1}=(\frac{5}{2}z+\frac{7}{2},-\frac{5}{2}z-\frac{7}{2})$. By using RS algorithm, we can easily get that:
		\begin{enumerate}
			\item When $z\in\{-\frac{4}{5},-\frac{3}{5},-\frac{2}{5}\}$, $a(w_{\lambda})=4$, and $\operatorname{GKdim} L(\lambda)=20<\dim(\fu)$. By Lemma \ref{reducible}, we can obtain that these points are reducible points.
			\item When $z=-\frac{6}{5}$, $a(w_{\lambda})=2$, and $\operatorname{GKdim} L(\lambda)=22<\dim(\fu)$. By Lemma \ref{reducible}, we can obtain that this point is an reducible point.
			\item When $z\in\{-\frac{9}{5},-\frac{8}{5},-\frac{7}{5},-\frac{11}{5}\}$, $a(w_{\lambda})=1$, and $\operatorname{GKdim} L(\lambda)=23=\dim(\fu)$. By Lemma \ref{reducible}, we can obtain that these points are irreducible points.
		\end{enumerate} 
		%%%z=1/5结束
		%%%z=1/6
		%%%phi_lambda与2个类别的李代数同构
		%%isomorphism[0]
		%%isomorphism[0]
		%%a函数的集合;[2 2]
		\item If $\lambda$ is one-sixth-integral, we can easily check that $\Delta_{[\lambda]}=\Delta_1\bigcup\Delta_2$, where the simple system of $\Delta_1$ is $\Pi_1=\{\beta_1=(0,0,0,1)\}$, and the simple system of $\Delta_2$ is $\Pi_2=\{\gamma_1=(1,1,0,0)\}$. The Dynkin diagram of $\Delta_{[\lambda]}$ is:
		\begin{Large}
			\begin{center}
				$\dynkin[labels={\beta_1}]A1 \ \ \ \ \dynkin[labels={\gamma_1}]A1$
			\end{center}
		\end{Large}
		and with the comparison with Dynkin diagrams of classical Lie algebra, we can easily get the isomorphism $\phi:\Delta_{[\lambda]}\to \widetilde{ A_1}\times A_1$. Then $\phi(\lambda+\rho)|_{\widetilde{ A_1}}=(\frac{1}{2},-\frac{1}{2})$, $\phi(\lambda+\rho)|_{ A_1}=(3z+4,-3z-4)$. By using RS algorithm, we can easily get that:
		\begin{enumerate}
			\item When $z\in\{-\frac{5}{6},-\frac{7}{6}\}$, $a(w_{\lambda})=2$, and $\operatorname{GKdim} L(\lambda)=22<\dim(\fu)$. By Lemma \ref{reducible}, we can obtain that these points are reducible points.
			\item When $z\in\{-\frac{11}{6},-\frac{13}{6}\}$, $a(w_{\lambda})=1$, and $\operatorname{GKdim} L(\lambda)=23=\dim(\fu)$. By Lemma \ref{reducible}, we can obtain that these points are irreducible points.
		\end{enumerate} 
		%%%z=1/6结束
		%%%z=1/7
		%%%phi_lambda与1个类别的李代数同构
		%%种类数只有一种
		%%isomorphism[0 1]
		%%a函数的集合;[3 3 3 3 3 3]
		\item If $\lambda$ is one-seventh-integral, we can easily check that the simple system of $\Delta_{[\lambda]}$ is $\Pi=\{\beta_1=(0,0,0,1),\beta_2=(\frac{1}{2},\frac{1}{2},\frac{1}{2},-\frac{1}{2})\}$. The Dynkin diagram of $\Delta_{[\lambda]}$ is:
		\begin{Large}
			\begin{center}
				$\dynkin[labels={\beta_1,\beta_2}]A2$
			\end{center}
		\end{Large}
		and with the comparison with Dynkin diagrams of classical Lie algebra, we can easily get the isomorphism $\phi:\Delta_{[\lambda]}\to \widetilde{ A_2}$. Then $\phi(\lambda+\rho)|_{\widetilde{ A_2}}=(\frac{7}{3}z+\frac{11}{3},\frac{7}{3}z+\frac{8}{3},-\frac{14}{3}z-\frac{19}{3})$. By using RS algorithm, we can easily get that:
		\begin{enumerate}
			\item When $z\in\{-\frac{6}{7},-\frac{5}{7},-\frac{4}{7},-\frac{3}{7},-\frac{2}{7},-\frac{8}{7}\}$, $a(w_{\lambda})=3$, and $\operatorname{GKdim} L(\lambda)=21<\dim(\fu)$. By Lemma \ref{reducible}, we can obtain that these points are reducible points.
			\item When $z\in\{-\frac{13}{7},-\frac{12}{7},-\frac{11}{7},-\frac{10}{7},-\frac{9}{7},-\frac{15}{7}\}$, $a(w_{\lambda})=1$, and $\operatorname{GKdim} L(\lambda)=23=\dim(\fu)$. By Lemma \ref{reducible}, we can obtain that these points are irreducible points.
		\end{enumerate} 
		%%%z=1/7结束
		%%%z=1/8
		%%%phi_lambda与2个类别的李代数同构
		%%isomorphism[0]
		%%isomorphism[0]
		%%a函数的集合;[2 2 2 2]
		\item If $\lambda$ is one-eighth-integral, we can easily check that $\Delta_{[\lambda]}=\Delta_1\bigcup\Delta_2$, where the simple system of $\Delta_1$ is $\Pi_1=\{\beta_1=(1,0,0,0)\}$, and the simple system of $\Delta_2$ is $\Pi_2=\{\gamma_1=(0,0,0,1)\}$. The Dynkin diagram of $\Delta_{[\lambda]}$ is:
		\begin{Large}
			\begin{center}
				$\dynkin[labels={\beta_1}]A1 \ \ \ \ \dynkin[labels={\gamma_1}]A1$
			\end{center}
		\end{Large}
		and with the comparison with Dynkin diagrams of classical Lie algebra, we can easily get the isomorphism $\phi:\Delta_{[\lambda]}\to \widetilde{ A_1}\times \widetilde{ A_1}$. Then $\phi(\lambda+\rho)|_{\widetilde{ A_1}}=(4z+\frac{11}{2},-4z-\frac{11}{2})$, $\phi(\lambda+\rho)|_{\widetilde{ A_1}}=(\frac{1}{2},-\frac{1}{2})$. By using RS algorithm, we can easily get that:
		\begin{enumerate}
			\item When $z\in\{-\frac{7}{8},-\frac{5}{8},-\frac{3}{8},-\frac{9}{8}\}$, $a(w_{\lambda})=2$, and $\operatorname{GKdim} L(\lambda)=22<\dim(\fu)$. By Lemma \ref{reducible}, we can obtain that these points are reducible points.
			\item When $z\in\{-\frac{15}{8},-\frac{13}{8},-\frac{11}{8},-\frac{17}{8}\}$, $a(w_{\lambda})=1$, and $\operatorname{GKdim} L(\lambda)=23=\dim(\fu)$. By Lemma \ref{reducible}, we can obtain that these points are irreducible points.
		\end{enumerate} 
		%%%z=1/8结束
		\item If $\lambda$ is not belonging to any of the types mentioned above, one can easily check that $\Delta_{[\lambda]}\simeq  A_1$. The simple system of $\Delta_{[\lambda]}$ is $\Pi=\{\beta_1=(0,0,0,1)\}$. We can get $a(w_{\lambda})=1$ by using RS algorithm, thus $\operatorname{GKdim} L(\lambda)=23=\dim(\fu)$. In this case $M_I(z\widehat{\xi_2})$ is irreducible.
		
		All in all, when $p=2$, $ M_I(z\widehat{\xi_2})$ is reducible if and only if
		\begin{align*}
			\begin{split}
				z&\in \{-1+\mathbb{Z}_{\geq 0}\}
				\\&\cup \{-\frac{1}{2}+\mathbb{Z}_{\geq 0}\} \\&\cup \{-\frac{2}{3}+\mathbb{Z}_{\geq 0}\} \cup \{-\frac{1}{3}+\mathbb{Z}_{\geq 0}\} \\&\cup \{-\frac{3}{4}+\mathbb{Z}_{\geq 0}\} \cup \{-\frac{1}{4}+\mathbb{Z}_{\geq 0}\} \\&\cup \{-\frac{4}{5}+\mathbb{Z}_{\geq 0}\} \cup \{-\frac{3}{5}+\mathbb{Z}_{\geq 0}\} \cup \{-\frac{2}{5}+\mathbb{Z}_{\geq 0}\} \cup \{-\frac{6}{5}+\mathbb{Z}_{\geq 0}\} \\&\cup \{-\frac{5}{6}+\mathbb{Z}_{\geq 0}\} \cup \{-\frac{7}{6}+\mathbb{Z}_{\geq 0}\} \\&\cup \{-\frac{6}{7}+\mathbb{Z}_{\geq 0}\} \cup \{-\frac{5}{7}+\mathbb{Z}_{\geq 0}\} \cup \{-\frac{4}{7}+\mathbb{Z}_{\geq 0}\} \cup \{-\frac{3}{7}+\mathbb{Z}_{\geq 0}\} \cup \{-\frac{2}{7}+\mathbb{Z}_{\geq 0}\} \cup \{-\frac{8}{7}+\mathbb{Z}_{\geq 0}\} \\&\cup \{-\frac{7}{8}+\mathbb{Z}_{\geq 0}\} \cup \{-\frac{5}{8}+\mathbb{Z}_{\geq 0}\} \cup \{-\frac{3}{8}+\mathbb{Z}_{\geq 0}\} \cup \{-\frac{9}{8}+\mathbb{Z}_{\geq 0}\}\\&=\bigcup\limits_{i=5}^{8}\{-\frac{i+1}{i}+\frac{1}{i}\mathbb{Z}_{\geq 0}\}. 
			\end{split}
		\end{align*}
	\end{enumerate}

	{\bf Step 3.} Let $\Delta^+(\fl)=\{(0,0,1,-1)=\{\alpha_3\}\}$. When $M_I(\lambda)$ is of scalar type, we know that $\lambda=z(\xi_1+\xi_2+\xi_4)=z(\rho-\xi_3)$ for some $z\in\mathbb{R}$, where $\xi_3$ is the fundamental weight of simple root $\alpha_3$ and $\xi_3=(2,1,1,0)$.
	In \cite{Knapp} we know that $\rho=(\frac{11}{2},\frac{5}{2},\frac{3}{2},\frac{1}{2})$ and $\lambda+\rho=(\frac{11}{2}+\frac{7}{2}z,\frac{5}{2}+\frac{3}{2}z,\frac{3}{2}+\frac{1}{2}z,\frac{1}{2}+\frac{1}{2}z)$.
	\begin{enumerate}
		\item If $\lambda$ is integral, then we have the following.
		\begin{enumerate}
			\item When $z=-1$, then $\lambda+\rho=(2,1,1,0)$, we can get $w_\lambda=[2, 3, 1, 2, 3, 1, 2, 0, \\1, 2, 1, 0, 3, 2, 1, 2, 3, 0, 1, 2]$. By using PyCox we can get that $\operatorname{GKdim} L(\lambda)=20<\dim(\fu)$.
			By Lemma \ref{reducible} we can obtain that $z=-1$ is a diagonal-reducible point.
			\item When $z=-2$, then $\lambda+\rho=(-\frac{3}{2},-\frac{1}{2},\frac{1}{2},-\frac{1}{2})$, we can get $w_\lambda=[1, 2]$. By using PyCox we can get that $\operatorname{GKdim} L(\lambda)=23=\dim(\fu)$.
			By Lemma \ref{reducible} we can obtain that $z=-2$ is an irreducible point.
		\end{enumerate}
		%%%z=1/2
		%%%phi_lambda与2个类别的李代数同构
		%%isomorphism[0 1 2]
		%%isomorphism[0]
		%%a函数的集合;[2]
		\item If $\lambda$ is half-integral, we can easily check that $\Delta_{[\lambda]}=\Delta_1\bigcup\Delta_2$, where the simple system of $\Delta_1$ is $\Pi_1=\{\beta_1=(0,1,0,1),\beta_2=(0,0,1,-1),\beta_3=(\frac{1}{2},-\frac{1}{2},-\frac{1}{2},\frac{1}{2})\}$, and the simple system of $\Delta_2$ is $\Pi_2=\{\gamma_1=(\frac{1}{2},\frac{1}{2},-\frac{1}{2},-\frac{1}{2})\}$. The Dynkin diagram of $\Delta_{[\lambda]}$ is:
		\begin{Large}
			\begin{center}
				$\dynkin[labels={\beta_1,\beta_2,\beta_3}]B3 \ \ \ \ \dynkin[labels={\gamma_1}]A1$
			\end{center}
		\end{Large}
		and with the comparison with Dynkin diagrams of classical Lie algebra, we can easily get the isomorphism $\phi:\Delta_{[\lambda]}\to B_3\times \widetilde{ A_1}$. Then $\phi(\lambda+\rho)|_{B_3}=(3z+5,z+2,z+1)$, $\phi(\lambda+\rho)|_{\widetilde{ A_1}}=(2z+3,-2z-3)$. By using RS algorithm, we can easily get that:
		\begin{enumerate}
			\item When $z=-\frac{3}{2}$, $a(w_{\lambda})=2$, and $\operatorname{GKdim} L(\lambda)=22<\dim(\fu)$. By Lemma \ref{reducible}, we can obtain that this point is an reducible point.
			\item When $z=-\frac{5}{2}$, $a(w_{\lambda})=1$, and $\operatorname{GKdim} L(\lambda)=23=\dim(\fu)$. By Lemma \ref{reducible}, we can obtain that this point is an irreducible point.
		\end{enumerate} 
		%%%z=1/2结束
		%%%z=1/3
		%%%phi_lambda与2个类别的李代数同构
		%%isomorphism[0 1]
		%%isomorphism[0 1]
		%%a函数的集合;[6 4]
		\item If $\lambda$ is one-third-integral, we can easily check that $\Delta_{[\lambda]}=\Delta_1\bigcup\Delta_2$, where the simple system of $\Delta_1$ is $\Pi_1=\{\beta_1=(0,1,0,0),\beta_2=(\frac{1}{2},-\frac{1}{2},\frac{1}{2},\frac{1}{2})\}$, and the simple system of $\Delta_2$ is $\Pi_2=\{\gamma_1=(1,0,-1,0),\gamma_2=(0,0,1,-1)\}$. The Dynkin diagram of $\Delta_{[\lambda]}$ is:
		\begin{Large}
			\begin{center}
				$\dynkin[labels={\beta_1,\beta_2}]A2 \ \ \ \ \dynkin[labels={\gamma_1,\gamma_2}]A2$
			\end{center}
		\end{Large}
		and with the comparison with Dynkin diagrams of classical Lie algebra, we can easily get the isomorphism $\phi:\Delta_{[\lambda]}\to \widetilde{ A_2}\times A_2$. Then $\phi(\lambda+\rho)|_{\widetilde{ A_2}}=(3z+5,0,-3z-5)$, $\phi(\lambda+\rho)|_{ A_2}=(2z+3,-z-1,-z-2)$. By using RS algorithm, we can easily get that:
		\begin{enumerate}
			\item When $z=-\frac{2}{3}$, $a(w_{\lambda})=6$, and $\operatorname{GKdim} L(\lambda)=18<\dim(\fu)$. By Lemma \ref{reducible}, we can obtain that this point is an reducible point.
			\item When $z=-\frac{4}{3}$, $a(w_{\lambda})=4$, and $\operatorname{GKdim} L(\lambda)=20<\dim(\fu)$. By Lemma \ref{reducible}, we can obtain that this point is an reducible point.
			\item When $z\in\{-\frac{5}{3},-\frac{7}{3}\}$, $a(w_{\lambda})=1$, and $\operatorname{GKdim} L(\lambda)=23=\dim(\fu)$. By Lemma \ref{reducible}, we can obtain that these points are irreducible points.
		\end{enumerate} 
		%%%z=1/3结束
		%%%z=1/4
		%%%phi_lambda与2个类别的李代数同构
		%%isomorphism[0 1]
		%%isomorphism[0]
		%%a函数的集合;[4 4]
		\item If $\lambda$ is one-fourth-integral, we can easily check that $\Delta_{[\lambda]}=\Delta_1\bigcup\Delta_2$, where the simple system of $\Delta_1$ is $\Pi_1=\{\beta_1=(1,0,0,1),\beta_2=(0,0,1,-1)\}$, and the simple system of $\Delta_2$ is $\Pi_2=\{\gamma_1=(\frac{1}{2},\frac{1}{2},-\frac{1}{2},-\frac{1}{2})\}$. The Dynkin diagram of $\Delta_{[\lambda]}$ is:
		\begin{Large}
			\begin{center}
				$\dynkin[labels={\beta_1,\beta_2}]A2 \ \ \ \ \dynkin[labels={\gamma_1}]A1$
			\end{center}
		\end{Large}
		and with the comparison with Dynkin diagrams of classical Lie algebra, we can easily get the isomorphism $\phi:\Delta_{[\lambda]}\to A_2\times \widetilde{ A_1}$. Then $\phi(\lambda+\rho)|_{ A_2}=(\frac{8}{3}z+\frac{13}{3},-\frac{4}{3}z-\frac{5}{3},-\frac{4}{3}z-\frac{8}{3})$, $\phi(\lambda+\rho)|_{\widetilde{ A_1}}=(2z+3,-2z-3)$. By using RS algorithm, we can easily get that:
		\begin{enumerate}
			\item When $z\in\{-\frac{3}{4},-\frac{5}{4}\}$, $a(w_{\lambda})=4$, and $\operatorname{GKdim} L(\lambda)=20<\dim(\fu)$. By Lemma \ref{reducible}, we can obtain that these points are reducible points.
			\item When $z\in\{-\frac{7}{4},-\frac{9}{4}\}$, $a(w_{\lambda})=1$, and $\operatorname{GKdim} L(\lambda)=23=\dim(\fu)$. By Lemma \ref{reducible}, we can obtain that these points are irreducible points.
		\end{enumerate} 
		%%%z=1/4结束
		%%%z=1/5
		%%%phi_lambda与1个类别的李代数同构
		%%种类数只有一种
		%%isomorphism[0 1]
		%%a函数的集合;[4 4 4 4]
		\item If $\lambda$ is one-fifth-integral, we can easily check that the simple system of $\Delta_{[\lambda]}$ is $\Pi=\{\beta_1=(0,0,1,-1),\beta_2=(\frac{1}{2},\frac{1}{2},-\frac{1}{2},\frac{1}{2})\}$. The Dynkin diagram of $\Delta_{[\lambda]}$ is:
		\begin{Large}
			\begin{center}
				$\dynkin[labels={\beta_1,\beta_2}]B2$
			\end{center}
		\end{Large}
		and with the comparison with Dynkin diagrams of classical Lie algebra, we can easily get the isomorphism $\phi:\Delta_{[\lambda]}\to B_2$. Then $\phi(\lambda+\rho)|_{B_2}=(\frac{5}{2}z+\frac{9}{2},\frac{5}{2}z+\frac{7}{2})$. By using RS algorithm, we can easily get that:
		\begin{enumerate}
			\item When $z\in\{-\frac{4}{5},-\frac{3}{5},-\frac{2}{5},-\frac{6}{5}\}$, $a(w_{\lambda})=4$, and $\operatorname{GKdim} L(\lambda)=20<\dim(\fu)$. By Lemma \ref{reducible}, we can obtain that these points are reducible points.
			\item When $z\in\{-\frac{9}{5},-\frac{8}{5},-\frac{7}{5},-\frac{11}{5}\}$, $a(w_{\lambda})=1$, and $\operatorname{GKdim} L(\lambda)=23=\dim(\fu)$. By Lemma \ref{reducible}, we can obtain that these points are irreducible points.
		\end{enumerate} 
		%%%z=1/5结束
		%%%z=1/6
		%%%phi_lambda与2个类别的李代数同构
		%%isomorphism[0]
		%%isomorphism[0]
		%%a函数的集合;[2 2]
		\item If $\lambda$ is one-sixth-integral, we can easily check that $\Delta_{[\lambda]}=\Delta_1\bigcup\Delta_2$, where the simple system of $\Delta_1$ is $\Pi_1=\{\beta_1=(0,0,1,-1)\}$, and the simple system of $\Delta_2$ is $\Pi_2=\{\gamma_1=(\frac{1}{2},\frac{1}{2},\frac{1}{2},\frac{1}{2})\}$. The Dynkin diagram of $\Delta_{[\lambda]}$ is:
		\begin{Large}
			\begin{center}
				$\dynkin[labels={\beta_1}]A1 \ \ \ \ \dynkin[labels={\gamma_1}]A1$
			\end{center}
		\end{Large}
		and with the comparison with Dynkin diagrams of classical Lie algebra, we can easily get the isomorphism $\phi:\Delta_{[\lambda]}\to A_1\times \widetilde{ A_1}$. Then $\phi(\lambda+\rho)|_{ A_1}=(\frac{1}{2},-\frac{1}{2})$, $\phi(\lambda+\rho)|_{\widetilde{ A_1}}=(3z+5,-3z-5)$. By using RS algorithm, we can easily get that:
		\begin{enumerate}
			\item When $z\in\{-\frac{5}{6},-\frac{7}{6}\}$, $a(w_{\lambda})=2$, and $\operatorname{GKdim} L(\lambda)=22<\dim(\fu)$. By Lemma \ref{reducible}, we can obtain that these points are reducible points.
			\item When $z\in\{-\frac{11}{6},-\frac{13}{6}\}$, $a(w_{\lambda})=1$, and $\operatorname{GKdim} L(\lambda)=23=\dim(\fu)$. By Lemma \ref{reducible}, we can obtain that these points are irreducible points.
		\end{enumerate} 
		%%%z=1/6结束
		%%%z=1/7
		%%%phi_lambda与2个类别的李代数同构
		%%isomorphism[0]
		%%isomorphism[0]
		%%a函数的集合;[2 2 2 2 2 2]
		\item If $\lambda$ is one-seventh-integral, we can easily check that $\Delta_{[\lambda]}=\Delta_1\bigcup\Delta_2$, where the simple system of $\Delta_1$ is $\Pi_1=\{\beta_1=(1,0,0,0)\}$, and the simple system of $\Delta_2$ is $\Pi_2=\{\gamma_1=(0,0,1,-1)\}$. The Dynkin diagram of $\Delta_{[\lambda]}$ is:
		\begin{Large}
			\begin{center}
				$\dynkin[labels={\beta_1}]A1 \ \ \ \ \dynkin[labels={\gamma_1}]A1$
			\end{center}
		\end{Large}
		and with the comparison with Dynkin diagrams of classical Lie algebra, we can easily get the isomorphism $\phi:\Delta_{[\lambda]}\to \widetilde{ A_1}\times A_1$. Then $\phi(\lambda+\rho)|_{\widetilde{ A_1}}=(\frac{7}{2}z+\frac{11}{2},-\frac{7}{2}z-\frac{11}{2})$, $\phi(\lambda+\rho)|_{ A_1}=(\frac{1}{2},-\frac{1}{2})$. By using RS algorithm, we can easily get that:
		\begin{enumerate}
			\item When $z\in\{-\frac{6}{7},-\frac{5}{7},-\frac{4}{7},-\frac{10}{7},-\frac{9}{7},-\frac{8}{7}\}$, $a(w_{\lambda})=2$, and $\operatorname{GKdim} L(\lambda)=22<\dim(\fu)$. By Lemma \ref{reducible}, we can obtain that these points are reducible points.
			\item When $z\in\{-\frac{13}{7},-\frac{12}{7},-\frac{11}{7},-\frac{17}{7},-\frac{16}{7},-\frac{15}{7}\}$, $a(w_{\lambda})=1$, and $\operatorname{GKdim} L(\lambda)=23=\dim(\fu)$. By Lemma \ref{reducible}, we can obtain that these points are irreducible points.
		\end{enumerate} 
		%%%z=1/7结束
		\item If $\lambda$ is not belonging to any of the types mentioned above, one can easily check that $\Delta_{[\lambda]}\simeq  A_1$. The simple system of $\Delta_{[\lambda]}$ is $\Pi=\{\beta_1=(0,0,1,-1)\}$. We can get $a(w_{\lambda})=1$ by using RS algorithm, thus $\operatorname{GKdim} L(\lambda)=23=\dim(\fu)$. In this case $M_I(z\widehat{\xi_3})$ is irreducible.
		
		All in all, when $p=3$, $ M_I(z\widehat{\xi_3})$ is reducible if and only if
		\begin{align*}
			\begin{split}
				z&\in \{-1+\mathbb{Z}_{\geq 0}\}
				\\&\cup \{-\frac{3}{2}+\mathbb{Z}_{\geq 0}\} \\&\cup \{-\frac{2}{3}+\mathbb{Z}_{\geq 0}\} \cup \{-\frac{4}{3}+\mathbb{Z}_{\geq 0}\} \\&\cup \{-\frac{3}{4}+\mathbb{Z}_{\geq 0}\} \cup \{-\frac{5}{4}+\mathbb{Z}_{\geq 0}\} \\&\cup \{-\frac{4}{5}+\mathbb{Z}_{\geq 0}\} \cup \{-\frac{3}{5}+\mathbb{Z}_{\geq 0}\} \cup \{-\frac{2}{5}+\mathbb{Z}_{\geq 0}\} \cup \{-\frac{6}{5}+\mathbb{Z}_{\geq 0}\} \\&\cup \{-\frac{5}{6}+\mathbb{Z}_{\geq 0}\} \cup \{-\frac{7}{6}+\mathbb{Z}_{\geq 0}\} \\&\cup \{-\frac{6}{7}+\mathbb{Z}_{\geq 0}\} \cup \{-\frac{5}{7}+\mathbb{Z}_{\geq 0}\} \cup \{-\frac{4}{7}+\mathbb{Z}_{\geq 0}\} \cup \{-\frac{10}{7}+\mathbb{Z}_{\geq 0}\} \cup \{-\frac{9}{7}+\mathbb{Z}_{\geq 0}\} \cup \{-\frac{8}{7}+\mathbb{Z}_{\geq 0}\}\\&=\bigcup\limits_{i=4}^{5}\{-\frac{i+1}{i}+\frac{1}{i}\mathbb{Z}_{\geq 0}\}\cup\bigcup\limits_{i=6}^{7}\{-\frac{i+3}{i}+\frac{1}{i}\mathbb{Z}_{\geq 0}\}. 
			\end{split}
		\end{align*}
	\end{enumerate}
	
	{\bf Step 4.} Let $\Delta^+(\fl)=\{(0,1,-1,0)\}=\{\alpha_4\}$. When $M_I(\lambda)$ is of scalar type, we know that $\lambda=z(\xi_1+\xi_2+\xi_3)=z(\rho-\xi_4)$ for some $z\in\mathbb{R}$, where $\xi_4$ is the fundamental weight of simple root $\alpha_4$ and $\xi_4=(1,1,0,0)$.
	In \cite{Knapp} we know that $\rho=(\frac{11}{2},\frac{5}{2},\frac{3}{2},\frac{1}{2})$ and $\lambda+\rho=(\frac{11}{2}+\frac{9}{2}z,\frac{5}{2}+\frac{3}{2}z,\frac{3}{2}+\frac{3}{2}z,\frac{1}{2}+\frac{1}{2}z)$.
	\begin{enumerate}
		\item If $\lambda$ is integral, then we have the following.
		\begin{enumerate}
			\item When $z=-1$, then $\lambda+\rho=(1,1,0,0)$, we can get $w_\lambda=[3, 2, 1, 2, 3, 0, 1, 2,\\ 3, 1, 2, 0, 1, 2, 3]$. By using PyCox we can get that $\operatorname{GKdim} L(\lambda)=21<\dim(\fu)$.
			By Lemma \ref{reducible} we can obtain that $z=-1$ is a diagonal-reducible point.
			\item When $z=-2$, then $\lambda+\rho=(-\frac{7}{2},-\frac{1}{2},-\frac{3}{2},-\frac{1}{2})$, we can get $w_\lambda=[3]$. By using PyCox we can get that $\operatorname{GKdim} L(\lambda)=23=\dim(\fu)$.
			By Lemma \ref{reducible} we can obtain that $z=-2$ is an irreducible point.
		\end{enumerate}
		%%%z=1/2
		%%%phi_lambda与2个类别的李代数同构
		%%isomorphism[0 1 2]
		%%isomorphism[0]
		%%a函数的集合;[10]
		\item If $\lambda$ is half-integral, we can easily check that $\Delta_{[\lambda]}=\Delta_1\bigcup\Delta_2$, where the simple system of $\Delta_1$ is $\Pi_1=\{\beta_1=(0,0,1,1),\beta_2=(0,1,-1,0),\beta_3=(\frac{1}{2},-\frac{1}{2},\frac{1}{2},-\frac{1}{2})\}$, and the simple system of $\Delta_2$ is $\Pi_2=\{\gamma_1=(\frac{1}{2},-\frac{1}{2},-\frac{1}{2},\frac{1}{2})\}$. The Dynkin diagram of $\Delta_{[\lambda]}$ is:
		\begin{Large}
			\begin{center}
				$\dynkin[labels={\beta_1,\beta_2,\beta_3}]B3 \ \ \ \ \dynkin[labels={\gamma_1}]A1$
			\end{center}
		\end{Large}
		and with the comparison with Dynkin diagrams of classical Lie algebra, we can easily get the isomorphism $\phi:\Delta_{[\lambda]}\to B_3\times \widetilde{ A_1}$. Then $\phi(\lambda+\rho)|_{B_3}=(4z+5,2z+3,2z+2)$, $\phi(\lambda+\rho)|_{\widetilde{ A_1}}=(z+1,-z-1)$. By using RS algorithm, we can easily get that:
		\begin{enumerate}
			\item When $z=-\frac{1}{2}$, $a(w_{\lambda})=10$, and $\operatorname{GKdim} L(\lambda)=14<\dim(\fu)$. By Lemma \ref{reducible}, we can obtain that this point is an reducible point.
			\item When $z=-\frac{3}{2}$, $a(w_{\lambda})=1$, and $\operatorname{GKdim} L(\lambda)=23=\dim(\fu)$. By Lemma \ref{reducible}, we can obtain that this point is an irreducible point.
		\end{enumerate} 
		%%%z=1/2结束
		%%%z=1/3
		%%%phi_lambda与1个类别的李代数同构
		%%种类数只有一种
		%%isomorphism[2 0 1]
		%%a函数的集合;[9 9]
		\item If $\lambda$ is one-third-integral, we can easily check that the simple system of $\Delta_{[\lambda]}$ is $\Pi=\{\beta_1=(0,0,1,0),\beta_2=(1,-1,0,0),\beta_3=(0,1,-1,0)\}$. The Dynkin diagram of $\Delta_{[\lambda]}$ is:
		\begin{Large}
			\begin{center}
				$\dynkin[labels={\beta_2,\beta_3,\beta_1}]B3$
			\end{center}
		\end{Large}
		and with the comparison with Dynkin diagrams of classical Lie algebra, we can easily get the isomorphism $\phi:\Delta_{[\lambda]}\to B_3$. Then $\phi(\lambda+\rho)|_{B_3}=(\frac{9}{2}z+\frac{11}{2},\frac{3}{2}z+\frac{5}{2},\frac{3}{2}z+\frac{3}{2})$. By using RS algorithm, we can easily get that:
		\begin{enumerate}
			\item When $z\in\{-\frac{2}{3},-\frac{1}{3}\}$, $a(w_{\lambda})=9$, and $\operatorname{GKdim} L(\lambda)=15<\dim(\fu)$. By Lemma \ref{reducible}, we can obtain that these points are reducible points.
			\item When $z\in\{-\frac{5}{3},-\frac{4}{3}\}$, $a(w_{\lambda})=1$, and $\operatorname{GKdim} L(\lambda)=23=\dim(\fu)$. By Lemma \ref{reducible}, we can obtain that these points are irreducible points.
		\end{enumerate} 
		%%%z=1/3结束
		%%%z=1/4
		%%%phi_lambda与2个类别的李代数同构
		%%isomorphism[0 1]
		%%isomorphism[0]
		%%a函数的集合;[5 5]
		\item If $\lambda$ is one-fourth-integral, we can easily check that $\Delta_{[\lambda]}=\Delta_1\bigcup\Delta_2$, where the simple system of $\Delta_1$ is $\Pi_1=\{\beta_1=(0,1,-1,0),\beta_2=(\frac{1}{2},-\frac{1}{2},\frac{1}{2},-\frac{1}{2})\}$, and the simple system of $\Delta_2$ is $\Pi_2=\{\gamma_1=(\frac{1}{2},\frac{1}{2},\frac{1}{2},\frac{1}{2})\}$. The Dynkin diagram of $\Delta_{[\lambda]}$ is:
		\begin{Large}
			\begin{center}
				$\dynkin[labels={\beta_1,\beta_2}]B2 \ \ \ \ \dynkin[labels={\gamma_1}]A1$
			\end{center}
		\end{Large}
		and with the comparison with Dynkin diagrams of classical Lie algebra, we can easily get the isomorphism $\phi:\Delta_{[\lambda]}\to B_2\times \widetilde{ A_1}$. Then $\phi(\lambda+\rho)|_{B_2}=(2z+3,2z+2)$, $\phi(\lambda+\rho)|_{\widetilde{ A_1}}=(4z+5,-4z-5)$. By using RS algorithm, we can easily get that:
		\begin{enumerate}
			\item When $z\in\{-\frac{3}{4},-\frac{1}{4}\}$, $a(w_{\lambda})=5$, and $\operatorname{GKdim} L(\lambda)=19<\dim(\fu)$. By Lemma \ref{reducible}, we can obtain that these points are reducible points.
			\item When $z\in\{-\frac{7}{4},-\frac{5}{4}\}$, $a(w_{\lambda})=1$, and $\operatorname{GKdim} L(\lambda)=23=\dim(\fu)$. By Lemma \ref{reducible}, we can obtain that these points are irreducible points.
		\end{enumerate} 
		%%%z=1/4结束
		%%%z=1/5
		%%%phi_lambda与1个类别的李代数同构
		%%种类数只有一种
		%%isomorphism[0 1]
		%%a函数的集合;[4 4 4 4]
		\item If $\lambda$ is one-fifth-integral, we can easily check that the simple system of $\Delta_{[\lambda]}$ is $\Pi=\{\beta_1=(0,1,-1,0),\beta_2=(\frac{1}{2},-\frac{1}{2},\frac{1}{2},\frac{1}{2})\}$. The Dynkin diagram of $\Delta_{[\lambda]}$ is:
		\begin{Large}
			\begin{center}
				$\dynkin[labels={\beta_1,\beta_2}]B2$
			\end{center}
		\end{Large}
		and with the comparison with Dynkin diagrams of classical Lie algebra, we can easily get the isomorphism $\phi:\Delta_{[\lambda]}\to B_2$. Then $\phi(\lambda+\rho)|_{B_2}=(\frac{5}{2}z+\frac{7}{2},\frac{5}{2}z+\frac{5}{2})$. By using RS algorithm, we can easily get that:
		\begin{enumerate}
			\item When $z\in\{-\frac{4}{5},-\frac{3}{5},-\frac{2}{5},-\frac{1}{5}\}$, $a(w_{\lambda})=4$, and $\operatorname{GKdim} L(\lambda)=20<\dim(\fu)$. By Lemma \ref{reducible}, we can obtain that these points are reducible points.
			\item When $z\in\{-\frac{9}{5},-\frac{8}{5},-\frac{7}{5},-\frac{6}{5}\}$, $a(w_{\lambda})=1$, and $\operatorname{GKdim} L(\lambda)=23=\dim(\fu)$. By Lemma \ref{reducible}, we can obtain that these points are irreducible points.
		\end{enumerate} 
		%%%z=1/5结束
		%%%z=1/6
		%%%phi_lambda与1个类别的李代数同构
		%%种类数只有一种
		%%isomorphism[0 1]
		%%a函数的集合;[3 3]
		\item If $\lambda$ is one-sixth-integral, we can easily check that the simple system of $\Delta_{[\lambda]}$ is $\Pi=\{\beta_1=(1,0,1,0),\beta_2=(0,1,-1,0)\}$. The Dynkin diagram of $\Delta_{[\lambda]}$ is:
		\begin{Large}
			\begin{center}
				$\dynkin[labels={\beta_1,\beta_2}]A2$
			\end{center}
		\end{Large}
		and with the comparison with Dynkin diagrams of classical Lie algebra, we can easily get the isomorphism $\phi:\Delta_{[\lambda]}\to A_2$. Then $\phi(\lambda+\rho)|_{A_2}=(4z+5,-2z-2,-2z-3)$. By using RS algorithm, we can easily get that:
		\begin{enumerate}
			\item When $z\in\{-\frac{5}{6},-\frac{1}{6}\}$, $a(w_{\lambda})=3$, and $\operatorname{GKdim} L(\lambda)=21<\dim(\fu)$. By Lemma \ref{reducible}, we can obtain that these points are reducible points.
			\item When $z\in\{-\frac{11}{6},-\frac{7}{6}\}$, $a(w_{\lambda})=1$, and $\operatorname{GKdim} L(\lambda)=23=\dim(\fu)$. By Lemma \ref{reducible}, we can obtain that these points are irreducible points.
		\end{enumerate} 
		%%%z=1/6结束
		%%%z=1/7
		%%%phi_lambda与2个类别的李代数同构
		%%isomorphism[0]
		%%isomorphism[0]
		%%a函数的集合;[2 2 2 2 2 2]
		\item If $\lambda$ is one-seventh-integral, we can easily check that $\Delta_{[\lambda]}=\Delta_1\bigcup\Delta_2$, where the simple system of $\Delta_1$ is $\Pi_1=\{\beta_1=(0,1,-1,0)\}$, and the simple system of $\Delta_2$ is $\Pi_2=\{\gamma_1=(\frac{1}{2},\frac{1}{2},\frac{1}{2},-\frac{1}{2})\}$. The Dynkin diagram of $\Delta_{[\lambda]}$ is:
		\begin{Large}
			\begin{center}
				$\dynkin[labels={\beta_1}]A1 \ \ \ \ \dynkin[labels={\gamma_1}]A1$
			\end{center}
		\end{Large}
		and with the comparison with Dynkin diagrams of classical Lie algebra, we can easily get the isomorphism $\phi:\Delta_{[\lambda]}\to A_1\times \widetilde{ A_1}$. Then $\phi(\lambda+\rho)|_{ A_1}=(\frac{1}{2},-\frac{1}{2})$, $\phi(\lambda+\rho)|_{\widetilde{ A_1}}=(\frac{7}{2}z+\frac{9}{2},-\frac{7}{2}z-\frac{9}{2})$. By using RS algorithm, we can easily get that:
		\begin{enumerate}
			\item When $z\in\{-\frac{6}{7},-\frac{5}{7},-\frac{4}{7},-\frac{3}{7},-\frac{2}{7},-\frac{8}{7}\}$, $a(w_{\lambda})=2$, and $\operatorname{GKdim} L(\lambda)=22<\dim(\fu)$. By Lemma \ref{reducible}, we can obtain that these points are reducible points.
			\item When $z\in\{-\frac{13}{7},-\frac{12}{7},-\frac{11}{7},-\frac{10}{7},-\frac{9}{7},-\frac{15}{7}\}$, $a(w_{\lambda})=1$, and $\operatorname{GKdim} L(\lambda)=23=\dim(\fu)$. By Lemma \ref{reducible}, we can obtain that these points are irreducible points.
		\end{enumerate} 
		%%%z=1/7结束
		%%%z=1/8
		%%%phi_lambda与2个类别的李代数同构
		%%isomorphism[0]
		%%isomorphism[0]
		%%a函数的集合;[2 2 2 2]
		\item If $\lambda$ is one-eighth-integral, we can easily check that $\Delta_{[\lambda]}=\Delta_1\bigcup\Delta_2$, where the simple system of $\Delta_1$ is $\Pi_1=\{\beta_1=(0,1,-1,0)\}$, and the simple system of $\Delta_2$ is $\Pi_2=\{\gamma_1=(\frac{1}{2},\frac{1}{2},\frac{1}{2},\frac{1}{2})\}$. The Dynkin diagram of $\Delta_{[\lambda]}$ is:
		\begin{Large}
			\begin{center}
				$\dynkin[labels={\beta_1}]A1 \ \ \ \ \dynkin[labels={\gamma_1}]A1$
			\end{center}
		\end{Large}
		and with the comparison with Dynkin diagrams of classical Lie algebra, we can easily get the isomorphism $\phi:\Delta_{[\lambda]}\to A_1\times \widetilde{ A_1}$. Then $\phi(\lambda+\rho)|_{ A_1}=(\frac{1}{2},-\frac{1}{2})$, $\phi(\lambda+\rho)|_{\widetilde{ A_1}}=(4z+5,-4z-5)$. By using RS algorithm, we can easily get that:
		\begin{enumerate}
			\item When $z\in\{-\frac{7}{8},-\frac{5}{8},-\frac{3}{8},-\frac{9}{8}\}$, $a(w_{\lambda})=2$, and $\operatorname{GKdim} L(\lambda)=22<\dim(\fu)$. By Lemma \ref{reducible}, we can obtain that these points are reducible points.
			\item When $z\in\{-\frac{15}{8},-\frac{13}{8},-\frac{11}{8},-\frac{17}{8}\}$, $a(w_{\lambda})=1$, and $\operatorname{GKdim} L(\lambda)=23=\dim(\fu)$. By Lemma \ref{reducible}, we can obtain that these points are irreducible points.
		\end{enumerate} 
		%%%z=1/8结束
		%%%z=1/9
		%%%phi_lambda与2个类别的李代数同构
		%%isomorphism[0]
		%%isomorphism[0]
		%%a函数的集合;[2 2 2 2 2 2]
		\item If $\lambda$ is one-ninth-integral, we can easily check that $\Delta_{[\lambda]}=\Delta_1\bigcup\Delta_2$, where the simple system of $\Delta_1$ is $\Pi_1=\{\beta_1=(1,0,0,0)\}$, and the simple system of $\Delta_2$ is $\Pi_2=\{\gamma_1=(0,1,-1,0)\}$. The Dynkin diagram of $\Delta_{[\lambda]}$ is:
		\begin{Large}
			\begin{center}
				$\dynkin[labels={\beta_1}]A1 \ \ \ \ \dynkin[labels={\gamma_1}]A1$
			\end{center}
		\end{Large}
		and with the comparison with Dynkin diagrams of classical Lie algebra, we can easily get the isomorphism $\phi:\Delta_{[\lambda]}\to \widetilde{ A_1}\times A_1$. Then $\phi(\lambda+\rho)|_{\widetilde{ A_1}}=(\frac{9}{2}z+\frac{11}{2},-\frac{9}{2}z-\frac{11}{2})$, $\phi(\lambda+\rho)|_{ A_1}=(\frac{1}{2},-\frac{1}{2})$. By using RS algorithm, we can easily get that:
		\begin{enumerate}
			\item When $z\in\{-\frac{8}{9},-\frac{7}{9},-\frac{5}{9},-\frac{4}{9},-\frac{2}{9},-\frac{10}{9}\}$, $a(w_{\lambda})=2$, and $\operatorname{GKdim} L(\lambda)=22<\dim(\fu)$. By Lemma \ref{reducible}, we can obtain that these points are reducible points.
			\item When $z\in\{-\frac{17}{9},-\frac{16}{9},-\frac{14}{9},-\frac{13}{9},-\frac{11}{9},-\frac{19}{9}\}$, $a(w_{\lambda})=1$, and $\operatorname{GKdim} L(\lambda)=23=\dim(\fu)$. By Lemma \ref{reducible}, we can obtain that these points are irreducible points.
		\end{enumerate} 
		%%%z=1/9结束
		\item If $\lambda$ is not belonging to any of the types mentioned above, one can easily check that $\Delta_{[\lambda]}\simeq  A_1$. The simple system of $\Delta_{[\lambda]}$ is $\Pi=\{\beta_1=(0,1,-1,0)\}$. We can get $a(w_{\lambda})=1$ by using RS algorithm, thus $\operatorname{GKdim} L(\lambda)=23=\dim(\fu)$. In this case $M_I(z\widehat{\xi_4})$ is irreducible.
		
		All in all, when $p=4$, $ M_I(z\widehat{\xi_4})$ is reducible if and only if
		\begin{align*}
			\begin{split}
				z&\in \{-1+\mathbb{Z}_{\geq 0}\}
				\\&\cup \{-\frac{1}{2}+\mathbb{Z}_{\geq 0}\} \\&\cup \{-\frac{2}{3}+\mathbb{Z}_{\geq 0}\} \cup \{-\frac{1}{3}+\mathbb{Z}_{\geq 0}\} \\&\cup \{-\frac{3}{4}+\mathbb{Z}_{\geq 0}\} \cup \{-\frac{1}{4}+\mathbb{Z}_{\geq 0}\} \\&\cup \{-\frac{4}{5}+\mathbb{Z}_{\geq 0}\} \cup \{-\frac{3}{5}+\mathbb{Z}_{\geq 0}\} \cup \{-\frac{2}{5}+\mathbb{Z}_{\geq 0}\} \cup \{-\frac{1}{5}+\mathbb{Z}_{\geq 0}\} \\&\cup \{-\frac{5}{6}+\mathbb{Z}_{\geq 0}\} \cup \{-\frac{1}{6}+\mathbb{Z}_{\geq 0}\} \\&\cup \{-\frac{6}{7}+\mathbb{Z}_{\geq 0}\} \cup \{-\frac{5}{7}+\mathbb{Z}_{\geq 0}\} \cup \{-\frac{4}{7}+\mathbb{Z}_{\geq 0}\} \cup \{-\frac{3}{7}+\mathbb{Z}_{\geq 0}\} \cup \{-\frac{2}{7}+\mathbb{Z}_{\geq 0}\} \cup \{-\frac{8}{7}+\mathbb{Z}_{\geq 0}\} \\&\cup \{-\frac{7}{8}+\mathbb{Z}_{\geq 0}\} \cup \{-\frac{5}{8}+\mathbb{Z}_{\geq 0}\} \cup \{-\frac{3}{8}+\mathbb{Z}_{\geq 0}\} \cup \{-\frac{9}{8}+\mathbb{Z}_{\geq 0}\} \\&\cup \{-\frac{8}{9}+\mathbb{Z}_{\geq 0}\} \cup \{-\frac{7}{9}+\mathbb{Z}_{\geq 0}\} \cup \{-\frac{5}{9}+\mathbb{Z}_{\geq 0}\} \cup \{-\frac{4}{9}+\mathbb{Z}_{\geq 0}\} \cup \{-\frac{2}{9}+\mathbb{Z}_{\geq 0}\} \cup \{-\frac{10}{9}+\mathbb{Z}_{\geq 0}\}\\&=\bigcup\limits_{i=5}^{6}\{-1+\frac{1}{i}\mathbb{Z}_{\geq 0}\}\cup\bigcup\limits_{i=7}^{9}\{-\frac{i+1}{i}+\frac{1}{i}\mathbb{Z}_{\geq 0}\}. 
			\end{split}
		\end{align*}
		
	\end{enumerate}
	So far, we have completed the proof of all cases of  type $F_4$.
\end{proof}

			\section{Reducibility of scalar generalized Verma modules for type $E$}\label{type_e}

Next we give the reducibility of scalar generalized Verma modules associated to minimal parabolic subalgebra for type $E_6$, $E_7$ and $E_8$.

\begin{Thm}\label{reducible_e6}
	Let $\fg$ is of type $E_6$, then $M_I(z\widehat{\xi_p})$ is reducible if and only if
	\begin{enumerate}
		\item  $p=1$,  $z\in\bigcup\limits_{i=6}^{10}\{-1+\frac{1}{i}\mathbb{Z}_{\geq 0}\}$; or
		
		\item  $p=2$,  $z\in \bigcup\limits_{i=5}^{9}\{-1+\frac{1}{i}\mathbb{Z}_{\geq 0}\}$; or
		
		\item  $p=3$,  $z\in \bigcup\limits_{i=5}^{7}\{-1+\frac{1}{i}\mathbb{Z}_{\geq 0}\}\cup\bigcup\limits_{i=8}^{9}\{-\frac{i+1}{i}+\frac{1}{i}\mathbb{Z}_{\geq 0}\}$; or
		
		\item  $p=4$,  $z\in \bigcup\limits_{i=4}^{8}\{-\frac{i+1}{i}+\frac{1}{i}\mathbb{Z}_{\geq 0}\}$; or
		
		\item  $p=5$, $z\in \bigcup\limits_{i=5}^{7}\{-1+\frac{1}{i}\mathbb{Z}_{\geq 0}\}\cup\bigcup\limits_{i=8}^{9}\{-\frac{i+1}{i}+\frac{1}{i}\mathbb{Z}_{\geq 0}\}$; or
		
		\item  $p=6$,  $z\in \bigcup\limits_{i=6}^{10}\{-1+\frac{1}{i}\mathbb{Z}_{\geq 0}\}$.
		
	\end{enumerate}
\end{Thm}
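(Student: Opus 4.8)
The plan is to follow exactly the case-by-case scheme carried out for $G_2$ and $F_4$ in Theorems \ref{reducible_g2} and \ref{reducible_f4}, now applied to each of the six nodes $p$ of the $E_6$ diagram. Since $|\Delta^+|=36$ and $\dim(\fu)=35$, the formula $\operatorname{GKdim} L(\lambda)=|\Delta^+|-{\bf a}(w_\lambda)$ together with Lemma \ref{reducible} collapses everything into a single inequality: $M_I(z\widehat{\xi_p})$ is reducible if and only if ${\bf a}(w_\lambda)\geq 2$. First I would fix Knapp's coordinates for $E_6$, record $\rho$, and write $\lambda+\rho=(z+1)\rho-z\xi_p$ using Lemma \ref{fundamental_weight}; this presents $\lambda+\rho$ as an explicit affine-linear function of $z$ for each $p$.

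Next, for each $p$ I would stratify the parameter line of $z$ by the integrality type of $\lambda$, i.e.\ by the denominator $i$ of $z$ in lowest terms. For generic $z$ the integral root system $\Delta_{[\lambda]}$ is just $A_1$ (note $\alpha_p\in\Delta_{[\lambda]}$ always, since $(\widehat{\xi_p},\alpha_p^\vee)=0$), whence ${\bf a}(w_\lambda)=1$ and the module is irreducible; reducibility can therefore occur only at the finitely many ``special'' denominators visible in the statement. For each such $i$ I would apply Proposition \ref{integral_root_system} to identify $\Delta_{[\lambda]}$ and its simple system, fix an isomorphism $\phi$ onto a product of irreducible root systems (classical factors, possibly carrying the short-root tilde, and occasionally an exceptional factor such as $A_5$, $D_5$ or $E_6$ itself), and compute $\phi(\lambda+\rho)$ explicitly. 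The value ${\bf a}(w_\lambda)={\bf a}(\phi(w_\lambda))$ is then read off by the RS algorithm of \S\ref{R-S} together with Proposition \ref{a_function}(7) when all factors are classical, and by PyCox when an exceptional factor appears.

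Having located, for each admissible $i$, the first parameter $z=a$ with ${\bf a}(w_\lambda)\geq 2$ and checked that the parameter one step below is irreducible, I would invoke Lemma \ref{GKdown} to propagate reducibility along $a+\tfrac1i\mathbb{Z}_{\geq0}$; assembling these strands over all admissible $i$ yields the stated unions. To halve the labour I would use the order-two diagram automorphism of $E_6$ that interchanges $\alpha_1\leftrightarrow\alpha_6$ and $\alpha_3\leftrightarrow\alpha_5$ while fixing $\alpha_2,\alpha_4$: it identifies the case $p=1$ with $p=6$ and $p=3$ with $p=5$, so only $p\in\{1,2,3,4\}$ need be computed directly, in agreement with the matching entries in the statement.

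The main obstacle is twofold and purely computational rather than conceptual. First, unlike the $G_2$ and $F_4$ cases where every $\Delta_{[\lambda]}$ is classical and ${\bf a}$ is accessible through Young tableaux, here the integral root systems can be of exceptional type, for which ${\bf a}(w_\lambda)$ must be extracted from PyCox via \texttt{klcellrepem}; care is needed to feed PyCox the correct word $w_\lambda\in W_{[\mu]}^S$ under the identification $\phi$. Second, the sheer volume of cases---six nodes, several admissible denominators each, and for every denominator both a reducible representative and its irreducible neighbour to certify the endpoint---demands careful bookkeeping to be sure that no admissible denominator is overlooked and that the finitely many strands assemble into precisely the closed-form unions $\bigcup_i\{\,\cdot+\tfrac1i\mathbb{Z}_{\geq0}\}$ claimed in the statement.
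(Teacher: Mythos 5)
Your proposal follows essentially the same route as the paper's proof: stratify $z$ by its integrality type (denominator $i$), identify $\Delta_{[\lambda]}$ and its simple system via Proposition \ref{integral_root_system}, compute ${\bf a}(w_\lambda)$ by the RS algorithm on the classical factors (or by PyCox in the integral case), and combine Lemma \ref{reducible} with Lemma \ref{GKdown} to certify the first diagonal-reducible point of each strand and propagate along $a+\tfrac{1}{i}\mathbb{Z}_{\geq 0}$ --- exactly the procedure the paper carries out in detail for $p=1$, omitting $p=2,\dots,6$ as tedious. Your two refinements --- recasting the criterion as ${\bf a}(w_\lambda)\geq 2$ (valid here since $(\lambda+\rho,\alpha_p^{\vee})=1$ forces ${\bf a}(w_\lambda)\geq 1$) and using the order-two diagram automorphism to identify $p=1\leftrightarrow 6$ and $p=3\leftrightarrow 5$ --- are sound labor-saving observations but do not constitute a different method.
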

\begin{proof}
	Let $\fg$ be of type $E_6$ and $\Delta^+(\fl)=\{\alpha_p\}$. 
	
	% {\bf Step 1.} First we consider ee case $p=1$. By Lemma \ref{fundamental_weight} we can get that 
	%     $$\eta=\rho-\xi_1=(0,1,2,3,4,-\frac{10}{3},-\frac{10}{3},\frac{10}{3}).$$
	% $$\lambda+\rho=(0,z+1,2z+2,3z+3,4z+4,-\frac{10}{3}z-4,-\frac{10}{3}z-4,\frac{10}{3}z+4).$$
	% \begin{enumerate}
		%     \item If $\lambda$ is integral, then we have the following.
		%     \begin{enumerate}
			%         \item When $z=0$, then $\lambda+\rho=(0,1,2,3,4,-4,-4,4)$, which is dominant integral, by Proposition  \ref{integral_case} we can get $\operatorname{GKdim} L(\lambda)=0<\dim(\fu)$.
			
			% By Lemma \ref{reducible} we can obtain that $z=0$ is a diagonal-reducible point. 

			%     \end{enumerate}

		% \end{enumerate}
	
	{\bf Step 1.} Let $\Delta^+(\fl)=\{(\frac{1}{2},-\frac{1}{2},-\frac{1}{2},-\frac{1}{2},-\frac{1}{2},-\frac{1}{2},-\frac{1}{2},\frac{1}{2})\}=\{\alpha_1\}$. When $M_I(\lambda)$ is of scalar type, we know that $\lambda=z(\xi_2+\xi_3+\xi_4+\xi_5+\xi_6)=z(\rho-\xi_1)$ for some $z\in\mathbb{R}$, where $\xi_1$ is the fundamental weight of simple root $\alpha_1$ and $\xi_1=(0,0,0,0,0,-\frac{2}{3},-\frac{2}{3},\frac{2}{3})$.
	In \cite{Knapp} we know that $\rho=(0,1,2,3,4,-4,-4,4)$ and $\lambda+\rho=(0,1+z,2+2z,3+3z,4+4z,-4-\frac{10}{3}z,-4-\frac{10}{3}z,4+\frac{10}{3}z)$.
	\begin{enumerate}
		\item If $\lambda$ is integral, then we have the following.
		\begin{enumerate}
			\item When $z=-1$, then $\lambda+\rho=(0,0,0,0,0,-\frac{2}{3},-\frac{2}{3},\frac{2}{3})$, we can get $w_\lambda=[5, 4, 3, 2, 0, 1, 3, 2, 4, 3, 1, 5, 4, 3, 2, 0]$. By using PyCox we can get that $\operatorname{GKdim} L(\lambda)=34<\dim(\fu)$.
			By Lemma \ref{reducible} we can obtain that $z=-1$ is a diagonal-reducible point.
			\item When $z=-2$, then $\lambda+\rho=(0,-1,-2,-3,-4,\frac{8}{3},\frac{8}{3},-\frac{8}{3})$, we can get $w_\lambda=[0]$. By using PyCox we can get that $\operatorname{GKdim} L(\lambda)=35=\dim(\fu)$.
			By Lemma \ref{reducible} we can obtain that $z=-2$ is an irreducible point.
		\end{enumerate}
		%%%z=1/2
		%%%phi_lambda与2个类别的李代数同构
		%%isomorphism[0 4 2 1 3]
		%%isomorphism[0]
		%%a函数的集合;[16]
		\item If $\lambda$ is half-integral, we can easily check that $\Delta_{[\lambda]}=\Delta_1\bigcup\Delta_2$, where the simple system of $\Delta_1$ is $\Pi_1=\{\beta_1=(1,0,1,0,0,0,0,0),\beta_2=(0,1,0,1,0,0,0,0),\\ \beta_3=(-1,0,1,0,0,0,0,0),\beta_4=(0,0,-1,0,1,0,0,0),\beta_5=(\frac{1}{2},-\frac{1}{2},-\frac{1}{2},-\frac{1}{2},-\frac{1}{2},
		\\
		-\frac{1}{2},-\frac{1}{2},\frac{1}{2})\}$, and the simple system of $\Delta_2$ is $\Pi_2=\{\gamma_1=(0,-1,0,1,0,0,0,0)\}$. The Dynkin diagram of $\Delta_{[\lambda]}$ is:
		\begin{Large}
			\begin{center}
				$\dynkin[labels={\beta_1,\beta_4,\beta_3,\beta_5,\beta_2}]A5 \ \ \ \ \dynkin[labels={\gamma_1}]A1$
			\end{center}
		\end{Large}
		and with the comparison with Dynkin diagrams of classical Lie algebra, we can easily get the isomorphism $\phi:\Delta_{[\lambda]}\to A_5\times A_1$. Then $\phi(\lambda+\rho)|_{A_5}=(\frac{14}{3}z+5,\frac{8}{3}z+3,\frac{2}{3}z+1,-\frac{4}{3}z-1,-\frac{4}{3}z-2,-\frac{16}{3}z-6)$, $\phi(\lambda+\rho)|_{A_1}=(z+1,-z-1)$. By using RS algorithm, we can easily get that:
		\begin{enumerate}
			\item When $z=-\frac{1}{2}$, $a(w_{\lambda})=16$, and $\operatorname{GKdim} L(\lambda)=20<\dim(\fu)$. By Lemma \ref{reducible}, we can obtain that this point is an reducible point.
			\item When $z=-\frac{3}{2}$, $a(w_{\lambda})=1$, and $\operatorname{GKdim} L(\lambda)=35=\dim(\fu)$. By Lemma \ref{reducible}, we can obtain that this point is an irreducible point.
		\end{enumerate} 
		%%%z=1/2结束
		%%%z=1/3
		%%%phi_lambda与2个类别的李代数同构
		%%isomorphism[0]
		%%isomorphism[2 0 3 1]
		%%a函数的集合;[11 11]
		\item If $\lambda$ is one-third-integral, we can easily check that $\Delta_{[\lambda]}=\Delta_1\bigcup\Delta_2$, where the simple system of $\Delta_1$ is $\Pi_1=\{\beta_1=(1,0,0,1,0,0,0,0)\}$, and the simple system of $\Delta_2$ is $\Pi_2=\{\gamma_1=(0,1,1,0,0,0,0,0),\gamma_2=(-1,0,0,1,0,0,0,0),\gamma_3=(0,-1,0,0,1,0,0,0),\gamma_4=(\frac{1}{2},-\frac{1}{2},-\frac{1}{2},-\frac{1}{2},-\frac{1}{2},-\frac{1}{2},-\frac{1}{2},\frac{1}{2})\}$. The Dynkin diagram of $\Delta_{[\lambda]}$ is:
		\begin{Large}
			\begin{center}
				$\dynkin[labels={\beta_1}]A1 \ \ \ \ \dynkin[labels={\gamma_2,\gamma_4,\gamma_1,\gamma_3}]A4$
			\end{center}
		\end{Large}
		and with the comparison with Dynkin diagrams of classical Lie algebra, we can easily get the isomorphism $\phi:\Delta_{[\lambda]}\to A_1\times A_4$. Then $\phi(\lambda+\rho)|_{A_1}=(\frac{3}{2}z+\frac{3}{2},-\frac{3}{2}z-\frac{3}{2})$, $\phi(\lambda+\rho)|_{A_4}=(\frac{21}{5}z+\frac{24}{5},\frac{6}{5}z+\frac{9}{5},\frac{6}{5}z+\frac{4}{5},-\frac{9}{5}z-\frac{11}{5},-\frac{24}{5}z-\frac{26}{5})$. By using RS algorithm, we can easily get that:
		\begin{enumerate}
			\item When $z\in\{-\frac{2}{3},-\frac{1}{3}\}$, $a(w_{\lambda})=11$, and $\operatorname{GKdim} L(\lambda)=25<\dim(\fu)$. By Lemma \ref{reducible}, we can obtain that these points are reducible points.
			\item When $z\in\{-\frac{5}{3},-\frac{4}{3}\}$, $a(w_{\lambda})=1$, and $\operatorname{GKdim} L(\lambda)=35=\dim(\fu)$. By Lemma \ref{reducible}, we can obtain that these points are irreducible points.
		\end{enumerate} 
		%%%z=1/3结束
		%%%z=1/4
		%%%phi_lambda与2个类别的李代数同构
		%%isomorphism[0]
		%%isomorphism[0 2 1]
		%%a函数的集合;[7 7]
		\item If $\lambda$ is one-fourth-integral, we can easily check that $\Delta_{[\lambda]}=\Delta_1\bigcup\Delta_2$, where the simple system of $\Delta_1$ is $\Pi_1=\{\beta_1=(1,0,0,0,1,0,0,0)\}$, and the simple system of $\Delta_2$ is $\Pi_2=\{\gamma_1=(0,1,0,1,0,0,0,0),\gamma_2=(-1,0,0,0,1,0,0,\\
		0),\gamma_3=(\frac{1}{2},-\frac{1}{2},-\frac{1}{2},-\frac{1}{2},-\frac{1}{2},-\frac{1}{2},-\frac{1}{2},\frac{1}{2})\}$. The Dynkin diagram of $\Delta_{[\lambda]}$ is:
		\begin{Large}
			\begin{center}
				$\dynkin[labels={\beta_1}]A1 \ \ \ \ \dynkin[labels={\gamma_1,\gamma_3,\gamma_2}]A3$
			\end{center}
		\end{Large}
		and with the comparison with Dynkin diagrams of classical Lie algebra, we can easily get the isomorphism $\phi:\Delta_{[\lambda]}\to A_1\times A_3$. Then $\phi(\lambda+\rho)|_{A_1}=(2z+2,-2z-2)$, $\phi(\lambda+\rho)|_{A_3}=(4z+\frac{9}{2},\frac{1}{2},-\frac{1}{2},-4z-\frac{9}{2})$. By using RS algorithm, we can easily get that:
		\begin{enumerate}
			\item When $z\in\{-\frac{3}{4},-\frac{1}{4}\}$, $a(w_{\lambda})=7$, and $\operatorname{GKdim} L(\lambda)=29<\dim(\fu)$. By Lemma \ref{reducible}, we can obtain that these points are reducible points.
			\item When $z\in\{-\frac{7}{4},-\frac{5}{4}\}$, $a(w_{\lambda})=1$, and $\operatorname{GKdim} L(\lambda)=35=\dim(\fu)$. By Lemma \ref{reducible}, we can obtain that these points are irreducible points.
		\end{enumerate} 
		%%%z=1/4结束
		%%%z=1/5
		%%%phi_lambda与1个类别的李代数同构
		%%种类数只有一种
		%%isomorphism[0 2 1]
		%%a函数的集合;[6 6 6 6]
		\item If $\lambda$ is one-fifth-integral, we can easily check that the simple system of $\Delta_{[\lambda]}$ is $\Pi=\{\beta_1=(0,1,0,0,1,0,0,0),\beta_2=(0,0,1,1,0,0,0,0),\beta_3=(\frac{1}{2},-\frac{1}{2},-\frac{1}{2},-\frac{1}{2},$
        
        $-\frac{1}{2},-\frac{1}{2},-\frac{1}{2},\frac{1}{2})\}$. The Dynkin diagram of $\Delta_{[\lambda]}$ is:
		\begin{Large}
			\begin{center}
				$\dynkin[labels={\beta_1,\beta_3,\beta_2}]A3$
			\end{center}
		\end{Large}
		and with the comparison with Dynkin diagrams of classical Lie algebra, we can easily get the isomorphism $\phi:\Delta_{[\lambda]}\to A_3$. Then $\phi(\lambda+\rho)|_{A_3}=(5z+\frac{11}{2},\frac{1}{2},-\frac{1}{2},-5z-\frac{11}{2})$. By using RS algorithm, we can easily get that:
		\begin{enumerate}
			\item When $z\in\{-\frac{4}{5},-\frac{3}{5},-\frac{2}{5},-\frac{1}{5}\}$, $a(w_{\lambda})=6$, and $\operatorname{GKdim} L(\lambda)=30<\dim(\fu)$. By Lemma \ref{reducible}, we can obtain that these points are reducible points.
			\item When $z\in\{-\frac{9}{5},-\frac{8}{5},-\frac{7}{5},-\frac{6}{5}\}$, $a(w_{\lambda})=1$, and $\operatorname{GKdim} L(\lambda)=35=\dim(\fu)$. By Lemma \ref{reducible}, we can obtain that these points are irreducible points.
		\end{enumerate} 
		%%%z=1/5结束
		%%%z=1/6
		%%%phi_lambda与2个类别的李代数同构
		%%isomorphism[0 1]
		%%isomorphism[0]
		%%a函数的集合;[4 4]
		\item If $\lambda$ is one-sixth-integral, we can easily check that $\Delta_{[\lambda]}=\Delta_1\bigcup\Delta_2$, where the simple system of $\Delta_1$ is $\Pi_1=\{\beta_1=(0,0,1,0,1,0,0,0),\beta_2=(\frac{1}{2},-\frac{1}{2},-\frac{1}{2},-\frac{1}{2},$
        
        $-\frac{1}{2},-\frac{1}{2},-\frac{1}{2},\frac{1}{2})\}$, and the simple system of $\Delta_2$ is $\Pi_2=\{\gamma_1=(-\frac{1}{2},\frac{1}{2},\frac{1}{2},\frac{1}{2},-\frac{1}{2},$
        
        $-\frac{1}{2},-\frac{1}{2},\frac{1}{2})\}$. The Dynkin diagram of $\Delta_{[\lambda]}$ is:
		\begin{Large}
			\begin{center}
				$\dynkin[labels={\beta_1,\beta_2}]A2 \ \ \ \ \dynkin[labels={\gamma_1}]A1$
			\end{center}
		\end{Large}
		and with the comparison with Dynkin diagrams of classical Lie algebra, we can easily get the isomorphism $\phi:\Delta_{[\lambda]}\to A_2\times A_1$. Then $\phi(\lambda+\rho)|_{A_2}=(4z+\frac{13}{3},-2z-\frac{5}{3},-2z-\frac{8}{3})$, $\phi(\lambda+\rho)|_{A_1}=(3z+\frac{7}{2},-3z-\frac{7}{2})$. By using RS algorithm, we can easily get that:
		\begin{enumerate}
			\item When $z\in\{-\frac{5}{6},-\frac{1}{6}\}$, $a(w_{\lambda})=4$, and $\operatorname{GKdim} L(\lambda)=32<\dim(\fu)$. By Lemma \ref{reducible}, we can obtain that these points are reducible points.
			\item When $z\in\{-\frac{11}{6},-\frac{7}{6}\}$, $a(w_{\lambda})=1$, and $\operatorname{GKdim} L(\lambda)=35=\dim(\fu)$. By Lemma \ref{reducible}, we can obtain that these points are irreducible points.
		\end{enumerate} 
		%%%z=1/6结束
		%%%z=1/7
		%%%phi_lambda与2个类别的李代数同构
		%%isomorphism[0 1]
		%%isomorphism[0]
		%%a函数的集合;[4 4 4 4 4 4]
		\item If $\lambda$ is one-seventh-integral, we can easily check that $\Delta_{[\lambda]}=\Delta_1\bigcup\Delta_2$, where the simple system of $\Delta_1$ is $\Pi_1=\{\beta_1=(0,0,0,1,1,0,0,0),\beta_2=(\frac{1}{2},-\frac{1}{2},-\frac{1}{2},-\frac{1}{2},$
        
        $-\frac{1}{2},-\frac{1}{2},-\frac{1}{2},\frac{1}{2})\}$, and the simple system of $\Delta_2$ is $\Pi_2=\{\gamma_1=(-\frac{1}{2},\frac{1}{2},\frac{1}{2},-\frac{1}{2},\frac{1}{2},$
        
        $-\frac{1}{2},-\frac{1}{2},\frac{1}{2})\}$. The Dynkin diagram of $\Delta_{[\lambda]}$ is:
		\begin{Large}
			\begin{center}
				$\dynkin[labels={\beta_1,\beta_2}]A2 \ \ \ \ \dynkin[labels={\gamma_1}]A1$
			\end{center}
		\end{Large}
		and with the comparison with Dynkin diagrams of classical Lie algebra, we can easily get the isomorphism $\phi:\Delta_{[\lambda]}\to A_2\times A_1$. Then $\phi(\lambda+\rho)|_{A_2}=(\frac{14}{3}z+5,-\frac{7}{3}z-2,-\frac{7}{3}z-3)$, $\phi(\lambda+\rho)|_{A_1}=(\frac{7}{2}z+4,-\frac{7}{2}z-4)$. By using RS algorithm, we can easily get that:
		\begin{enumerate}
			\item When $z\in\{-\frac{6}{7},-\frac{5}{7},-\frac{4}{7},-\frac{3}{7},-\frac{2}{7},-\frac{1}{7}\}$, $a(w_{\lambda})=4$, and $\operatorname{GKdim} L(\lambda)=32<\dim(\fu)$. By Lemma \ref{reducible}, we can obtain that these points are reducible points.
			\item When $z\in\{-\frac{13}{7},-\frac{12}{7},-\frac{11}{7},-\frac{10}{7},-\frac{9}{7},-\frac{8}{7}\}$, $a(w_{\lambda})=1$, and $\operatorname{GKdim} L(\lambda)=35=\dim(\fu)$. By Lemma \ref{reducible}, we can obtain that these points are irreducible points.
		\end{enumerate} 
		%%%z=1/7结束
		%%%z=1/8
		%%%phi_lambda与2个类别的李代数同构
		%%isomorphism[0]
		%%isomorphism[0]
		%%a函数的集合;[2 2 2 2]
		\item If $\lambda$ is one-eighth-integral, we can easily check that $\Delta_{[\lambda]}=\Delta_1\bigcup\Delta_2$, where the simple system of $\Delta_1$ is $\Pi_1=\{\beta_1=(\frac{1}{2},-\frac{1}{2},-\frac{1}{2},-\frac{1}{2},-\frac{1}{2},-\frac{1}{2},-\frac{1}{2},\frac{1}{2})\}$, and the simple system of $\Delta_2$ is $\Pi_2=\{\gamma_1=(-\frac{1}{2},\frac{1}{2},-\frac{1}{2},\frac{1}{2},\frac{1}{2},-\frac{1}{2},-\frac{1}{2},\frac{1}{2})\}$. The Dynkin diagram of $\Delta_{[\lambda]}$ is:
		\begin{Large}
			\begin{center}
				$\dynkin[labels={\beta_1}]A1 \ \ \ \ \dynkin[labels={\gamma_1}]A1$
			\end{center}
		\end{Large}
		and with the comparison with Dynkin diagrams of classical Lie algebra, we can easily get the isomorphism $\phi:\Delta_{[\lambda]}\to A_1\times A_1$. Then $\phi(\lambda+\rho)|_{A_1}=(\frac{1}{2},-\frac{1}{2})$, $\phi(\lambda+\rho)|_{A_1}=(4z+\frac{9}{2},-4z-\frac{9}{2})$. By using RS algorithm, we can easily get that:
		\begin{enumerate}
			\item When $z\in\{-\frac{7}{8},-\frac{5}{8},-\frac{3}{8},-\frac{1}{8}\}$, $a(w_{\lambda})=2$, and $\operatorname{GKdim} L(\lambda)=34<\dim(\fu)$. By Lemma \ref{reducible}, we can obtain that these points are reducible points.
			\item When $z\in\{-\frac{15}{8},-\frac{13}{8},-\frac{11}{8},-\frac{9}{8}\}$, $a(w_{\lambda})=1$, and $\operatorname{GKdim} L(\lambda)=35=\dim(\fu)$. By Lemma \ref{reducible}, we can obtain that these points are irreducible points.
		\end{enumerate} 
		%%%z=1/8结束
		%%%z=1/9
		%%%phi_lambda与2个类别的李代数同构
		%%isomorphism[0]
		%%isomorphism[0]
		%%a函数的集合;[2 2 2 2 2 2]
		\item If $\lambda$ is one-ninth-integral, we can easily check that $\Delta_{[\lambda]}=\Delta_1\bigcup\Delta_2$, where the simple system of $\Delta_1$ is $\Pi_1=\{\beta_1=(\frac{1}{2},-\frac{1}{2},-\frac{1}{2},-\frac{1}{2},-\frac{1}{2},-\frac{1}{2},-\frac{1}{2},\frac{1}{2})\}$, and the simple system of $\Delta_2$ is $\Pi_2=\{\gamma_1=(-\frac{1}{2},-\frac{1}{2},\frac{1}{2},\frac{1}{2},\frac{1}{2},-\frac{1}{2},-\frac{1}{2},\frac{1}{2})\}$. The Dynkin diagram of $\Delta_{[\lambda]}$ is:
		\begin{Large}
			\begin{center}
				$\dynkin[labels={\beta_1}]A1 \ \ \ \ \dynkin[labels={\gamma_1}]A1$
			\end{center}
		\end{Large}
		and with the comparison with Dynkin diagrams of classical Lie algebra, we can easily get the isomorphism $\phi:\Delta_{[\lambda]}\to A_1\times A_1$. Then $\phi(\lambda+\rho)|_{A_1}=(\frac{1}{2},-\frac{1}{2})$, $\phi(\lambda+\rho)|_{A_1}=(\frac{9}{2}z+5,-\frac{9}{2}z-5)$. By using RS algorithm, we can easily get that:
		\begin{enumerate}
			\item When $z\in\{-\frac{8}{9},-\frac{7}{9},-\frac{5}{9},-\frac{4}{9},-\frac{2}{9},-\frac{1}{9}\}$, $a(w_{\lambda})=2$, and $\operatorname{GKdim} L(\lambda)=34<\dim(\fu)$. By Lemma \ref{reducible}, we can obtain that these points are reducible points.
			\item When $z\in\{-\frac{17}{9},-\frac{16}{9},-\frac{14}{9},-\frac{13}{9},-\frac{11}{9},-\frac{10}{9}\}$, $a(w_{\lambda})=1$, and $\operatorname{GKdim} L(\lambda)=35=\dim(\fu)$. By Lemma \ref{reducible}, we can obtain that these points are irreducible points.
		\end{enumerate} 
		%%%z=1/9结束
		%%%z=1/10
		%%%phi_lambda与2个类别的李代数同构
		%%isomorphism[0]
		%%isomorphism[0]
		%%a函数的集合;[2 2 2 2]
		\item If $\lambda$ is one-tenth-integral, we can easily check that $\Delta_{[\lambda]}=\Delta_1\bigcup\Delta_2$, where the simple system of $\Delta_1$ is $\Pi_1=\{\beta_1=(\frac{1}{2},\frac{1}{2},\frac{1}{2},\frac{1}{2},\frac{1}{2},-\frac{1}{2},-\frac{1}{2},\frac{1}{2})\}$, and the simple system of $\Delta_2$ is $\Pi_2=\{\gamma_1=(\frac{1}{2},-\frac{1}{2},-\frac{1}{2},-\frac{1}{2},-\frac{1}{2},-\frac{1}{2},-\frac{1}{2},\frac{1}{2})\}$. The Dynkin diagram of $\Delta_{[\lambda]}$ is:
		\begin{Large}
			\begin{center}
				$\dynkin[labels={\beta_1}]A1 \ \ \ \ \dynkin[labels={\gamma_1}]A1$
			\end{center}
		\end{Large}
		and with the comparison with Dynkin diagrams of classical Lie algebra, we can easily get the isomorphism $\phi:\Delta_{[\lambda]}\to A_1\times A_1$. Then $\phi(\lambda+\rho)|_{A_1}=(5z+\frac{11}{2},-5z-\frac{11}{2})$, $\phi(\lambda+\rho)|_{A_1}=(\frac{1}{2},-\frac{1}{2})$. By using RS algorithm, we can easily get that:
		\begin{enumerate}
			\item When $z\in\{-\frac{9}{10},-\frac{7}{10},-\frac{3}{10},-\frac{1}{10}\}$, $a(w_{\lambda})=2$, and $\operatorname{GKdim} L(\lambda)=34<\dim(\fu)$. By Lemma \ref{reducible}, we can obtain that these points are reducible points.
			\item When $z\in\{-\frac{19}{10},-\frac{17}{10},-\frac{13}{10},-\frac{11}{10}\}$, $a(w_{\lambda})=1$, and $\operatorname{GKdim} L(\lambda)=35=\dim(\fu)$. By Lemma \ref{reducible}, we can obtain that these points are irreducible points.
		\end{enumerate} 
		%%%z=1/10结束
		\item If $\lambda$ is not belonging to any of the types mentioned above, one can easily check that $\Delta_{[\lambda]}\simeq  A_1$. The simple system of $\Delta_{[\lambda]}$ is $\Pi=\{\beta_1=(\frac{1}{2},-\frac{1}{2},-\frac{1}{2},-\frac{1}{2},-\frac{1}{2},$
        
        $-\frac{1}{2},-\frac{1}{2},\frac{1}{2})\}$. We can get $a(w_{\lambda})=1$ by using RS algorithm, thus $\operatorname{GKdim} L(\lambda)=35=\dim(\fu)$. In this case, $M_I(z\widehat{\xi_1})$ is irreducible.
		
		All in all, when $p=1$, $ M_I(z\widehat{\xi_1})$ is reducible if and only if
		\begin{align*}
				z&\in \{-1+\mathbb{Z}_{\geq 0}\}
				\cup \{-\frac{1}{2}+\mathbb{Z}_{\geq 0}\} \\&\cup \{-\frac{2}{3}+\mathbb{Z}_{\geq 0}\} \cup \{-\frac{1}{3}+\mathbb{Z}_{\geq 0}\} \\&\cup \{-\frac{3}{4}+\mathbb{Z}_{\geq 0}\} \cup \{-\frac{1}{4}+\mathbb{Z}_{\geq 0}\} \\&\cup \{-\frac{4}{5}+\mathbb{Z}_{\geq 0}\} \cup \{-\frac{3}{5}+\mathbb{Z}_{\geq 0}\} \cup \{-\frac{2}{5}+\mathbb{Z}_{\geq 0}\} \cup \{-\frac{1}{5}+\mathbb{Z}_{\geq 0}\} \\&\cup \{-\frac{5}{6}+\mathbb{Z}_{\geq 0}\} \cup \{-\frac{1}{6}+\mathbb{Z}_{\geq 0}\} \\&\cup \{-\frac{6}{7}+\mathbb{Z}_{\geq 0}\} \cup \{-\frac{5}{7}+\mathbb{Z}_{\geq 0}\} \cup \{-\frac{4}{7}+\mathbb{Z}_{\geq 0}\} \cup \{-\frac{3}{7}+\mathbb{Z}_{\geq 0}\} \cup \{-\frac{2}{7}+\mathbb{Z}_{\geq 0}\} \cup \{-\frac{1}{7}+\mathbb{Z}_{\geq 0}\} \\&\cup \{-\frac{7}{8}+\mathbb{Z}_{\geq 0}\} \cup \{-\frac{5}{8}+\mathbb{Z}_{\geq 0}\} \cup \{-\frac{3}{8}+\mathbb{Z}_{\geq 0}\} \cup \{-\frac{1}{8}+\mathbb{Z}_{\geq 0}\} \\&\cup \{-\frac{8}{9}+\mathbb{Z}_{\geq 0}\} \cup \{-\frac{7}{9}+\mathbb{Z}_{\geq 0}\} \cup \{-\frac{5}{9}+\mathbb{Z}_{\geq 0}\} \cup \{-\frac{4}{9}+\mathbb{Z}_{\geq 0}\} \cup \{-\frac{2}{9}+\mathbb{Z}_{\geq 0}\} \cup \{-\frac{1}{9}+\mathbb{Z}_{\geq 0}\} \\&\cup \{-\frac{9}{10}+\mathbb{Z}_{\geq 0}\} \cup \{-\frac{7}{10}+\mathbb{Z}_{\geq 0}\} \cup \{-\frac{3}{10}+\mathbb{Z}_{\geq 0}\} \cup \{-\frac{1}{10}+\mathbb{Z}_{\geq 0}\}\\&=\bigcup\limits_{i=6}^{10}\{-1+\frac{1}{i}\mathbb{Z}_{\geq 0}\}.
		\end{align*}

	\end{enumerate}
	 The computations for $E_6$ are tedious, so we only present the detailed procedure for the first case of $E_6$, and omit subsequent derivations for brevity.

\end{proof}

\begin{Thm}\label{reducible_e7}
	Let $\fg$ is of type $E_7$, then $M_I(z\widehat{\xi_p})$ is reducible if and only if
	
	\begin{enumerate}
		\item  $p=1$, then $z\in\bigcup\limits_{i=8}^{15}\{-1+\frac{1}{i}\mathbb{Z}_{\geq 0}\} $; or
		\item  $p=2$, then $z \in \bigcup\limits_{i=8}^{12}\{-1+\frac{1}{i}\mathbb{Z}_{\geq 0}\}\cup\bigcup\limits_{i=13}^{15}\{-\frac{i+1}{i}+\frac{1}{i}\mathbb{Z}_{\geq 0}\}$; or
		\item  $p=3$, then $z\in \bigcup\limits_{i=8}^{13}\{-\frac{i+1}{i}+\frac{1}{i}\mathbb{Z}_{\geq 0}\}\cup\{-\frac{8}{7}+\frac{1}{14}\mathbb{Z}_{\geq 0}\}$; or
		\item  $p=4$, then $z\in \bigcup\limits_{i=7}^{8}\{-\frac{i+1}{i}+\frac{1}{i}\mathbb{Z}_{\geq 0}\}\cup\bigcup\limits_{i=9}^{11}\{-\frac{i+2}{i}+\frac{1}{i}\mathbb{Z}_{\geq 0}\}\cup\bigcup\limits_{i=12}^{13}\{-\frac{i+3}{i}+\frac{1}{i}\mathbb{Z}_{\geq 0}\}$; or
		\item  $p=5$, then $z\in \bigcup\limits_{i=8}^{11}\{-\frac{i+1}{i}+\frac{1}{i}\mathbb{Z}_{\geq 0}\}\cup\bigcup\limits_{i=12}^{14}\{-\frac{i+2                               }{i}+\frac{1}{i}\mathbb{Z}_{\geq 0}\}$; or
		\item  $p=6$, then $z\in \bigcup\limits_{i=8}^{10}\{-1+\frac{1}{i}\mathbb{Z}_{\geq 0}\}\cup\bigcup\limits_{i=11}^{15}\{-\frac{i+1}{i}+\frac{1}{i}\mathbb{Z}_{\geq 0}\}$; or
		\item  $p=7$, then $z\in \bigcup\limits_{i=8}^{16}\{-1+\frac{1}{i}\mathbb{Z}_{\geq 0}\}$.
	\end{enumerate}
\end{Thm}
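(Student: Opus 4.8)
The plan is to follow verbatim the strategy already used in the proofs of Theorems \ref{reducible_g2} and \ref{reducible_f4}, now carried out for each of the seven minimal parabolics $I=\{\alpha_p\}$ of $E_7$. By Lemma \ref{reducible}, $M_I(z\widehat{\xi_p})$ is reducible precisely when $\operatorname{GKdim} L(z\widehat{\xi_p})<\dim(\fu)=|\Delta^+|-1=62$, and via the identity $\operatorname{GKdim} L(\lambda)=|\Delta^+|-{\bf a}(w_\lambda)$ (from \cite{BX1}, valid in exceptional type) this is equivalent to ${\bf a}(w_\lambda)\geq 2$, whereas ${\bf a}(w_\lambda)=1$ gives irreducibility. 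Thus the entire problem reduces to computing the Lusztig ${\bf a}$-value at each diagonal weight $\lambda=z\widehat{\xi_p}=z(\rho-\xi_p)$. For each fixed $p$ I would write out $\xi_p$ and $\lambda+\rho$ in the coordinates of \cite{Knapp}, exactly as in Steps~1--4 of the $F_4$ proof.

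For each fixed $p$ the computation is organized by the arithmetic type of $z$. Since $\widehat{\xi_p}$ has bounded denominators, only finitely many residue classes yield a non-generic integral root system, so I would run through $z$ being integral, half-integral, one-third-integral, and so on up to one-sixteenth-integral, together with the generic case $\Delta_{[\lambda]}\simeq A_1$ (always irreducible). In each subcase I would apply Proposition \ref{integral_root_system} to identify $\Delta_{[\lambda]}$ from $|\Delta_{[\lambda]}^+|$ and the root-length data, fix an explicit isomorphism $\phi\colon\Delta_{[\lambda]}\to\phi(\Delta_{[\lambda]})$ onto a product of classical and exceptional root systems, and compute $\phi(\lambda+\rho)$. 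Then ${\bf a}(w_\lambda)={\bf a}(w_{\phi(\lambda)})$ is read off from the RS algorithm on classical factors and from PyCox on any exceptional factor, the values being combined additively over the orthogonal components by Proposition \ref{a_function}(7).

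Once the smallest reducible $z$ in each residue class is located, Lemma \ref{GKdown} supplies the rest: reducibility propagates under $z\mapsto z+1$, so each class contributes a ray $a+\mathbb{Z}_{\geq 0}$, and collecting the rays for a fixed denominator $i$ produces a progression $a+\tfrac1i\mathbb{Z}_{\geq 0}$. I would then merge these progressions and simplify the union into the closed forms stated in (1)--(7); the bookkeeping fact that keeps the answer compact is that the coarser progressions (including the integral ray $\{-1+\mathbb{Z}_{\geq 0}\}$) are absorbed into the finer ones, exactly as in $F_4$ where $\bigcup_{i=5}^{9}\{-1+\tfrac1i\mathbb{Z}_{\geq 0}\}$ already contains the integral points.

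The main obstacle is the scale and delicacy of the case analysis rather than any new idea. Type $E_7$ has $63$ positive roots, the relevant denominators run all the way to $16$, and for several residue classes the integral subsystem is itself of exceptional type (e.g.\ $E_6$) or a large classical system such as $A_7$ or $D_6$, forcing the ${\bf a}$-values to be obtained from PyCox rather than from a quick RS computation. Correctly pinning down $\Pi_{[\lambda]}$ through Proposition \ref{integral_root_system} in these high-rank cases, tracking which component carries the short-root label ``$\,\widetilde{\phantom{x}}\,$'', and then reorganizing the resulting list of individual reducible points into the stated unions $\bigcup_i\{-\tfrac{i+1}{i}+\tfrac1i\mathbb{Z}_{\geq 0}\}$ --- with the exotic shifts $-\tfrac{i+2}{i}$ and $-\tfrac{i+3}{i}$ that surface for $p=4,5$, and the isolated class $\{-\tfrac{8}{7}+\tfrac{1}{14}\mathbb{Z}_{\geq 0}\}$ for $p=3$ --- is where essentially all the effort lies.
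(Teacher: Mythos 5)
Your proposal is correct and coincides with the paper's own treatment: the paper in fact omits the $E_7$ computations entirely, stating only that ``the same methodology has employed in the previous cases,'' i.e.\ exactly the scheme you describe --- reduction to the $\mathbf{a}$-value via Lemma \ref{reducible} and the identity $\operatorname{GKdim}L(\lambda)=|\Delta^+|-\mathbf{a}(w_\lambda)$, residue-class analysis of $z$ with $\Delta_{[\lambda]}$ identified through Proposition \ref{integral_root_system}, $\mathbf{a}$-values computed by the RS algorithm and PyCox and added over orthogonal components by Proposition \ref{a_function}(7), and propagation of reducibility by Lemma \ref{GKdown}, as carried out explicitly for $G_2$, $F_4$ and the $p=1$ case of $E_6$. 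Your plan is therefore essentially the paper's proof, merely written out rather than omitted.
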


\begin{proof}
		 The computations for $E_7$ are tedious and the same methodology has employed in the previous cases, so we omitted detailed procedures here. 
\end{proof}

\begin{Thm}\label{reducible_e8}
	Let $\fg$ is of type $E_8$, then $M_I(z\widehat{\xi_p})$ is reducible if and only if
%	\hline
%	$p$ & reducible points of $M_I(z\widehat{\xi_p})$  \\ 
%	\hline 
%	1&$$\\ \hline
%	%makecell[c]可以在表格行内换行
%	2&$$\\ \hline
%	3&$$ \\ \hline
%	4& $$\\ \hline
%	5& $$\\ \hline
%	6&  $$\\ \hline
%	7& $$\\ \hline
%	8& $$\\ \hline
	\begin{enumerate}
		\item  $p=1$, then $z\in \bigcup\limits_{i=14}^{20}\{-1+\frac{1}{i}\mathbb{Z}_{\geq 0}\}\cup\bigcup\limits_{i=21}^{27}\{-\frac{i+1}{i}+\frac{1}{i}\mathbb{Z}_{\geq 0}\}$; or
		\item  $p=2$, then $z\in 
		\bigcup\limits_{i=14}^{21}\{-\frac{i+1}{i}+\frac{1}{i}\mathbb{Z}_{\geq 0}\}\cup\bigcup\limits_{i=22}^{26}\{-\frac{i+2}{i}+\frac{1}{i}\mathbb{Z}_{\geq 0}\}$; or
		\item  $p=3$, then $z\in 
		\{-\frac{14}{13}+\frac{1}{13}\mathbb{Z}_{\geq 0}\}\cup\bigcup\limits_{i=14}^{20}\{-\frac{i+2}{i}+\frac{1}{i}\mathbb{Z}_{\geq 0}\}\cup\bigcup\limits_{i=21}^{25}\{-\frac{i+3}{i}+\frac{1}{i}\mathbb{Z}_{\geq 0}\}$; or
		\item  $p=4$, then $z\in 
		\{-\frac{7}{6}+\frac{1}{12}\mathbb{Z}_{\geq 0}\}\cup\bigcup\limits_{i=13}^{15}\{-\frac{i+3}{i}+\frac{1}{i}\mathbb{Z}_{\geq 0}\}\cup\bigcup\limits_{i=16}^{20}\{-\frac{i+4}{i}+\frac{1}{i}\mathbb{Z}_{\geq 0}\}\cup\bigcup\limits_{i=21}^{23}\{-\frac{i+5}{i}+\frac{1}{i}\mathbb{Z}_{\geq 0}\}$; or
		\item  $p=5$, then $z\in 
		\bigcup\limits_{i=13}^{16}\{-\frac{i+2}{i}+\frac{1}{i}\mathbb{Z}_{\geq 0}\}\cup\bigcup\limits_{i=17}^{21}\{-\frac{i+3}{i}+\frac{1}{i}\mathbb{Z}_{\geq 0}\}\cup\bigcup\limits_{i=22}^{24}\{-\frac{i+4}{i}+\frac{1}{i}\mathbb{Z}_{\geq 0}\}$; or
		\item  $p=6$, then $z\in 
		\bigcup\limits_{i=13}^{16}\{-\frac{i+1}{i}+\frac{1}{i}\mathbb{Z}_{\geq 0}\}\cup\bigcup\limits_{i=17}^{23}\{-\frac{i+2}{i}+\frac{1}{i}\mathbb{Z}_{\geq 0}\}\cup\bigcup\limits_{i=24}^{25}\{-\frac{i+3}{i}+\frac{1}{i}\mathbb{Z}_{\geq 0}\}$; or
		
		\item  $p=7$, then $z\in \bigcup\limits_{i=13}^{26}\{-\frac{i+1}{i}+\frac{1}{i}\mathbb{Z}_{\geq 0}\}$; or
		
		\item  $p=8$, then $z\in \bigcup\limits_{i=14}^{27}\{-1+\frac{1}{i}\mathbb{Z}_{\geq 0}\}$.
	\end{enumerate}
\end{Thm}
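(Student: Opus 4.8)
The proof follows the same scheme already executed for $G_2$, $F_4$ and $E_6$, now carried out at each node $p\in\{1,\dots,8\}$ of the $E_8$ diagram. Since $|\Delta^+|=120$ and $\dim(\mathfrak{u})=119$, Lemma \ref{reducible} together with the identity $\operatorname{GKdim}L(\lambda)=|\Delta^+|-\mathbf{a}(w_\lambda)$ reduces the entire problem to the single arithmetic test
\[
M_I(z\widehat{\xi_p})\text{ is reducible}\iff \mathbf{a}(w_\lambda)\geq 2 .
\]
First I would fix Knapp's coordinates and write $\lambda+\rho=z(\rho-\xi_p)+\rho$ as an explicit vector in $\mathfrak{h}^*$ depending linearly on $z$ (using $\widehat{\xi_p}=\rho-\xi_p$ from Lemma \ref{fundamental_weight}). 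The nonintegral entries are controlled entirely by the single coefficient attached to $\xi_p$, so the denominator of $z$ dictates which roots $\alpha$ satisfy $(\lambda+\rho,\alpha^{\vee})\in\mathbb{Z}$, hence which roots lie in $\Delta_{[\lambda]}$.

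Next, for each $p$ I would stratify the line $z\in\mathbb{R}$ by the ``$k$-integrality'' of $z$, exactly as in the $F_4$ and $E_6$ arguments. For each denominator $k$ the integral root system $\Delta_{[\lambda]}$ is fixed; I would use Proposition \ref{integral_root_system} to read off its decomposition into orthogonal irreducible components and to pin down each simple system (attaching the label $\widetilde{\phantom{x}}$ to components generated by short roots), producing an explicit isomorphism $\phi\colon\Delta_{[\lambda]}\to\phi(\Delta_{[\lambda]})$. Since $\mathbf{a}(w_\lambda)=\mathbf{a}(\phi(w_\lambda))$ and the $\mathbf{a}$-function is additive over direct products by Proposition \ref{a_function}(7), the computation splits across components: classical factors are handled by transporting $\phi(\lambda+\rho)$ to a dominant representative and running the RS insertion algorithm of \S\ref{R-S}, while exceptional factors (which for $E_8$ genuinely include large subsystems, e.g.\ the full $E_8$ in the integral case and subsystems of type $D_n$, $E_6$, $E_7$) are fed to PyCox via the positive-index reduction \cite[Lem.\ 3.2]{BGXW}.

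For each pair $(p,k)$ this yields a finite list of residues $z\bmod k$ on which $\mathbf{a}(w_\lambda)\geq 2$; the boundary cases where $\lambda+\rho$ is dominant (giving $\mathbf{a}=|\Delta^+|$) or anti-dominant (giving $\mathbf{a}=0$) are governed by Proposition \ref{integral_case}. By the monotonicity Lemma \ref{GKdown}, the full reducible set in each class is obtained by adjoining $\mathbb{Z}_{\geq 0}$ to the smallest reducible $z$ in that class. The final step is bookkeeping: assembling these residues across all relevant $k$ and verifying that the resulting union collapses to the closed forms displayed in the statement (for instance $\bigcup_{i=14}^{20}\{-1+\tfrac1i\mathbb{Z}_{\geq 0}\}\cup\bigcup_{i=21}^{27}\{-\tfrac{i+1}{i}+\tfrac1i\mathbb{Z}_{\geq 0}\}$ for $p=1$).

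I expect the main obstacle to be twofold. First, one must determine the correct range of denominators $k$ for which $\mathbf{a}(w_\lambda)\geq 2$ can occur: for $E_8$ these reach as high as $27$, far beyond the classical cases, and one has to argue that for every larger denominator $\Delta_{[\lambda]}$ degenerates to a rank-one system, forcing $\mathbf{a}=1$ and irreducibility. Second, and more seriously, the exceptional integral subsystems of rank $7$ and $8$ require genuine PyCox computations of $\mathbf{a}$-values for long Weyl-group elements; correctly identifying each isomorphism type from $|\Delta_{[\lambda]}^+|$ together with the short/long root counts (Proposition \ref{integral_root_system}), and then confirming that the residue-by-residue data coincide with the tabulated closed-form unions, is where essentially all of the work — and all of the risk of error — lies.
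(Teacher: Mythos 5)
Your proposal matches the paper's approach: the paper's own ``proof'' of the $E_8$ theorem consists precisely of invoking the methodology carried out in detail for $G_2$, $F_4$ and $E_6$ (stratification by the denominator of $z$, identification of $\Delta_{[\lambda]}$ via Proposition \ref{integral_root_system}, transport by $\phi$, additivity of $\mathbf{a}$ from Proposition \ref{a_function}, RS insertion for classical factors and PyCox for exceptional ones, with Lemmas \ref{reducible} and \ref{GKdown} converting the $\mathbf{a}$-values into the residue-class description), and your reduction to the test $\mathbf{a}(w_\lambda)\geq 2$ is the correct specialization since $(\lambda+\rho,\alpha_p^{\vee})=1$ rules out $\mathbf{a}(w_\lambda)=0$. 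The computational details you flag as the real work are exactly what the paper declares tedious and omits, so there is nothing further to reconcile.
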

\begin{proof}
	The computations for $E_8$ are tedious and the same methodology has employed in the previous cases. To avoid lengthy articles, we omit detailed procedures here. 
\end{proof}

\section{Appendix}\label{app}
In the following tables, $p$ means the case of $\lambda=z\widehat{\xi_p}$, and we give the sets of corresponding reducible points in the second column of the tables.

\makeatletter\def\@captype{table}\makeatother
\centering
\renewcommand{\arraystretch}{1.4}
\setlength\tabcolsep{5pt}
\caption{Diagonal-reducible points of $G_2$}
\label{constants}
{	\begin{tabular}{|c|c|}			
		\hline
		$p$ & diagonal-reducible points of $M_I(z\widehat{\xi_p})$  \\ 
		\hline 
		1& $-\frac{1}{2}+\frac{1}{2}\mathbb{Z}_{\geq 0},-\frac{2}{3}+\frac{1}{3}\mathbb{Z}_{\geq 0}$ \\ \hline
		%makecell[c]可以在表格行内换行
		2& $-\frac{3}{2}+\mathbb{Z}_{\geq 0},\mathbb{Z}_{\geq 0}$ \\ \hline
		
\end{tabular}}
%\centering
%\renewcommand{\arraystretch}{1.4}
%\setlength\tabcolsep{5pt}
%\caption{Reducible points of $G_2$}
%\begin{longtable}{|c|c|}
%	\hline
%	$p$ & reducible points of $M_I(z\widehat{\xi_p})$  \\ 
%	\endhead	
%	\hline
%	$p$ & reducible points of $M_I(z\widehat{\xi_p})$  \\ 
%	\endfirsthead
%	\endfoot
%	\endlastfoot
%		\hline 
%		1& $-\frac{1}{2}+\frac{1}{2}\mathbb{Z}_{\geq 0},-\frac{2}{3}+\frac{1}{3}\mathbb{Z}_{\geq 0}$ \\ \hline
%		%makecell[c]可以在表格行内换行
%		2& $-\frac{3}{2}+\mathbb{Z}_{\geq 0},\mathbb{Z}_{\geq 0}$ \\ \hline
%\end{longtable}

%
%\makeatletter\def\@captype{table}\makeatother
%\centering
%\renewcommand{\arraystretch}{1.4}
%\setlength\tabcolsep{5pt}
%\caption{Reducible points of $F_4$}
%\label{constants}
%{	\begin{tabular}{|c|c|}			
%		\hline
%		$p$ & reducible points of $M_I(z\widehat{\xi_p})$  \\ 
%		\hline 
%		1& $\bigcup\limits_{i=5}^{9}\{-1+\frac{1}{i}\mathbb{Z}_{\geq 0}\}$ \\ \hline
%%		makecell[c]可以在表格行内换行
%		2& $\bigcup\limits_{i=5}^{8}\{-\frac{i+1}{i}+\frac{1}{i}\mathbb{Z}_{\geq 0}\}$ \\ \hline
%		3&$\bigcup\limits_{i=4}^{5}\{-\frac{i+1}{i}+\frac{1}{i}\mathbb{Z}_{\geq 0}\},\bigcup\limits_{i=6}^{7}\{-\frac{i+3}{i}+\frac{1}{i}\mathbb{Z}_{\geq 0}\}$ \\ \hline
%		4& $\bigcup\limits_{i=5}^{6}\{-1+\frac{1}{i}\mathbb{Z}_{\geq 0}\},\bigcup\limits_{i=7}^{9}\{-\frac{i+1}{i}+\frac{1}{i}\mathbb{Z}_{\geq 0}\}$\\ \hline
%\end{tabular}}

\centering
\renewcommand{\arraystretch}{1.4}
\setlength\tabcolsep{5pt}
\caption{Diagonal-reducible points of $F_4$}
\begin{longtable*}{|c|c|}
	\hline
	$p$ & diagonal-reducible points of $M_I(z\widehat{\xi_p})$  \\ 
	\endfirsthead
\endfoot
\endlastfoot
\hline
		1& $\bigcup\limits_{i=5}^{9}\{-1+\frac{1}{i}\mathbb{Z}_{\geq 0}\}$ \\ \hline
%		makecell[c]可以在表格行内换行
		2& $\bigcup\limits_{i=5}^{8}\{-\frac{i+1}{i}+\frac{1}{i}\mathbb{Z}_{\geq 0}\}$ \\ \hline
		3&$\bigcup\limits_{i=4}^{5}\{-\frac{i+1}{i}+\frac{1}{i}\mathbb{Z}_{\geq 0}\},\bigcup\limits_{i=6}^{7}\{-\frac{i+3}{i}+\frac{1}{i}\mathbb{Z}_{\geq 0}\}$ \\ \hline
		4& $\bigcup\limits_{i=5}^{6}\{-1+\frac{1}{i}\mathbb{Z}_{\geq 0}\},\bigcup\limits_{i=7}^{9}\{-\frac{i+1}{i}+\frac{1}{i}\mathbb{Z}_{\geq 0}\}$\\ \hline
\end{longtable*}
%\makeatletter\def\@captype{table}\makeatother
%\centering
%\renewcommand{\arraystretch}{1.4}
%\setlength\tabcolsep{5pt}
%\caption{Reducible points of $E_6$}
%\label{constants}
%{	\begin{tabular}{|c|c|}			
%		\hline
%		$p$ & reducible points of $M_I(z\widehat{\xi_p})$  \\ 
%		\hline 
%		1& $\bigcup\limits_{i=6}^{10}\{-1+\frac{1}{i}\mathbb{Z}_{\geq 0}\}$ \\ \hline
%		%makecell[c]可以在表格行内换行
%		2& $\bigcup\limits_{i=5}^{9}\{-1+\frac{1}{i}\mathbb{Z}_{\geq 0}\}$ \\ \hline
%		3&$\bigcup\limits_{i=5}^{8}\{-1+\frac{1}{i}\mathbb{Z}_{\geq 0}\},-\frac{10}{9}+\frac{1}{9}\mathbb{Z}_{\geq 0}$ \\ \hline
%		4& $\bigcup\limits_{i=5}^{7}\{-1+\frac{1}{i}\mathbb{Z}_{\geq 0}\},-\frac{5}{4}+\frac{1}{8}\mathbb{Z}_{\geq 0}$\\ \hline
%		5& $\bigcup\limits_{i=5}^{8}\{-1+\frac{1}{i}\mathbb{Z}_{\geq 0}\},-\frac{10}{9}+\frac{1}{9}\mathbb{Z}_{\geq 0}$ \\ \hline
%		6&  $\bigcup\limits_{i=6}^{10}\{-1+\frac{1}{i}\mathbb{Z}_{\geq 0}\}$ \\ \hline
%		
%\end{tabular}}
%\newpage
\centering
\renewcommand{\arraystretch}{1.4}
\setlength\tabcolsep{5pt}
\caption{Diagonal-reducible points of $E_6$}
\begin{longtable*}{|c|c|}
	
	\hline
	$p$ & diagonal-reducible points of $M_I(z\widehat{\xi_p})$  \\ 
	\endfirsthead
	\endfoot
	\endlastfoot

\hline 
1& $\bigcup\limits_{i=6}^{10}\{-1+\frac{1}{i}\mathbb{Z}_{\geq 0}\}$ \\ \hline
%makecell[c]可以在表格行内换行
2& $\bigcup\limits_{i=5}^{9}\{-1+\frac{1}{i}\mathbb{Z}_{\geq 0}\}$ \\ \hline
3&$\bigcup\limits_{i=5}^{7}\{-1+\frac{1}{i}\mathbb{Z}_{\geq 0}\},\bigcup\limits_{i=8}^{9}\{-\frac{i+1}{i}+\frac{1}{i}\mathbb{Z}_{\geq 0}\}$ \\ \hline
4& $\bigcup\limits_{i=4}^{8}\{-\frac{i+1}{i}+\frac{1}{i}\mathbb{Z}_{\geq 0}\}$\\ \hline
5& $\bigcup\limits_{i=5}^{7}\{-1+\frac{1}{i}\mathbb{Z}_{\geq 0}\},\bigcup\limits_{i=8}^{9}\{-\frac{i+1}{i}+\frac{1}{i}\mathbb{Z}_{\geq 0}\}$ \\ \hline
6&  $\bigcup\limits_{i=6}^{10}\{-1+\frac{1}{i}\mathbb{Z}_{\geq 0}\}$ \\ \hline
	
\end{longtable*}

%\makeatletter\def\@captype{table}\makeatother
%\centering
%\renewcommand{\arraystretch}{1.4}
%\setlength\tabcolsep{5pt}
%\caption{Reducible points of $E_7$}
%\label{constants}
%{	\begin{tabular}{|c|c|}			
%		\hline
%		$p$ & reducible points of $M_I(z\widehat{\xi_p})$  \\ 
%		\hline 
%		1& $\bigcup\limits_{i=8}^{15}\{-1+\frac{1}{i}\mathbb{Z}_{\geq 0}\}$ \\ \hline
%		%makecell[c]可以在表格行内换行
%		2& $\bigcup\limits_{i=8}^{12}\{-1+\frac{1}{i}\mathbb{Z}_{\geq 0}\},\bigcup\limits_{i=13}^{15}\{-\frac{i+1}{i}+\frac{1}{i}\mathbb{Z}_{\geq 0}\}$ \\ \hline
%		3&$\bigcup\limits_{i=8}^{13}\{-\frac{i+1}{i}+\frac{1}{i}\mathbb{Z}_{\geq 0}\},-\frac{8}{7}+\frac{1}{14}\mathbb{Z}_{\geq 0}$ \\ \hline
%		4& $\bigcup\limits_{i=7}^{9}\{-\frac{i+1}{i}+\frac{1}{i}\mathbb{Z}_{\geq 0}\},\bigcup\limits_{i=10}^{11}\{-\frac{i+2}{i}+\frac{1}{i}\mathbb{Z}_{\geq 0}\},\bigcup\limits_{i=12}^{13}\{-\frac{i+3}{i}+\frac{1}{i}\mathbb{Z}_{\geq 0}\}$\\ \hline
%		5& $\bigcup\limits_{i=8}^{11}\{-\frac{i+1}{i}+\frac{1}{i}\mathbb{Z}_{\geq 0}\},\bigcup\limits_{i=12}^{14}\{-\frac{i+2                               }{i}+\frac{1}{i}\mathbb{Z}_{\geq 0}\}$\\ \hline
%		6&  $\bigcup\limits_{i=8}^{10}\{-1+\frac{1}{i}\mathbb{Z}_{\geq 0}\},\bigcup\limits_{i=11}^{15}\{-\frac{i+1}{i}+\frac{1}{i}\mathbb{Z}_{\geq 0}\}$\\ \hline
%		7& $\bigcup\limits_{i=8}^{16}\{-1+\frac{1}{i}\mathbb{Z}_{\geq 0}\}$\\ \hline
%		
%\end{tabular}}
\centering
\renewcommand{\arraystretch}{1.4}
\setlength\tabcolsep{5pt}
\caption{Diagonal-reducible points of $E_7$}
\begin{longtable*}{|c|c|}
	\hline
$p$ & diagonal-reducible points of $M_I(z\widehat{\xi_p})$  \\ 
	\endfirsthead
	\endfoot
	\endlastfoot
\hline 
1& $\bigcup\limits_{i=8}^{15}\{-1+\frac{1}{i}\mathbb{Z}_{\geq 0}\}$ \\ \hline
%makecell[c]可以在表格行内换行
2& $\bigcup\limits_{i=8}^{12}\{-1+\frac{1}{i}\mathbb{Z}_{\geq 0}\},\bigcup\limits_{i=13}^{15}\{-\frac{i+1}{i}+\frac{1}{i}\mathbb{Z}_{\geq 0}\}$ \\ \hline
3&$\bigcup\limits_{i=8}^{13}\{-\frac{i+1}{i}+\frac{1}{i}\mathbb{Z}_{\geq 0}\},-\frac{8}{7}+\frac{1}{14}\mathbb{Z}_{\geq 0}$ \\ \hline
4& $\bigcup\limits_{i=7}^{8}\{-\frac{i+1}{i}+\frac{1}{i}\mathbb{Z}_{\geq 0}\},\bigcup\limits_{i=9}^{11}\{-\frac{i+2}{i}+\frac{1}{i}\mathbb{Z}_{\geq 0}\},\bigcup\limits_{i=12}^{13}\{-\frac{i+3}{i}+\frac{1}{i}\mathbb{Z}_{\geq 0}\}$\\ \hline
5& $\bigcup\limits_{i=8}^{11}\{-\frac{i+1}{i}+\frac{1}{i}\mathbb{Z}_{\geq 0}\},\bigcup\limits_{i=12}^{14}\{-\frac{i+2                               }{i}+\frac{1}{i}\mathbb{Z}_{\geq 0}\}$\\ \hline
6&  $\bigcup\limits_{i=8}^{10}\{-1+\frac{1}{i}\mathbb{Z}_{\geq 0}\},\bigcup\limits_{i=11}^{15}\{-\frac{i+1}{i}+\frac{1}{i}\mathbb{Z}_{\geq 0}\}$\\ \hline
7& $\bigcup\limits_{i=8}^{16}\{-1+\frac{1}{i}\mathbb{Z}_{\geq 0}\}$\\ \hline
	
\end{longtable*}
\centering
\renewcommand{\arraystretch}{1.4}
\setlength\tabcolsep{5pt}
\caption{Diagonal-reducible points of $E_8$}
\begin{longtable*}{|c|c|}
	\hline
$p$ & diagonal-reducible points of $M_I(z\widehat{\xi_p})$  \\ 
	\endfirsthead
	\endfoot
	\endlastfoot
	\hline 
	1&$\bigcup\limits_{i=14}^{20}\{-1+\frac{1}{i}\mathbb{Z}_{\geq 0}\},\bigcup\limits_{i=21}^{27}\{-\frac{i+1}{i}+\frac{1}{i}\mathbb{Z}_{\geq 0}\}$\\ \hline
	%makecell[c]可以在表格行内换行
	2&$\bigcup\limits_{i=14}^{21}\{-\frac{i+1}{i}+\frac{1}{i}\mathbb{Z}_{\geq 0}\},\bigcup\limits_{i=22}^{26}\{-\frac{i+2}{i}+\frac{1}{i}\mathbb{Z}_{\geq 0}\}$\\ \hline
	3&$-\frac{14}{13}+\frac{1}{13}\mathbb{Z}_{\geq 0},\bigcup\limits_{i=14}^{20}\{-\frac{i+2}{i}+\frac{1}{i}\mathbb{Z}_{\geq 0}\},\bigcup\limits_{i=21}^{25}\{-\frac{i+3}{i}+\frac{1}{i}\mathbb{Z}_{\geq 0}\}$ \\ \hline
	4& $-\frac{7}{6}+\frac{1}{12}\mathbb{Z}_{\geq 0},\bigcup\limits_{i=13}^{15}\{-\frac{i+3}{i}+\frac{1}{i}\mathbb{Z}_{\geq 0}\},\bigcup\limits_{i=16}^{20}\{-\frac{i+4}{i}+\frac{1}{i}\mathbb{Z}_{\geq 0}\},\bigcup\limits_{i=21}^{23}\{-\frac{i+5}{i}+\frac{1}{i}\mathbb{Z}_{\geq 0}\}$\\ \hline
	5& $\bigcup\limits_{i=13}^{16}\{-\frac{i+2}{i}+\frac{1}{i}\mathbb{Z}_{\geq 0}\},\bigcup\limits_{i=17}^{21}\{-\frac{i+3}{i}+\frac{1}{i}\mathbb{Z}_{\geq 0}\},\bigcup\limits_{i=22}^{24}\{-\frac{i+4}{i}+\frac{1}{i}\mathbb{Z}_{\geq 0}\}$\\ \hline
	6&  $\bigcup\limits_{i=13}^{16}\{-\frac{i+1}{i}+\frac{1}{i}\mathbb{Z}_{\geq 0}\},\bigcup\limits_{i=17}^{23}\{-\frac{i+2}{i}+\frac{1}{i}\mathbb{Z}_{\geq 0}\},\bigcup\limits_{i=24}^{25}\{-\frac{i+3}{i}+\frac{1}{i}\mathbb{Z}_{\geq 0}\}$\\ \hline
	7& $\bigcup\limits_{i=13}^{26}\{-\frac{i+1}{i}+\frac{1}{i}\mathbb{Z}_{\geq 0}\}$\\ \hline
	8& $\bigcup\limits_{i=14}^{27}\{-1+\frac{1}{i}\mathbb{Z}_{\geq 0}\}$\\ \hline

\end{longtable*}

% 	\subsection*{Acknowledgments}
	
% We would like to thank the anonymous referees for providing many	constructive comments and helping in improving the contents of our paper.

%
%
%	\subsection*{Acknowledgments}
%	
%	Z. Bai was supported in part by the National Natural Science Foundation of 
%	China (No. 12171344) and the National Key $\textrm{R}\,\&\,\textrm{D}$ Program of China (No. 2018YFA0701700 and No. 2018YFA0701701).
%	

%\printbibliography
%
%\bibliography{reference1}
\bibliography{newestref}\bibliographystyle{alpha} 

@ARTICLE{BXX,
  author = {Bai, Z. Q. and Xiao, W. and Xie, X.},
  title = {Gelfand--{K}irillov dimensions and associated varieties of highest weight
	modules},
  journal = {Int. Math. Res. Not. IMRN},
  year = {2023},
 volume = {no. 10},
  pages = {8101-8142},

}

@Article{BX1,
  author    = {Bai, Z. Q. and Xie, X.},
  title     = {Gelfand--{K}irillov dimensions of highest weight {H}arish-{C}handra modules for {$SU(p,q)$}},
  journal   = {Int. Math. Res. Not. IMRN},
  year      = {2019},
 volume = {no. 14},
  pages     = {4392--4418},
  owner     = {86156},
  timestamp = {2019.11.27},
}

@Article{BGXW,
  author    = {Bai, Z. Q. and Gao, F. and Wang, Y. T. and Xie, X.},
  title     = {Gelfand--{K}irillov dimensions and annihilator varieties of highest weight modules of exceptional type {L}ie algebras},
  year = {2025},
  journal   = {arXiv:2509.24346},
}

@BOOK{Ca85,
  title = {Finite groups of {L}ie type},
  publisher = {John Wiley \& Sons, Inc., New York},
  year = {1985},
  author = {Carter, R. W.},
  pages = {xii+544},
  series = {Pure and Applied Mathematics (New York)},
  note = {Conjugacy classes and complex characters, A Wiley-Interscience Publication},
  mrclass = {20G40 (20-02 20C15)},
  mrnumber = {794307},
  mrreviewer = {David B. Surowski}
}

@INCOLLECTION{EHW,
  author = {Enright, T. J. and Howe, R. and Wallach, N.},
  title = {A classification of unitary highest weight modules},
  booktitle = {Representation theory of reductive groups ({P}ark {C}ity, {U}tah,
	1982)},
  publisher = {Birkh\"auser Boston, Boston, MA},
  year = {1983},
  volume = {40},
  series = {Progr. Math.},
  pages = {97--143},
  mrclass = {22E46},
  mrnumber = {733809},
  mrreviewer = {M. Flensted-Jensen}
}

@article {HH,
	AUTHOR = {He, Haian},
	TITLE = {On the reducibility of scalar generalized {V}erma modules of
	abelian type},
	JOURNAL = {Algebr. Represent. Theory},
	FJOURNAL = {Algebras and Representation Theory},
	VOLUME = {19},
	YEAR = {2016},
	NUMBER = {1},
	PAGES = {147--170},
	ISSN = {1386-923X,1572-9079},
	MRCLASS = {17B20 (17B10)},
	MRNUMBER = {3465895},
	MRREVIEWER = {Eric\ N.\ Sommers},
	DOI = {10.1007/s10468-015-9567-2},
	URL = {https://doi.org/10.1007/s10468-015-9567-2},
}

@BOOK{Hum78,
  title = {Introduction to {L}ie algebras and representation theory},
  publisher = {Springer-Verlag, New York-Berlin},
  year = {1978},
  author = {Humphreys, J. E.},
  volume = {9},
  pages = {xii+171},
  series = {Graduate Texts in Mathematics},
  note = {Second printing, revised},
  mrclass = {17Bxx},
  mrnumber = {499562},
  mrreviewer = {I. P. Shestakov}
}

@Article{AJ1,
  author     = {Joseph, A.},
  journal    = {J. London Math. Soc. (2)},
  title      = {Gelfand--{K}irillov dimension for the annihilators of simple quotients of {V}erma modules},
  year       = {1978},
  number     = {1},
  pages      = {50--60},
  volume     = {18},
  fjournal   = {Journal of the London Mathematical Society. Second Series},
  mrclass    = {17B35},
  mrnumber   = {0506500},
  mrreviewer = {Marie-Paule Malliavin},
}

@ARTICLE{KL,
  author = {Kazhdan, D. and Lusztig, G.},
  title = {Representations of {C}oxeter groups and {H}ecke algebras},
  journal = {Invent. Math.},
  year = {1979},
  volume = {53},
  pages = {165--184},
  number = {2},
  fjournal = {Inventiones Mathematicae},
  mrclass = {20H15 (17B35 20G05 22E47)},
  mrnumber = {560412},
  mrreviewer = {Vinay V. Deodhar}
 
}

@BOOK{Knapp,
  title = {Lie groups beyond an introduction},
  publisher = {Birkh\"auser Boston, Inc., Boston, MA},
  year = {2002},
  author = {Knapp, A. W.},
  volume = {140},
  pages = {xviii+812},
  series = {Progress in Mathematics},
  edition = {Second},
  mrclass = {22-01},
  mrnumber = {1920389}
}

@BOOK{lusztig2003hecke,
  title = {Hecke algebras with unequal parameters},
  publisher = {American Mathematical Society, Providence, RI},
  year = {2003},
  author = {Lusztig, G.},
  volume = {18},
  pages = {vi+136},
  series = {CRM Monograph Series},
  mrclass = {20C08 (20F55)},
  mrnumber = {1974442 (2004k:20011)},
  mrreviewer = {G{\"o}tz Pfeiffer},
  owner = {Administrator},
  timestamp = {2016.03.31}
}

@INCOLLECTION{lusztig1985cellsI,
  author = {Lusztig, G.},
  title = {Cells in affine {W}eyl groups},
  booktitle = {Algebraic groups and related topics ({K}yoto/{N}agoya, 1983)},
  publisher = {North-Holland, Amsterdam},
  year = {1985},
  volume = {6},
  series = {Adv. Stud. Pure Math.},
  pages = {255--287},
  mrclass = {20G15},
  mrnumber = {803338 (87h:20074)},
  mrreviewer = {Bhama Srinivasan},
  owner = {Administrator},
  timestamp = {2016.05.23}
}

@Article{BJ,
  author  = {Bai, Z. Q. and J. Jiang},
  title   = {Gelfand--{K}irillov dimensions and Reducibility of scalar genralized Verma modules for classical {L}ie algebras},
  journal = {Acta Math. Sin. (Engl. Ser.)},
  year = {2024},
  volume = {40},
  pages = {658-706},
  number = {3},
}

@Article{BX,
  author  = {Bai, Z. Q. and W. Xiao},
  title   = {Gelfand--{K}irillov dimension and reducibility of scalar genralized {V}erma modules},
  journal = {Acta Math. Sin. (Engl. Ser.)},
  year = {2019},
  volume = {35},
  pages = {1854-1860},
  number = {11},
}

@Article{VD,
  author  = {D. N. Verma},
  title   = {Structure of certain induced representations of complex semisimple {L}ie algebras},
  journal = {Bull. Amer. Math. Soc.},
  volume  = {74},
  year    = {1968},
  pages   = {160-166},
}

@Article{JL,
  author  = {J. Lepowsky},
  title   = {A generalization of the {B}ernstein--{G}elfand--{G}elfand resolution},
  journal = {J. Algebra.},
  year    = {1977},
  volume  = {49},
  number  = {2},
  pages   = {496-511},
}

@Article{AR,
  author  = {A. Rocha-Caridi},
  title   = {Splitting criteria for {$\mathfrak{G}$}-modules induced from a parabolic and a {B}ernstein--{G}elfand--{G}elfand resolution of a finite-dimensional, irreducible {$\mathfrak{G}$}-module},
  journal = {Trans. Amer. Math. Soc.},
  year    = {1980},
  volume  = {262},
  number  = {2},
  pages   = {335-366},
}

@Article{BXiao,
  author  = {Z. Q. Bai and W. Xiao},
  title   = {Reducibility of generalized {V}erma modules for {H}ermitian symmetric pairs},
  journal = {J. Pure. Appl. Algebra.},
  year    = {2021},
  volume  = {225},
  number  = {4},
  pages   = {21},
}

@Article{HKZ,
  author  = {H. He and T. Kubo and R. Zierau},
  title   = {On the reducibility of scalar generalized {V}erma modules associated to maximal parabolic subalgebras},
  journal = {Kyoto J. Math.},
  year    = {2019},
  volume  = {59},
  number  = {4},
  pages   = {787-813},
}

@PhdThesis{Ku,
  author = {Kubo, T.},
  title  = {Conformally invariant systems of differential operators associated to two-step nilpotent maximal parabolics of non-{H}eisenberg type},
  school = {Oklahoma State University},
  year   = {2012},
}

@Book{MV,
  title      = {Generalized {V}erma {M}odules},
  publisher  = {VNTL Publishers, L'viv},
  year       = {2000},
  author     = {V. Mazorchuk},
  volume     = {8},
  series     = {Mathematical Studies Monograph Series},
  isbn       = {996-7148-90-4},
  mrclass    = {17B10},
  mrnumber   = {1844621},
  mrreviewer = {Brian D. Boe},
  pages      = {182},
}

@Article{JC,
  author  = {J. C. Jantzen},
  title   = {Contra-variant forms on induced representations of semi-simple {L}ie algebras},
  journal = {Math. Ann.},
  year    = {1977},
  volume  = {226},
  number  = {1},
  pages   = {53-65},
}

@Article{MH,
  author  = {H. Matumoto},
  title   = {The homomorphisms between scalar generalized {V}erma modules associated to maximal parabolic subalgebras},
  journal = {Duke Math. J.},
  year    = {2006},
  volume  = {131},
  number  = {1},
  pages   = {75-118},
}

@Article{AG1,
  author  = {A. Gyoja},
  title   = {Further generalization of {G}eneralized {V}erma modules},
  journal = {Publ. Res. Inst. Math. Sci.},
  year    = {1993},
  volume  = {29},
  number  = {3},
  pages   = {349-395},
}

@Article{JJ,
  author  = {J. Jiang},
  title   = {Reducibility of {S}calar  {G}eneralized {V}erma {M}odules of {M}inimal {P}arabolic {T}ype},
  journal = {Algebra Colloq.},
  year = {2025},
   volume  = {32},
  number  = {4},
  pages   = {623-634},
  doi = {10.1142/S1005386725000458},
  
}

@Article{Geck,
  author  = {M. Geck},
  title   = {PyCox:computing with (finite) Coxter groups and {I}wahori-{H}ecke algebras},
  journal = {J. Comput. Math.},
  year = {2012},
  volume = {15},
  pages = {231-256},
}

\end{document}